\documentclass[a4paper,12pt]{article}
\usepackage{ucs}
\usepackage{amsfonts, amssymb, amsmath, amsthm, amscd}
\usepackage{graphicx}
\usepackage{cite}
\textheight=220mm \textwidth=150mm \topmargin=-0.5in
\oddsidemargin=0in \evensidemargin=0in

\ifx\undefined \pdfgentounicode \else
\input{glyphtounicode} \pdfgentounicode=1
\fi

\author{A.A. Vasil'eva\footnote{Faculty of mechanics and mathematics, Lomonosov Moscow State University; Moscow Center for Fundamental and Applied Mathematics.}}
\title{Kolmogorov widths of a Sobolev class with restrictions on the derivatives in different metrics}
\date{}
\begin{document}

\maketitle

\newenvironment{Biblio}{%
                  \renewcommand{\refname}{\footnotesize REFERENCES}%
                  }

\def\inff{\mathop{\smash\inf\vphantom\sup}}
\renewcommand{\le}{\leqslant}
\renewcommand{\ge}{\geqslant}
\newcommand{\sgn}{\mathrm {sgn}\,}
\newcommand{\inter}{\mathrm {int}\,}
\newcommand{\dist}{\mathrm {dist}}
\newcommand{\supp}{\mathrm {supp}\,}
\newcommand{\R}{\mathbb{R}}

\newcommand{\Z}{\mathbb{Z}}
\newcommand{\N}{\mathbb{N}}
\newcommand{\Q}{\mathbb{Q}}
\theoremstyle{plain}
\newtheorem{Trm}{Theorem}
\newtheorem{trma}{Theorem}

\newtheorem{Def}{Definition}
\newtheorem{Cor}{Corollary}
\newtheorem{Lem}{Lemma}
\newtheorem{Rem}{Remark}
\newtheorem{Sta}{Proposition}
\newtheorem{Sup}{Assumption}
\newtheorem{Supp}{Assumption}
\newtheorem{Not}{Notation}
\newtheorem{Exa}{Example}
\renewcommand{\proofname}{\bf Proof}
\renewcommand{\thetrma}{\Alph{trma}}
\renewcommand{\theSupp}{\Alph{Supp}}

\begin{abstract}
In this paper, we obtain order estimates for the Kolmogorov widths of periodic Sobolev classes with restictions on derivatives of order $r_j$ with respect to $j$th variable in metrics $L_{p_j}$ ($1\le j\le d$). 
\end{abstract}

{\bf Key words:} Kolmogorov widths, anisotropic Sobolev classes.

\section{Introduction}

In this paper, we consider the problem of the Kolmogorov widths of a periodic Sobolev class on a $d$-dimensional torus $\mathbb{T}^d$ with the following conditions on partial derivatives:
\begin{align}
\label{norm_cond1} \left\| \frac{\partial^{r_j}}{\partial x^{r_j}}f\right\|_{L_{p_j}(\mathbb{T}^d)} \le 1, \quad j=1, \, \dots, \, d. 
\end{align}
This function class is an example of an intersection of several Sobolev classes with a restriction on one of the partial derivatives \cite{galeev_emb, galeev1, galeev2}. Such function classes on $\R^d$ were studied in \cite[\S 6]{algervik}, where a sufficient condition for an embedding into a Lorentz space was obtained. The partial case $p_1=\dots = p_d$ was considered in \cite{kolyada}. Oleinik in \cite{oleinik_vl} considered anisotropic Sobolev classes on a domain $\Omega \subset \R^d$ with restrictions on $\left\| \frac{\partial^{r_j}}{\partial x^{r_j}}f\right\|_{L_{p_j}(\Omega)}$ ($j = 1, \, \dots, \, d$) and on a norm of $f$ in the space $L_{p_0}$ with a special weight. Under the assumption that
\begin{align}
\label{sum_g_0}
1-\sum \limits _{k=1}^d \frac{1}{p_kr_k}>0, \quad q\ge \max _{1\le k\le d} p_k,
\end{align}
embedding theorems into a weighted $L_q$-space and some estimates for the Kolmogorov widths were obtained (in general, these estimates were not sharp with respect to order). Here we will consider non-weighted case for periodic classes, but we do not suppose that the condition \eqref{sum_g_0} holds, and the estimates of the widths will be sharp with respect to order.

Also notice that the class of functions with conditions \eqref{norm_cond1} is an example of anisotropic Sobolev classes. In \cite{besov_iljin_nik, galeev_emb, besov_littlewood, akishev, akishev1, akishev2}, anisotropic Sobolev classes of another type were considered; they were defined by conditions on the derivarives in a mixed norm.

\begin{Def}
Let $X$ be a normed space, $M\subset X$, and let $n\in
\Z_+$. The Kolmogorov $n$-widths of the set $M$ in the space $X$ is defined by
$$
d_n(M, \, X) = \inf _{L\in {\cal L}_n(X)} \sup _{x\in M} \inf
_{y\in L} \|x-y\|;
$$
here ${\cal L}_n(X)$ is a family of all subspaces in $X$
of dimention at most $n$.
\end{Def}

The problem of estimating the widths of finite-dimensional balls and Sobolev classes was studied in \cite{pietsch1, stesin, bib_kashin, gluskin1, bib_gluskin, garn_glus, galeev2}. For details, see \cite{itogi_nt, kniga_pinkusa}.

In \cite{galeev1, galeev2, galeev85, galeev87}, the problem of estimating the widths of a periodic Sobolev class on $\mathbb{T}^d:= [0, \, 2\pi]^d$ with a restriction on the $L_p$-norm of some mixed partial Weyl derivative and of an intersection of such periodic Sobolev classes on $\mathbb{T}^1$ was considered; see also \cite{galeev4}. In \cite{vas_sob}, the result of the paper \cite{galeev2} about the widths of an intersection of Sobolev classes on $\mathbb{T}^1$ was generalized to the case of ``small smoothness'' for $q>2$, except some ``limiting'' cases.

Recall the definition of the Weyl derivative of a periodic function (see, e.g., \cite[Chapter 2, \S 2]{itogi_nt}). Let $d\in \N$, $d\ge 2$, $\mathbb{T}^d= [0, \, 2\pi]^d$. By ${\cal S}'(\mathbb{T}^d)$, we denote the space of distributions on $\mathbb{T}^d$ (as the space of test-functions, we take the set of infinitely smooth periodic functions). Given a distribution $f\in {\cal S}'(\mathbb{T}^d)$, we write its Fourier series: $f = \sum \limits _{\overline{k}\in \Z^d} c_{\overline{k}}(f) e^{i(\overline{k}, \, \cdot)}$, where the series converges with respect to the topology of the space ${\cal S}'(\mathbb{T}^d)$; here $(\cdot, \, \cdot)$ is the standard inner product on $\R^d$. We denote $$\mathaccent'27 \Z^d = \{(k_1, \, k_2, \, \dots, \, k_d)\in \Z^d:\; k_1k_2\dots k_d\ne 0\},$$
$$
\mathaccent'27{\cal S}'(\mathbb{T}^d) = \Bigl\{ f\in {\cal S}'(\mathbb{T}^d):\; f=\sum \limits _{\overline{k}\in \mathaccent'27\Z^d} c_{\overline{k}}(f) e^{i(\overline{k}, \, \cdot)}\Bigr\}.
$$

Let $r_j>0$, $1\le j\le d$. The Weyl derivative of order $r_j$ with respect to the variable $x_j$ of a distribution $f\in \mathaccent'27{\cal S}'(\mathbb{T}^d)$ is defined by
$$
\partial_j^{r_j}f:=\frac{\partial^{r_j}f}{\partial x_j^{r_j}} := \sum \limits _{\overline{k}\in \mathaccent'27\Z^d} c_{\overline{k}}(f) (ik_j)^{r_j} e^{i(\overline{k}, \, \cdot)},
$$
where $(ik_j)^{r_j}=|k_j|^{r_j}e^{{\rm sgn}\,k_j\cdot i\pi r_j/2}$.

Let $1<q<\infty$, $1<p_j<\infty$, $r_j>0$, $1\le j\le d$, $\overline{p}=(p_1, \, \dots, \, p_d)$, $\overline{r} = (r_1, \, \dots, \, r_d)$. We set
$$
W^{\overline{r}}_{\overline{p}}(\mathbb{T}^d) = \{f\in \mathaccent'27{\cal S}'(\mathbb{T}^d):\; \|\partial _j^{r_j}f\|_{L_{p_j}(\mathbb{T}^d)}\le 1, \; 1\le j\le d\}.
$$

In this paper, we obtain order estimates for the Kolmogorov widths of the class $W^{\overline{r}}_{\overline{p}}(\mathbb{T}^d)$ in the space $L_q(\mathbb{T}^d)$.

Given $\overline{a}=(a_1, \, \dots, \, a_d) \in \R^d$, we denote by $\langle \overline{a}\rangle$ the harmonic mean of $a_1, \, \dots, \, a_d$:
$$
\langle \overline{a}\rangle = \frac{d}{\frac{1}{a_1}+\dots + \frac{1}{a_d}}.
$$
Given $\overline{a}=(a_1, \, \dots, \, a_d)$, $\overline{b}=(b_1, \, \dots, \, b_d)\in \R^d$, we write $\overline{a}\circ\overline{b}=(a_1b_1, \, \dots, \, a_db_d)$.

From \cite[Theorem 1]{galeev_emb} it follows that the set $W^{\overline{r}}_{\overline{p}}(\mathbb{T}^d)$ is bounded in $L_q(\mathbb{T}^d)$ if and only if
$$
\frac{\langle \overline{r}\rangle}{d}+ \frac 1q - \frac{\langle \overline{r}\rangle}{\langle \overline{p}\circ\overline{r}\rangle} \ge 0.
$$
If we replace the inequality by the strict one, the embedding will be compact (see \cite[Theorem 5]{galeev_emb}).

Now we formulate the main results of the paper.

First we consider the case when some additional restrictions on the parameters hold. Then (except some ``limiting'' relations on the parameters) the estimates for the widths will be written explicitly.

Let $X$, $Y$ be sets, and let $f_1$, $f_2:\ X\times Y\rightarrow \mathbb{R}_+$.
We denote $f_1(x, \, y)\underset{y}{\lesssim} f_2(x, \, y)$ (or
$f_2(x, \, y)\underset{y}{\gtrsim} f_1(x, \, y)$) if for each 
$y\in Y$ there is $c(y)>0$ such that $f_1(x, \, y)\le
c(y)f_2(x, \, y)$ for all $x\in X$; $f_1(x, \,
y)\underset{y}{\asymp} f_2(x, \, y)$ if $f_1(x, \, y)
\underset{y}{\lesssim} f_2(x, \, y)$ and $f_2(x, \,
y)\underset{y}{\lesssim} f_1(x, \, y)$.

\begin{Trm}
\label{main1} Let $d\in \N$, $d\ge 2$, $1<q<\infty$, $1<p_j<\infty$, $r_j>0$, $j=1, \, \dots, \, d$, and let $\frac{\langle \overline{r}\rangle}{d}+ \frac 1q - \frac{\langle \overline{r}\rangle}{\langle \overline{p}\circ\overline{r}\rangle} > 0$. Suppose that
\begin{align}
\label{dop_usl} \sum \limits _{i=1}^d \frac{1}{r_i}\left(\frac{1}{p_i}-\frac{1}{p_j}\right)<1, \quad j=1, \, \dots, \, d.
\end{align}
\begin{enumerate}
\item Let $p_j\ge q$, $j=1, \, \dots, \, d$. Then
$$
d_n(W^{\overline{r}}_{\overline{p}}(\mathbb{T}^d), \, L_q(\mathbb{T}^d)) \underset{\overline{r}, \overline{p}, q, d}{\asymp} n^{-\langle \overline{r}\rangle /d}.
$$
\item Let $1<q\le 2$.
\begin{enumerate}
\item If $p_i\le q$ for all $i=1, \, \dots, \, d$, then
$$
d_n(W^{\overline{r}}_{\overline{p}}(\mathbb{T}^d), \, L_q(\mathbb{T}^d)) \underset{\overline{r}, \overline{p}, q, d}{\asymp} n^{-\langle \overline{r}\rangle /d-1/q+\langle \overline{r}\rangle/\langle \overline{p} \circ\overline{r}\rangle}.
$$

\item Let $\{i\in \overline{1, \, d}:\; p_i>q\}\ne \varnothing$ and $\{i\in \overline{1, \, d}:\; p_i<q\}\ne \varnothing$; in addition, let $\langle \overline{r}\rangle/\langle \overline{p} \circ\overline{r}\rangle \ne 1/q$. Then
$$
d_n(W^{\overline{r}}_{\overline{p}}(\mathbb{T}^d), \, L_q(\mathbb{T}^d)) \underset{\overline{r}, \overline{p}, q, d}{\asymp} n^{-\min \{\langle \overline{r}\rangle /d, \, \langle \overline{r}\rangle /d+1/q-\langle \overline{r}\rangle/\langle \overline{p} \circ\overline{r}\rangle \}}.
$$
\end{enumerate}
\item Let $2<q<\infty$; we suppose that there is $i\in \{1, \, \dots, \, d\}$ such that $p_i<q$.
We denote $\theta_1 = \frac{\langle \overline{r}\rangle}{d}$, $\theta_2= \frac{\langle \overline{r}\rangle}{d}+ \frac 12 - \frac{\langle \overline{r}\rangle}{\langle \overline{p}\circ\overline{r}\rangle}$,
$\theta_3 = \frac q2\Bigl( \frac{\langle \overline{r}\rangle}{d} + \frac 1q - \frac{\langle \overline{r}\rangle}{\langle \overline{p}\circ \overline{r}\rangle}\Bigr)$.
\begin{enumerate}
\item Let $p_i\le 2$, $1\le i\le d$. Suppose that $\theta_2\ne \theta_3$. Then
$$
d_n(W^{\overline{r}}_{\overline{p}}(\mathbb{T}^d), \, L_q(\mathbb{T}^d)) \underset{\overline{r}, \overline{p}, q, d}{\asymp} n^{-\min \{\theta_2, \, \theta_3\}}.
$$
\item Let $p_i\ge 2$, $1\le i\le d$. Suppose that $\theta_1\ne \theta_3$. Then 
$$
d_n(W^{\overline{r}}_{\overline{p}}(\mathbb{T}^d), \, L_q(\mathbb{T}^d)) \underset{\overline{r}, \overline{p}, q, d}{\asymp} n^{-\min \{\theta_1, \, \theta_3\}}.
$$
\item Let $\{i\in \overline{1, \, d}:\; p_i<2\}\ne \varnothing$, $\{i\in \overline{1, \, d}:\; p_i>2\}\ne \varnothing$. Suppose that there exists $j_*\in \{1, \, 2, \, 3\}$ such that $\theta_{j_*}<\min _{j\ne j_*} \theta_j$. Then
$$
d_n(W^{\overline{r}}_{\overline{p}}(\mathbb{T}^d), \, L_q(\mathbb{T}^d)) \underset{\overline{r}, \overline{p}, q, d}{\asymp} n^{-\theta_{j_*}}.
$$
\end{enumerate}
\end{enumerate}
\end{Trm}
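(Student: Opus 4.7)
The plan is to reduce the problem, via a Littlewood--Paley dyadic decomposition, to estimates for Kolmogorov widths of finite-dimensional balls and then apply the classical results on $d_m(B_p^N,\ell_q^N)$ of Pietsch, Stesin, Kashin, Gluskin, and Garnaev--Gluskin. Write $f=\sum_{\overline{s}\in\Z_+^d}\Delta_{\overline{s}}f$, where $\Delta_{\overline{s}}f$ is the trigonometric polynomial with frequencies in the dyadic block $\rho(\overline{s})=\prod_{j=1}^d\{k\in\Z:2^{s_j-1}\le|k|<2^{s_j}\}$. For $1<p_j,q<\infty$ Littlewood--Paley theory yields $\|f\|_{L_q}\asymp\|(\sum_{\overline{s}}|\Delta_{\overline{s}}f|^2)^{1/2}\|_{L_q}$ and the constraints \eqref{norm_cond1} translate into the blockwise bounds $\|\Delta_{\overline{s}}f\|_{L_{p_j}}\lesssim 2^{-r_js_j}$ for each $j$. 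The trigonometric-polynomial space on $\rho(\overline{s})$ has dimension $N_{\overline{s}}=\prod_j 2^{s_j}$, and a Marcinkiewicz-type discretization identifies the blockwise subproblem, up to constants, with estimating $d_m\bigl(\bigcap_{j=1}^d 2^{-r_js_j}B_{p_j}^{N_{\overline{s}}},\,\ell_q^{N_{\overline{s}}}\bigr)$.

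For the upper bound I would assemble blockwise approximants and optimize the dimension budget $n=\sum_{\overline{s}}m_{\overline{s}}$. Partition $\Z_+^d$ into a ``full-block'' zone where $m_{\overline{s}}=N_{\overline{s}}$ (the block is reproduced exactly, Pietsch--Stesin), a ``Kashin/Garnaev--Gluskin'' zone where $0<m_{\overline{s}}<N_{\overline{s}}$, and a ``trivial'' zone where $m_{\overline{s}}=0$ and the block is discarded. The thresholds separating these zones are dictated by the explicit formula for $d_m(B_p^N,\ell_q^N)$ in the relevant $(p,q)$-regime and depend on $n$ together with $\overline{r},\overline{p},q,d$. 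The cardinality of the level set $\{\overline{s}:\sum_j r_js_j\le T\}$ grows polynomially in $T$; the hypothesis \eqref{dop_usl} is precisely what makes this growth isotropic enough that, after cardinality weighting, a single geometric scale $T=T(n)$ dominates. The exponents $\theta_1,\theta_2,\theta_3$ (and the simpler exponents of items~1 and~2) then emerge as the rate of the dominant finite-dimensional width at that scale.

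For the lower bound I would locate, in each case, a single extremal block $\overline{s}^*$ whose rescaled $\ell_{p_j}$-ball embeds into $W^{\overline{r}}_{\overline{p}}(\mathbb{T}^d)$ by a trigonometric lift, and pull back the corresponding lower bound for $d_n$ on that block. The main obstacle is case~3(c) (small-smoothness regime, $2<q<\infty$, with the $p_i$'s straddling~$2$): there the three exponents $\theta_1,\theta_2,\theta_3$ genuinely compete, and matching the upper bound requires running Kashin-type and Garnaev--Gluskin-type estimates simultaneously on different zones, with $\overline{s}^*$ chosen according to which $\theta_{j_*}$ is minimal; the excluded equalities between the $\theta_j$ correspond to the ``limiting'' configurations in which two zones tie and logarithmic corrections appear. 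Items~1, 2, and 3(a)(b) are milder variants in which only one or two zones are active and the relevant finite-dimensional widths behave monotonically in a transparent way; they generalize, to the anisotropic $d$-dimensional frequency model, the intersection-of-Sobolev-classes theorems of \cite{galeev2,vas_sob}, and I would handle them first as a warm-up.
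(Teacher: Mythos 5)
Your skeleton (Littlewood--Paley splitting, Marcinkiewicz discretization of each block, a dimension budget $\sum_{\overline{m}}k_{\overline{m}}\lesssim n$, and a block-by-block application of finite-dimensional width estimates) is exactly the reduction the paper performs in Lemmas \ref{low_est_lem}--\ref{up_est_lem}. But there is a genuine gap at the heart of the argument: after discretization the per-block set is the \emph{intersection} $\cap_{j}2^{-m_jr_j-m/q+m/p_j}B_{p_j}^{2^m}$ of $d$ differently scaled balls, and you treat its width as if it were governed by the single-ball formulas of Pietsch--Stesin and Gluskin. It is not. By Theorem \ref{fin_dim_inters} the width of the intersection also involves the interpolated quantities $\nu_i^{1-\lambda_{i,j}}\nu_j^{\lambda_{i,j}}$ (and, for $q>2$, $\nu_i^{1-\mu_{i,j}}\nu_j^{\mu_{i,j}}n^{-1/2}N^{1/q}$), which at generic blocks are strictly smaller than the minimum of the single-ball widths. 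This kills both halves of your plan. For the upper bound: already for $d=2$, $1<q\le 2$, $p_1>q>p_2$, the block minimizing $\max\{h_1,h_2\}$ (single-ball exponents only) gives a per-block bound of order $n^{-c}$ with $c$ strictly smaller than the true exponent $\frac{\langle\overline{r}\rangle}{d}+\frac1q-\frac{\langle\overline{r}\rangle}{\langle\overline{p}\circ\overline{r}\rangle}$, so the sum over blocks cannot reach the claimed rate without the interpolation terms. For the lower bound: a rescaled coordinate ball $\nu_jB_{p_j}^N$ generally does \emph{not} embed into the intersection, and the sharp lower estimates (Lemmas \ref{expl1}, \ref{expl2}) instead require placing a scaled Gluskin set $V_k$ (convex hull of signed permutations of a $0$--$1$ vector) inside $2M_0$ and invoking the lower bounds \eqref{kq1}, \eqref{gl_q_l_2} for $d_n(V_k,l_q^N)$; this is precisely how the exponents $\theta_2$ and the mixed exponent of case 2(b) arise, and no single-ball embedding produces them.

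A second, smaller but still substantive, omission is the derivation of the explicit exponents. After the reduction one must minimize a piecewise-affine function $h$ (resp. $\tilde h$) over the simplex $D$ (resp. the polytope $\tilde D$ with the extra parameter $s\in[1,q/2]$), and the bulk of the paper's proof of Theorem \ref{main1} is the combinatorial analysis (Lemma \ref{oblast} and the polyhedral decomposition of $\tilde D$) showing the minimum is attained only at one of the vertices $\xi_1,\xi_2,\xi_3$ with values $\theta_1,\theta_2,\theta_3$, plus a perturbation argument for the degenerate cases $p_i\in\{2,q\}$. Your reading of hypothesis \eqref{dop_usl} as controlling the ``isotropy'' of level-set cardinalities is off the mark: its role is to guarantee that the candidate minimizers $\xi_2,\xi_3$ have nonnegative coordinates, i.e., actually lie in the admissible domain, so that the explicit formulas for $\min\tilde h$ are valid. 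To repair the proof you would need to import the intersection-width theorem of \cite{vas_sob} (both directions) and carry out the min--max computation; as written, the proposal cannot produce the exponents $\theta_2$, $\theta_3$, or the mixed exponent in case 2(b).
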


\begin{Rem}
\label{rem1}
We will also prove that, for $p_j\ge q$, $1\le j\le d$, if \eqref{dop_usl} fails, the order estimate from Theorem \ref{main1} holds as well.
\end{Rem}

Now we consider the general case, when (\ref{dop_usl}) may fail.

In what follows, we set $\max \varnothing:=-\infty$.

\begin{Trm}
\label{main} Let $d\in \N$, $d\ge 2$, $1<q<\infty$, $1<p_j<\infty$, $r_j>0$, $j=1, \, \dots, \, d$, and let $\frac{\langle \overline{r}\rangle}{d}+ \frac 1q - \frac{\langle \overline{r}\rangle}{\langle \overline{p}\circ\overline{r}\rangle} > 0$.
\begin{enumerate}
\item Let $1<q\le 2$. We denote
\begin{align}
\label{ij_def}
\begin{array}{c}
I_0=\{j\in \overline{1, \, d}:\; p_j\ge q\}, \quad J_0=\{j\in \overline{1, \, d}:\; p_j\le q\}, \\
I_0'=\{j\in \overline{1, \, d}:\; p_j> q\}, \quad J_0'=\{j\in \overline{1, \, d}:\; p_j< q\};
\end{array}
\end{align}
the numbers $\lambda_{i,j}\in [0, \, 1]$ are defined by
\begin{align}
\label{1qlijql2}
\frac 1q =\frac{1-\lambda_{i,j}}{p_i} + \frac{\lambda_{i,j}}{p_j}, \quad i\in I_0', \; j\in J_0'.
\end{align}
Given $\alpha_1, \, \dots, \, \alpha_d\in \R$, we set
$$
h_1(\alpha_1, \, \dots, \, \alpha_d) = \max _{j\in I_0} r_j\alpha_j,
$$
$$
h_2(\alpha_1, \, \dots, \, \alpha_d) = \max_{j\in J_0}(r_j\alpha_j-1/p_j+1/q),
$$
$$
h_3(\alpha_1, \, \dots, \, \alpha_d) = \max _{i\in I_0', \, j\in J_0'} ((1-\lambda_{i,j})r_i\alpha_i+\lambda_{i,j}r_j\alpha_j),
$$
\begin{align}
\label{h_def_00}
h(\alpha_1, \, \dots, \, \alpha_d)= \max _{1\le j\le 3} h_j(\alpha_1, \, \dots, \, \alpha_d).
\end{align}
Suppose that the minimum point $(\hat\alpha_1, \, \dots, \, \hat\alpha_d)$ of the function $h$ on the set
\begin{align}
\label{d_set_def_00}
D = \{(\alpha_1, \, \dots, \, \alpha_d)\in \R^{d}:\; \alpha_1\ge 0, \, \dots, \, \alpha_d\ge 0, \; \alpha_1+\dots+\alpha_d=1\}
\end{align}
is unique. Then
$$
d_n(W^{\overline{r}}_{\overline{p}}(\mathbb{T}^d), \, L_q(\mathbb{T}^d)) \underset{\overline{r}, \, \overline{p}, \, q, \, d}{\asymp} n^{-h(\hat \alpha_1, \, \dots, \, \hat \alpha_d)}.
$$

\item Let $2<q<\infty$. We set
\begin{align}
\label{ijk_def}
\begin{array}{c}
I=\{j\in \overline{1, \, d}:\; p_j\ge q\}, \quad J=\{j\in \overline{1, \, d}:\; 2\le p_j\le q\},
\\
K=\{j\in \overline{1, \, d}:\; p_j\le 2\}, \\
I'=\{j\in \overline{1, \, d}:\; p_j> q\}, \quad J'=\{j\in \overline{1, \, d}:\; 2< p_j< q\},
\\
K'=\{j\in \overline{1, \, d}:\; p_j< 2\};
\end{array}
\end{align}
the numbers $\lambda_{i,j}\in [0, \, 1]$ and $\mu_{i,j}\in [0, \, 1]$ are defined by
\begin{align}
\label{lam_ij_mu_ij_def}
\begin{array}{c}
\frac 1q =\frac{1-\lambda_{i,j}}{p_i} + \frac{\lambda_{i,j}}{p_j}, \quad i\in I', \; j\in J'\cup K; \\ \frac 12 =\frac{1-\mu_{i,j}}{p_i} + \frac{\mu_{i,j}}{p_j}, \quad i\in I\cup J', \; j\in K'.
\end{array}
\end{align}
Given $\alpha_1, \, \dots, \, \alpha_d\in \R$, $s\in \R$, we write
$$
\tilde h_1(\alpha_1, \, \dots, \, \alpha_d, \, s) = \max _{j\in I} r_j\alpha_j,
$$
$$
\tilde h_2(\alpha_1, \, \dots, \, \alpha_d, \, s) = \max _{j\in J} \left(r_j\alpha_j -\frac 12\cdot \frac{1/p_j-1/q}{1/2-1/q}(s-1)\right),
$$
$$
\tilde h_3(\alpha_1, \, \dots, \, \alpha_d, \, s) = \max_{j\in K}(r_j\alpha_j-s/p_j+1/2),
$$
$$
\tilde h_4(\alpha_1, \, \dots, \, \alpha_d, \, s) = \max _{i\in I', \, j\in J'\cup K} ((1-\lambda_{i,j})r_i\alpha_i+\lambda_{i,j}r_j\alpha_j),
$$
$$
\tilde h_5(\alpha_1, \, \dots, \, \alpha_d, \, s) = \max _{i\in I\cup J', \, j\in K'} ((1-\mu_{i,j})r_i\alpha_i+\mu_{i,j}r_j\alpha_j-s/2+1/2),
$$
$$
\tilde h(\alpha_1, \, \dots, \, \alpha_d, \, s)= \max _{1\le j\le 5} \tilde h_j(\alpha_1, \, \dots, \, \alpha_d, \, s).
$$
Suppose that the minimum point $(\hat \alpha_1, \, \dots, \, \hat \alpha_d, \, \hat s)$ of the function $\tilde h$ on the set
$$
\tilde D = \{(\alpha_1, \, \dots, \, \alpha_d, \, s)\in \R^{d+1}:\; 1\le s \le q/2,$$$$\alpha_1\ge 0, \, \dots, \, \alpha_d\ge 0, \; \alpha_1+\dots+\alpha_d=s\}
$$
is unique. Then
$$
d_n(W^{\overline{r}}_{\overline{p}}(\mathbb{T}^d), \, L_q(\mathbb{T}^d)) \underset{\overline{r}, \, \overline{p}, \, q, \, d}{\asymp} n^{-\tilde h(\hat \alpha_1, \, \dots, \, \hat \alpha_d, \, \hat s)}.
$$
\end{enumerate}
\end{Trm}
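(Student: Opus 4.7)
The plan is to reduce to finite-dimensional problems on dyadic blocks, apply sharp Kolmogorov-width estimates for intersections of $\ell_{p_j}$-balls in $\ell_q$, and then read the exponent off from the linear program defining $h$ (respectively $\tilde h$). Given $\overline{s}=(s_1,\dots,s_d)\in\Z_+^d$, let $V_{\overline{s}}$ be a de la Vall\'ee Poussin kernel concentrated on frequencies with $|k_j|\asymp 2^{s_j}$, and decompose $f=\sum_{\overline{s}} f_{\overline{s}}$ with $f_{\overline{s}}=V_{\overline{s}}*f$. Littlewood--Paley theory combined with the defining inequalities of $W^{\overline{r}}_{\overline{p}}(\mathbb{T}^d)$ gives $\|f_{\overline{s}}\|_{L_{p_j}}\lesssim 2^{-r_js_j}$ for each $j$, so each block lies in the intersection of scaled $\ell_{p_j}$-balls of ambient dimension $N_{\overline{s}}\asymp 2^{s_1+\dots+s_d}$.

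For the upper bound I parametrize the relevant dyadic layer by $s_j\approx\alpha_jM$ with $M$ chosen so that $2^M\asymp n$, and distribute the budget of $n$ approximating dimensions across blocks using order-sharp estimates for the widths of finite-dimensional $\ell_{p_j}$-ball intersections in $\ell_q$: Pietsch--Stesin bounds when $p_j\ge q$, and Kashin--Gluskin--Garnaev--Maiorov-type bounds when one must descend through $L_2$. Blocks whose $L_q$-norm already lies below the target are approximated by zero. In the case $2<q<\infty$ the parameter $s\in[1,q/2]$ plays the role of the ratio $\log N_{\overline{s}}/\log n$ at the critical layer, and the five functionals $\tilde h_1,\dots,\tilde h_5$ encode the five available approximation strategies: approximation by zero on indices in $I$; a Riesz--Thorin interpolation between $L_2$ and $L_q$ on $J$ (the factor involving $s-1$ is exactly that interpolation exponent); direct embedding on $K$; and two H\"older-type interpolations over pairs $I'\times(J'\cup K)$ and $(I\cup J')\times K'$ that correspond to $\tilde h_4,\tilde h_5$. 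In the case $1<q\le 2$ only three of these survive, yielding $h$.

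For the lower bound I use the unique minimizer $(\hat\alpha_1,\dots,\hat\alpha_d)$ (respectively $(\hat\alpha_1,\dots,\hat\alpha_d,\hat s)$) and the index $j_*$ attaining the maximum in $h$ (respectively $\tilde h$). The corresponding finite-dimensional lower bound (Pietsch--Stesin, volume argument, or Gluskin--Garnaev depending on which $\tilde h_j$ is active) applied to a single well-chosen block $f_{\overline{s}^*}$ with $s_j^*\asymp\hat\alpha_jM$ reproduces the upper bound: since the active functional is strictly larger than the others by the uniqueness assumption, the adversary's optimal test function is uniquely determined, and this pinpointing of the worst block is what matches the exponent exactly.

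The main technical obstacle is the upper bound when $2<q<\infty$: one must simultaneously balance the dyadic direction $\overline{\alpha}$, the scale parameter $s$, and the allocation of the dimension budget $n$ across many blocks, while invoking the correct finite-dimensional width estimate in each Kashin--Gluskin--Garnaev regime. Verifying that every feasible approximation scheme is dominated by $\tilde h(\hat\alpha,\hat s)$, and that $\tilde D$ and $\tilde h$ actually capture every available strategy (not just the obvious ones), is the principal bookkeeping step; the uniqueness of the minimum point is what prevents the logarithmic losses that would otherwise appear in the ``limiting'' configurations excluded by hypothesis.
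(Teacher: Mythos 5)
Your plan coincides with the paper's actual proof: Littlewood--Paley decomposition into dyadic blocks ${\cal T}_{\overline{m}}$, reduction to Kolmogorov widths of intersections $\cap_j 2^{-m_jr_j-m/q+m/p_j}B_{p_j}^{2^m}$ in $l_q^{2^m}$ via the Gluskin/Pietsch--Stesin estimates and the intersection-width theorem, a dimension budget concentrated geometrically around the block $\overline{m}_*=(\hat\alpha_1,\dots,\hat\alpha_d)\log n$, a single-block lower bound at that same point, and the uniqueness of the minimizer supplying the linear growth of the exponent away from $\overline{m}_*$ that kills the logarithmic factor. The argument is correct in outline and essentially identical to the paper's; the only part left implicit is the quantitative allocation $k_{\overline{m}}\asymp n\cdot 2^{-\varepsilon|\overline{m}-\overline{m}_*|}$ with $\varepsilon$ small relative to the growth constant, which is exactly how the paper completes the bookkeeping you describe.
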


Now we consider the case when the condition (\ref{dop_usl}) fails.

\begin{Trm}
\label{not_emb} Let $d\in \N$, $d\ge 2$, $r_k>0$, $1<p_k\le q<\infty$ for all $k=1, \, \dots, \, d$. Suppose that there is $j\in \{1, \, \dots, \, d\}$ such that $\sum \limits _{i=1}^d \frac{1}{r_i}\left(\frac{1}{p_i}-\frac{1}{p_j}\right)\ge 1$. Then $W^{\overline{r}}_{\overline{p}}(\mathbb{T}^d)$ is not compactly embedded into $L_q(\mathbb{T}^d)$.
\end{Trm}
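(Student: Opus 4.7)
The plan is to exhibit a sequence $\{f_T\}_{T\ge 1}\subset W^{\overline{r}}_{\overline{p}}(\mathbb{T}^d)$ with $\|f_T\|_{L_q(\mathbb{T}^d)}\gtrsim 1$ and $f_T\to 0$ almost everywhere on $\mathbb{T}^d$; any $L_q$-convergent subsequence would then have almost-everywhere limit zero, contradicting the lower bound and proving that the unit ball of $W^{\overline{r}}_{\overline{p}}$ is not precompact in $L_q$. To set up scaling exponents, note that since $p_j\le q$, the hypothesis yields $\sum_i r_i^{-1}(1/p_i-1/q)\ge \sum_i r_i^{-1}(1/p_i-1/p_j)\ge 1$. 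Writing $B_j:=(1/p_j-1/q)/r_j\ge 0$, we have $\sum_j B_j\ge 1$; taking $\alpha_j:=B_j/\sum_k B_k$ yields nonnegative $\alpha_j$ with $\sum_j\alpha_j=1$, $\alpha_j r_j\le 1/p_j-1/q$, and at least one $\alpha_j>0$.

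For a one-dimensional building block I avoid compactly supported bumps (whose fractional Weyl derivatives are delocalized and awkward to control) and use a bandlimited one instead. Pick $\Psi\in C_c^{\infty}(\R)$ supported in $[1/4,1/2]$, not identically zero, and set $\psi:=\mathcal{F}^{-1}\Psi\in\mathcal{S}(\R)$. For integer $N\ge 4$, let $\phi_N\in C^\infty(\mathbb{T}^1)$ be the $2\pi$-periodization of $\psi(N\,\cdot\,)$. Poisson summation gives $\hat\phi_N(k)=(2\pi N)^{-1}\Psi(k/N)$, supported in $\{k\in\N:\ N/4\le k\le N/2\}$, so in particular $\hat\phi_N(0)=0$. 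Setting $h(\xi):=|\xi|^r e^{i\pi r\,\mathrm{sgn}(\xi)/2}$ one has $(ik)^r=N^r h(k/N)$, hence $\partial^{r}\phi_N=N^r\cdot\widetilde{\psi_r(N\,\cdot\,)}$ with $\psi_r:=\mathcal{F}^{-1}(h\Psi)\in\mathcal{S}(\R)$ (the product $h\Psi$ lies in $C_c^\infty(\R)$ because $\Psi$ is supported away from $0$). A direct change of variable then yields, uniformly for large $N$ and any $1<p<\infty$, $r>0$,
$$
\|\phi_N\|_{L_p(\mathbb{T}^1)}\asymp N^{-1/p},\qquad \|\partial^{r}\phi_N\|_{L_p(\mathbb{T}^1)}\asymp N^{r-1/p}.
$$

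Now set $N_i(T):=\lceil 4T^{\alpha_i}\rceil$ and $f_T(x):=c_T\prod_{i=1}^d\phi_{N_i(T)}(x_i)$. Since each factor has $\hat\phi_{N_i}(0)=0$, $f_T\in\mathaccent'27{\cal S}'(\mathbb{T}^d)$, and the product structure gives
$$
\|\partial_j^{r_j}f_T\|_{L_{p_j}}\asymp c_T\,N_j(T)^{r_j}\prod_i N_i(T)^{-1/p_j},\qquad \|f_T\|_{L_q}\asymp c_T\prod_i N_i(T)^{-1/q}.
$$
Taking $c_T$ to be a small fixed constant times $\min_j\bigl(N_j(T)^{-r_j}\prod_i N_i(T)^{1/p_j}\bigr)$ places $f_T\in W^{\overline{r}}_{\overline{p}}$ and yields $\|f_T\|_{L_q}\gtrsim \min_j N_j(T)^{-r_j}\prod_i N_i(T)^{1/p_j-1/q}$. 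Plugging in $N_i(T)\asymp T^{\alpha_i}$ and using $\sum_i\alpha_i=1$, the $j$-th $T$-exponent simplifies to $-\alpha_j r_j+(1/p_j-1/q)\ge 0$ by the construction of $\alpha$, so $\|f_T\|_{L_q}\gtrsim 1$ uniformly in $T$.

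Finally, some $\alpha_i>0$ gives $N_i(T)\to\infty$; the Schwartz estimate $|\phi_N(t)|\lesssim_K(1+N\,\mathrm{dist}(t,2\pi\Z))^{-K}$ for any $K$, together with the at-most-polynomial growth of $c_T$ in $T$, forces $f_T(x)\to 0$ for every $x$ with $\mathrm{dist}(x_i,2\pi\Z)>0$ for all $i$, which covers almost every $x\in\mathbb{T}^d$. This completes the argument. The principal technical step is the Weyl-derivative scaling in the second paragraph: the bandlimited choice makes it a direct Poisson-summation and change-of-variable computation (both upper and lower bounds) that extends cleanly from integer to arbitrary real $r>0$.
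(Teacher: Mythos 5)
Your argument is correct in substance and takes a genuinely different route from the paper. The paper argues by induction on the dimension: it first shows (using the width machinery of Theorems \ref{main1} and \ref{main} together with Remark \ref{h_otr}, and the known one-dimensional result as the base case) that $\frac{\langle\overline r\rangle}{d}+\frac1q-\frac{\langle\overline r\rangle}{\langle\overline r\circ\overline p\rangle}\le 0$ already precludes compact embedding; it then observes that the hypothesis $\sum_i r_i^{-1}(1/p_i-1/p_j)\ge 1$ is exactly this inequality for the $(d-1)$-dimensional class obtained by deleting the $j$-th coordinate, with $q$ replaced by $p_j\le q$, and concludes by restriction. You instead produce an explicit non-precompact family of bandlimited tensor-product bumps $f_T=c_T\prod_i\phi_{N_i(T)}(x_i)$ with $N_i(T)\asymp T^{\alpha_i}$, where the weights $\alpha_i\propto(1/p_i-1/q)/r_i$ encode the same scaling obstruction: $\sum_i(1/p_i-1/q)/r_i\ge 1$ is precisely what makes $\alpha_jr_j\le 1/p_j-1/q$ achievable with $\sum_i\alpha_i=1$. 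Your construction is self-contained, bypasses the width estimates entirely, and the Poisson-summation treatment of the fractional Weyl derivative together with the norm computations is sound. One small repair is needed: when $p_i=q$ you get $\alpha_i=0$ and $N_i(T)=\lceil 4T^{0}\rceil=4$, but $\hat\phi_4(k)=(8\pi)^{-1}\Psi(k/4)$ vanishes for every integer $k$ (only $k=1,2$ could contribute, and $\Psi(1/4)=\Psi(1/2)=0$ since $\mathrm{supp}\,\Psi\subset[1/4,1/2]$ forces $\Psi$ to vanish at the endpoints of that interval), so $\phi_4\equiv0$ and hence $f_T\equiv0$. Replacing the constant $4$ by a fixed $N_0$ large enough that $N_0\cdot\mathrm{supp}\,\Psi$ contains an integer in its interior (and large enough for your ``large $N$'' asymptotics to apply) fixes this; nothing else in the argument changes.
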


For $d=2$, when the condition (\ref{dop_usl}) fails, the orders of the widths are written explicitly. Let, without loss of generality, $p_1>p_2$. Then $r_2\le \frac{1}{p_2} -\frac{1}{p_1}$. By Remark \ref{rem1} and Theorem \ref{not_emb}, it suffices to consider the case $p_2<q<p_1$.

\begin{Trm}
\label{main2} Let $1<p_2<q<p_1<\infty$, $r_1>0$, $r_2>0$. Suppose that 
\begin{align}
\label{r2lp2p1}
r_2\le \frac{1}{p_2} -\frac{1}{p_1}.
\end{align}
\begin{enumerate}
\item Let $1<q\le 2$.
We define the number $\lambda\in (0, \, 1)$ by the equation $\frac 1q= \frac{1-\lambda}{p_1} + \frac{\lambda}{p_2}$. Suppose that $\frac{\langle \overline{r}\rangle}{2} \ne \lambda r_2$. Then
\begin{align}
\label{dn_wrp_r2}
d_n(W^{\overline{r}}_{\overline{p}}(\mathbb{T}^2), \, L_q(\mathbb{T}^2)) \underset{\overline{r}, \, \overline{p}, \, q}{\asymp} n^{-\min \{\langle \overline{r}\rangle/2, \, \lambda r_2\}}.
\end{align}
\item Let $2<q<\infty$. We define the number $\lambda\in (0, \, 1)$ by the equation $\frac 1q= \frac{1-\lambda}{p_1} +\frac{\lambda}{p_2}$, and the number $\hat s$, by the equation $\hat s\left(1-\frac{r_2(1-2/q)}{1/p_2-1/p_1}\right)=1$.
\begin{enumerate}
\item Let $p_2\ge 2$. We set $\theta_1 = \frac{\langle \overline{r}\rangle}{2}$, $\theta_2 = \hat s\lambda r_2$. Suppose that $\theta_1\ne \theta_2$. Then 
$$
d_n(W^{\overline{r}}_{\overline{p}}(\mathbb{T}^2), \, L_q(\mathbb{T}^2)) \underset{\overline{r}, \, \overline{p}, \, q}{\asymp} n^{-\min \{\theta_1, \, \theta_2\}}.
$$
\item Let $p_2<2$. We define the number $\mu\in (0, \, 1)$ by the equation $\frac 12= \frac{1-\mu}{p_1} +\frac{\mu}{p_2}$. Let $\theta_1 = \frac{\langle \overline{r}\rangle}{2}$, $\theta_2 = \hat s\lambda r_2$, $\theta_3 = \mu r_2$. Suppose that there exists $j_*\in \{1, \, 2, \, 3\}$ such that
\begin{align}
\label{theta_j_st_l_min}
\theta_{j_*} = \min _{j\ne j_*} \theta_j.
\end{align}
Then
$$
d_n(W^{\overline{r}}_{\overline{p}}(\mathbb{T}^2), \, L_q(\mathbb{T}^2)) \underset{\overline{r}, \, \overline{p}, \, q}{\asymp} n^{-\theta_{j_*}}.
$$
\end{enumerate}
\end{enumerate}
\end{Trm}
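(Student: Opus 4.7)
The plan is to deduce Theorem~\ref{main2} from Theorem~\ref{main} by explicitly computing the minimum of the piecewise-linear function $h$ (resp.\ $\tilde h$) on the polytope $D$ (resp.\ $\tilde D$) in dimension $d=2$, under the present hypotheses $p_2<q<p_1$ and $r_2\le 1/p_2-1/p_1$. The assumption $p_2<q<p_1$ collapses the index sets: $I_0=I_0'=\{1\}$, $J_0=J_0'=\{2\}$, and $\lambda_{1,2}$ coincides with the $\lambda$ of the statement. For $q>2$, $I=I'=\{1\}$ in both subcases, while $(J,J',K,K')$ specialize to $(\{2\},\{2\},\varnothing,\varnothing)$ when $p_2\ge 2$, and to $(\varnothing,\varnothing,\{2\},\{2\})$ when $p_2<2$, with $\mu_{1,2}$ equal to the $\mu$ of the statement.

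For part (1), setting $\alpha_2=1-\alpha_1$, the identity $1/p_2-1/q=(1-\lambda)(1/p_2-1/p_1)$ gives
$$
h_2(\alpha_1)-h_3(\alpha_1)=(1-\lambda)\bigl[r_2-(1/p_2-1/p_1)-(r_1+r_2)\alpha_1\bigr]\le 0
$$
on $[0,1]$ by (\ref{r2lp2p1}), so $h=\max(h_1,h_3)$. The increasing $h_1$ starting from $0$ and the affine $h_3$ running from $\lambda r_2$ at $\alpha_1=0$ to $(1-\lambda)r_1$ at $\alpha_1=1$ cross in $[0,1]$ exactly when $h_3$ is non-increasing, i.e.\ $(1-\lambda)r_1\le\lambda r_2$; then the unique minimum is at $\alpha_1=r_2/(r_1+r_2)$ with value $\langle\overline r\rangle/2$. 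Otherwise $h_3$ is strictly increasing with $h_3(0)>h_1(0)$, and the unique minimum is at $\alpha_1=0$ with value $\lambda r_2$. These two regimes correspond respectively to $\langle\overline r\rangle/2\le\lambda r_2$ and $\langle\overline r\rangle/2>\lambda r_2$, so the minimum equals $\min\{\langle\overline r\rangle/2,\lambda r_2\}$; the hypothesis $\langle\overline r\rangle/2\ne\lambda r_2$ ensures uniqueness, and (\ref{dn_wrp_r2}) follows from Theorem~\ref{main}.

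For part (2), I parametrize by fixing $s\in[1,q/2]$ and minimizing over $(\alpha_1,\alpha_2)$ with $\alpha_1+\alpha_2=s$, then optimizing in $s$. On the slice $s=1$ in both subcases, the balance $r_1\alpha_1=r_2\alpha_2$ makes $\tilde h_1=\tilde h_4$ (and also $\tilde h_5$ in 2b) equal to $\theta_1=\langle\overline r\rangle/2$, the other branches being dominated; a local perturbation shows the min at $s=1$ equals $\theta_1$ unless the competing vertex $\alpha_1=0$ gives something smaller. That vertex in subcase 2b yields $\tilde h_5=\mu r_2=\theta_3$ via the inequality $\tilde h_5-\tilde h_3=(1/p_2-1/2)[1-r_2/(1/p_2-1/p_1)]\ge 0$, which uses (\ref{r2lp2p1}). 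On the edge $\alpha_1=0$ with variable $s$, the line $\tilde h_4=\lambda r_2 s$ meets the competitor ($\tilde h_2$ in 2a, $\tilde h_3$ in 2b) precisely at $s=\hat s$; the defining equation of $\hat s$ is exactly the consistency relation $(1-\lambda)r_2\hat s=C(\hat s-1)$ with $C=(1/2)(1/p_2-1/q)/(1/2-1/q)$, which after using $(1-\lambda)(1/p_2-1/p_1)=1/p_2-1/q$ reduces to the equation in the statement, and the common value is $\hat s\lambda r_2=\theta_2$. The bound (\ref{r2lp2p1}) ensures $\hat s\in[1,q/2]$. No other face of $\tilde D$ carries a smaller value, so the global minimum is $\min(\theta_1,\theta_2)$ in 2a and $\min(\theta_1,\theta_2,\theta_3)$ in 2b, uniquely attained under the respective non-degeneracy hypotheses.

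The main obstacle is ruling out, by sign analyses of each $\tilde h_j$ along every candidate active-set manifold, the possibility of a smaller value arising from a non-listed balance (for instance $\tilde h_1=\tilde h_2$ with $\tilde h_4$ dominated in the interior, which one shows cannot occur for $s>1$). The hypothesis (\ref{r2lp2p1}) is exactly what makes these sign inequalities hold uniformly on $\tilde D$: it forces $\tilde h_2\le\tilde h_4$ and $\tilde h_3\le\tilde h_5$ on the relevant boundaries, places $\hat s$ in $[1,q/2]$, and makes the defining coefficient $1-r_2(1-2/q)/(1/p_2-1/p_1)$ positive. Once uniqueness of the minimizer is verified, the claimed order estimates follow directly from Theorem~\ref{main}.
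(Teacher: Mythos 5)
Your overall strategy is exactly the paper's: invoke Theorem \ref{main}, write out the piecewise--affine form of $h$ (resp.\ $\tilde h$) for $d=2$ using \eqref{r2lp2p1} to discard the dominated branch, and identify the candidate minimizers $\bigl(\tfrac{1/r_1}{1/r_1+1/r_2},\tfrac{1/r_2}{1/r_1+1/r_2},1\bigr)$, $(0,1,1)$, $(0,\hat s,\hat s)$. Part 1 and case 2a are essentially correct (modulo the immaterial misstatement that $h_1$ and $h_3$ ``cross exactly when $h_3$ is non-increasing'' --- they always cross at $\alpha_1=r_2/(r_1+r_2)$; what depends on the sign of the slope is only whether the minimum of the max sits at that crossing or at $\alpha_1=0$).

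There is, however, a concrete error in case 2b. On the edge $\alpha_1=0$ the branch that meets $\tilde h_4=\lambda r_2 s$ at $s=\hat s$ is $\tilde h_5=\mu r_2 s-\tfrac{s-1}{2}$, \emph{not} $\tilde h_3=r_2s-s/p_2+\tfrac12$: solving $\tilde h_3=\tilde h_4$ gives $s=\tfrac{1/2}{1/p_2-(1-\lambda)r_2}$, which coincides with $\hat s$ only in the borderline case $r_2=1/p_2-1/p_1$. The correct consistency relation is $(\mu-\lambda)r_2\hat s=\tfrac12(\hat s-1)$, which reduces to the statement's equation via $2(\mu-\lambda)(1/p_2-1/p_1)=1-2/q$; your own inequality $\tilde h_5\ge\tilde h_3$ (valid on all of $\tilde D$, as in the paper's parenthetical remark) shows $\tilde h_3$ is never the active competitor. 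Two further claims in your closing paragraph should be dropped or repaired: \eqref{r2lp2p1} does \emph{not} force $\tilde h_2\le\tilde h_4$ (at $(0,1,1)$ one has $\tilde h_2=r_2>\lambda r_2=\tilde h_4$; $\tilde h_2$ is a genuinely active branch in 2a), and the minimum of $\tilde h$ along the edge $\alpha_1=0$ need not equal $\theta_2$: when $r_2>\tfrac12\cdot\tfrac{1/p_2-1/q}{1/2-1/q}$ both branches increase in $s$ and the edge minimum is $\tilde h(0,1,1)=r_2$ attained at $s=1$. The final formula nevertheless survives because then $\theta_1=\tfrac{r_1r_2}{r_1+r_2}<r_2\le\theta_2$, but this case must be acknowledged for the argument to be complete.
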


\section{Auxiliary assertions}

Let $\overline{m}=(m_1, \, \dots, \, m_d)\in \N^d$. We write 
\begin{align}
\label{m_sum_def}
m = m_1+\dots + m_d,
\end{align}
$$\square_{\overline{m}}=\{k\in \Z^d:\; 2^{m_j-1}\le |k_j|<2^{m_j}, \; 1\le j\le d\},$$ ${\cal T}_{\overline{m}} = {\rm span} \{e^{i(\overline{k},\, \cdot)}\}_{\overline{k}\in \square_{\overline{m}}}$. Given $f(\cdot)= \sum \limits _{\overline{k}\in \mathaccent'27\Z^d} c_{\overline{k}}(f)e^{i(\overline{k},\, \cdot)}$, we set $$\delta_{\overline{m}} f(\cdot)= \sum \limits _{\overline{k}\in \square_{\overline{m}}} c_{\overline{k}}(f)e^{i(\overline{k},\, \cdot)}.$$

For each $f\in \mathaccent'27{\cal S}'(\mathbb{T}^d)$, we write
$$
Pf(t)= \left(\sum \limits _{\overline{m}\in \N^d}|\delta_{\overline{m}}f(t)|^2\right)^{1/2}.
$$

\begin{trma} \label{lit_pal}
{\rm (Littlewood--Paley theorem; see \cite[section 1.5.2]{nikolski_sm}, \cite[Ch. 2, section 2.3, Theorem 15]{itogi_nt}, \cite[Ch. III, section 15.2]{besov_iljin_nik}, \cite{besov_littlewood}.)} 
Let $1<q<\infty$. Then $f\in L_q(\mathbb{T}^d)$ if and only if $Pf\in L_q(\mathbb{T}^d)$; in addition, $\|f\|_{L_q(\mathbb{T}^d)} \underset{q,d}{\asymp} \|Pf\|_{L_q(\mathbb{T}^d)}$.
\end{trma}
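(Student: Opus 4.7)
The statement is the classical Littlewood--Paley theorem for dyadic blocks on the torus, and my plan is to follow the standard randomization-plus-multiplier argument rather than attempting anything new. The key tool is the combination of a Marcinkiewicz-type multiplier theorem on $\mathbb{T}^d$ with Khintchine's inequality; the role of the latter is to convert the square function $Pf$ into an expectation of sign-randomized partial sums, while the role of the former is to bound those randomized sums uniformly in $L_q$.

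First, I would introduce independent Rademacher variables $\{\varepsilon_{\overline{m}}\}_{\overline{m}\in \N^d}$ and, for any choice of signs, consider the operator
$$
T_\varepsilon f = \sum_{\overline{m}\in \N^d} \varepsilon_{\overline{m}} \, \delta_{\overline{m}} f.
$$
Because the Fourier multiplier symbol of $T_\varepsilon$ is constant on each dyadic block $\square_{\overline{m}}$ and jumps by $\pm 2$ across block boundaries, it satisfies a Marcinkiewicz-type bounded-variation condition separately in each of the $d$ coordinates. The Marcinkiewicz multiplier theorem on $\mathbb{T}^1$ (for $1<q<\infty$) then gives $L_q$-boundedness in each variable with a constant depending only on $q$; iterating the theorem in the $d$ variables (via Fubini and the standard transference from $\R$ to $\mathbb{T}$) yields $\|T_\varepsilon f\|_{L_q(\mathbb{T}^d)} \lesssim_{q,d} \|f\|_{L_q(\mathbb{T}^d)}$ with a bound independent of the signs $\varepsilon_{\overline{m}}$.

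Next, I would invoke Khintchine's inequality pointwise at each $t\in \mathbb{T}^d$ applied to the scalar sequence $a_{\overline{m}} = \delta_{\overline{m}} f(t)$:
$$
(Pf(t))^q = \Bigl(\sum_{\overline{m}} |\delta_{\overline{m}} f(t)|^2\Bigr)^{q/2} \underset{q}{\asymp} \mathbb{E}_\varepsilon \Bigl| \sum_{\overline{m}} \varepsilon_{\overline{m}} \delta_{\overline{m}} f(t) \Bigr|^q = \mathbb{E}_\varepsilon |T_\varepsilon f(t)|^q.
$$
Integrating over $\mathbb{T}^d$, swapping the order of integration by Fubini, and applying the uniform $L_q$ bound for $T_\varepsilon$ from the previous step gives $\|Pf\|_{L_q}^q \lesssim \|f\|_{L_q}^q$. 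For the reverse inequality, I would write $f = \sum_{\overline{m}} \delta_{\overline{m}} f$ (with the sum understood in $\mathaccent'27{\cal S}'(\mathbb{T}^d)$) and use duality: pairing $f$ with an arbitrary $g\in L_{q'}(\mathbb{T}^d)$ reduces to the already-proven inequality applied to $g$ in $L_{q'}$, together with Cauchy--Schwarz in the $\ell^2(\N^d)$ index.

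The main obstacle is purely technical rather than conceptual: one must verify that the Marcinkiewicz multiplier theorem really does apply to the sharp dyadic blocks $\square_{\overline{m}}$ on $\mathbb{T}^d$ (as opposed to smoothly cut-off blocks on $\R^d$). This is settled in the references cited --- \cite{nikolski_sm, itogi_nt, besov_iljin_nik, besov_littlewood} --- so in practice I would simply quote the result from one of these sources rather than reproduce the multiplier estimate. The dependence of the constants only on $q$ and $d$ is standard in all of these treatments.
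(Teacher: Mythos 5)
This statement is quoted in the paper as a classical background result (a lettered ``Theorem A'' with references to Nikol'skii, Tikhomirov, Besov--Il'in--Nikol'skii and Besov); the paper gives no proof of its own, so there is nothing internal to compare against. Your sketch is the standard randomization argument found in those sources, and it is essentially correct: Khintchine's inequality converts $Pf$ into an average of sign-randomized sums, the Marcinkiewicz-type multiplier theorem bounds each $T_\varepsilon$ uniformly in the signs, and the reverse inequality follows by duality using the blockwise orthogonality $\int f\bar g=\sum_{\overline m}\int \delta_{\overline m}f\,\overline{\delta_{\overline m}g}$ together with Cauchy--Schwarz in $\ell^2(\N^d)$. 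Two small technical points deserve care. First, ``iterating the one-dimensional Marcinkiewicz theorem via Fubini'' is not literally available, since $T_\varepsilon$ is not a tensor product of one-variable multipliers; what one actually invokes is the $d$-dimensional Marcinkiewicz theorem, whose hypothesis (uniform bounds on the mixed variations over dyadic boxes) is trivially satisfied because the symbol is constant on each $\square_{\overline m}$ --- this is exactly the form of the theorem in the cited references, so quoting it is legitimate. Second, the ``if and only if'' clause (that $Pf\in L_q$ forces $f\in L_q$) needs the extra observation that the uniform two-sided bound on finite partial sums, applied to differences of partial sums, makes $\sum_{\overline m}\delta_{\overline m}f$ Cauchy in $L_q$ whenever $Pf\in L_q$; your norm equivalence alone, stated only for $f$ already in $L_q$, does not quite give this direction. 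Both gaps are routine and standard, so the proposal is acceptable as a reconstruction of the cited classical proof.
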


\begin{trma}
\label{dif_r} 
Let $1<p_j<\infty$, $r_j\in \R$. Then $$\|\partial _j^{r_j} f\|_{L_{p_j}(\mathbb{T}^d)} \underset{\overline{p},\overline{r},d}{\asymp} 2^{m_jr_j} \|x\|_{L_{p_j}(\mathbb{T}^d)}$$ for each $f\in {\cal T}_{\overline{m}}$.
\end{trma}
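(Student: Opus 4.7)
The plan is to realize the operator $\partial_j^{r_j}$ restricted to $\mathcal{T}_{\overline{m}}$ as a one-variable periodic convolution $f\mapsto K_{m_j}*_j f$ (convolution in $x_j$ only) with a kernel whose $L_1(\mathbb{T})$-norm is comparable to $2^{m_jr_j}$; both directions of the inequality then follow by slicewise Young. This is a standard Bernstein-type / Marcinkiewicz multiplier estimate; the only small subtlety is that $(it)^{r_j}=|t|^{r_j}e^{\mathrm{sgn}(t)\cdot i\pi r_j/2}$ is not smooth at $0$ for non-integer $r_j$, so the cutoff used in the multiplier must vanish near the origin.

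Pick $\varphi\in C^\infty(\mathbb{R})$ with $\varphi\equiv 1$ on $\{1/2\le|t|\le 1\}$ and $\supp \varphi\subset\{1/4\le|t|\le 2\}$. On each component of $\supp \varphi$, $\mathrm{sgn}\,t$ is constant, so $\psi(t):=(it)^{r_j}\varphi(t)$ belongs to $C_c^\infty(\mathbb{R})$. Define
$$
K_{m_j}(x_j)=\sum_{k_j\in\Z}(ik_j)^{r_j}\varphi(k_j/2^{m_j})\,e^{ik_jx_j}.
$$
Since $\varphi(k_j/2^{m_j})=1$ for $\overline{k}\in\square_{\overline{m}}$, for every $f\in\mathcal{T}_{\overline{m}}$ we have
$$
(K_{m_j}*_j f)(x)=\sum_{\overline{k}\in\square_{\overline{m}}}(ik_j)^{r_j}c_{\overline{k}}(f)\,e^{i(\overline{k},\,x)}=\partial_j^{r_j}f(x).
$$
Applying Young's inequality in $x_j$ for each fixed tuple of the remaining variables and then Fubini yields
$$
\|\partial_j^{r_j}f\|_{L_{p_j}(\mathbb{T}^d)}\le\|K_{m_j}\|_{L_1(\mathbb{T})}\,\|f\|_{L_{p_j}(\mathbb{T}^d)}.
$$

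The main step is the kernel bound $\|K_{m_j}\|_{L_1(\mathbb{T})}\underset{r_j}{\lesssim}2^{m_jr_j}$. After the rescaling $k_j=2^{m_j}t$, $K_{m_j}$ is (the periodization of) a rescaled Schwartz-type function built from $\psi$; standard integration by parts gives $|K_{m_j}(x_j)|\underset{r_j,N}{\lesssim}2^{m_j(r_j+1)}(1+2^{m_j}|x_j|)^{-N}$ on $[-\pi,\pi]$ for any $N\in\N$, and integrating in $x_j$ produces the claim. For the reverse inequality, run the same argument with symbol $(it)^{-r_j}\varphi(t)$ in place of $(it)^{r_j}\varphi(t)$: the corresponding kernel $\widetilde K_{m_j}$ inverts $\partial_j^{r_j}$ on $\mathcal{T}_{\overline{m}}$ and satisfies $\|\widetilde K_{m_j}\|_{L_1(\mathbb{T})}\underset{r_j}{\lesssim}2^{-m_jr_j}$, giving $\|f\|_{L_{p_j}(\mathbb{T}^d)}\underset{r_j}{\lesssim}2^{-m_jr_j}\|\partial_j^{r_j}f\|_{L_{p_j}(\mathbb{T}^d)}$. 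The only thing to be careful about is the non-smoothness of $(it)^{\pm r_j}$ at $t=0$, but the support of $\varphi$ stays away from the origin, so this causes no trouble; all implicit constants depend only on $r_j$ (through finitely many derivatives of $\varphi$ and $\psi$) and $d$, and in particular are independent of $\overline{m}$, $\overline{p}$ and $f$.
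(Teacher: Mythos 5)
Your argument is correct. The paper itself gives no proof of this theorem: it simply cites the Marcinkiewicz multiplier theorem (and Theorem 18 of Ch.~2 of the Tikhomirov survey for $r_j\ge 0$), under which one would verify the dyadic bounded-variation conditions for the symbol $2^{-m_jr_j}(ik_j)^{r_j}$ uniformly in $m_j$ and use that $1<p_j<\infty$. Your route is more elementary and self-contained: realizing $\partial_j^{r_j}|_{\mathcal T_{\overline m}}$ and its inverse as one-variable convolutions with kernels built from the compactly supported smooth symbols $(it)^{\pm r_j}\varphi(t)$, and then using slicewise Young. The key points are all in order: $\varphi\equiv 1$ on $1/2\le|t|\le 1$ covers the frequencies $2^{m_j-1}\le|k_j|<2^{m_j}$ of $\square_{\overline m}$; the support of $\varphi$ avoids the origin, so the non-smoothness of $(it)^{\pm r_j}$ at $t=0$ is irrelevant and $\mathrm{sgn}\,t$ is locally constant there; the pointwise kernel bound $|K_{m_j}(x_j)|\lesssim 2^{m_j(r_j+1)}(1+2^{m_j}|x_j|)^{-N}$ follows from summation by parts (or Poisson summation) since the rescaled symbol is $C_c^\infty$, giving $\|K_{m_j}\|_{L_1(\mathbb T)}\lesssim 2^{m_jr_j}$; and the reverse direction is the same estimate for the inverse symbol. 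What your approach buys, beyond being self-contained, is that the constants are independent of $p_j$ and the estimate holds for all $1\le p_j\le\infty$, which is stronger than what the theorem (or the Marcinkiewicz route) requires; what the citation buys the author is brevity.
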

This estimate follows from Marcinkiewicz multiplier theorem \cite[section 1.5.3]{nikolski_sm}, \cite[Ch. III, section 15.3]{besov_iljin_nik}; see also \cite[Ch. 2, section 2.3, Theorem 18]{itogi_nt} for $r_j \ge 0$.

\begin{trma}
\label{fin_dim_isom} {\rm (see \cite{galeev2}, Theorem B; \cite[Vol. 2, Ch. X, Theorem 7.5]{zigmund}).} There is an isomorphism $A:{\cal T}_{\overline{m}} \rightarrow \R^{2^m}$ such that for all $q\in (1, \, \infty)$, $f\in {\cal T}_{\overline{m}}$ the estimate $$\|f\|_{L_q(\mathbb{T}^d)} \underset{q, \, d}{\asymp} 2^{-m/q} \|Ax\|_{l_q^{2^m}}$$ holds.
\end{trma}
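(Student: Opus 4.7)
Since $|\square_{\overline{m}}| = \prod_{j=1}^d 2^{m_j} = 2^m$, both $\mathcal{T}_{\overline{m}}$ and $\R^{2^m}$ have the same dimension, so any linear bijection $A$ is automatically a vector-space isomorphism; the substantive task is to pick one that intertwines $\|\cdot\|_{L_q(\mathbb{T}^d)}$ with $2^{-m/q}\|\cdot\|_{\ell_q^{2^m}}$ for every $q\in(1,\infty)$, with constants depending only on $q$ and $d$. The natural candidate is a sampling operator on a dyadic grid of cardinality exactly $2^m$.

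I would take $\overline{N}=(2^{m_1},\dots,2^{m_d})$, use the half-shifted grid
$$X = \{x_\alpha: \alpha\in \textstyle\prod_{j=1}^d\{0,\dots,2^{m_j}-1\}\}, \qquad x_{\alpha,j} = \frac{2\pi(\alpha_j+1/2)}{2^{m_j}},$$
and set $Af = (f(x_\alpha))_\alpha$. For injectivity, two characters $e^{i(k,\cdot)}$, $e^{i(k',\cdot)}$ agree on $X$ iff $k_j\equiv k'_j \pmod{2^{m_j}}$ for every $j$ \emph{and} the accumulated half-shift phase $e^{i\pi\sum_j(k_j-k'_j)/2^{m_j}}$ equals $1$. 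A quick inspection of $\square_{\overline{m}}$ shows the only way the coordinate congruences can hold simultaneously for $k,k'\in\square_{\overline{m}}$ is $\{k_j,k'_j\}=\{-2^{m_j-1},2^{m_j-1}\}$ on some subset $S$ of coordinates; then the phase equals $(-1)^{|S|}\ne 1$ unless $S=\varnothing$, forcing $k=k'$. Hence $A$ is bijective.

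For the norm equivalence I would split $f = \sum_{\varepsilon\in\{-1,+1\}^d} f_\varepsilon$ by sign-orthants of the Fourier support and modulate: $\tilde f_\varepsilon(x) = e^{-i(\overline{n}_\varepsilon,x)} f_\varepsilon(x)$ with $n_{\varepsilon,j}=\varepsilon_j 2^{m_j-1}$. Each $\tilde f_\varepsilon$ has coordinate degree at most $2^{m_j-1}-1$, so the multivariate Marcinkiewicz--Zygmund inequality (with the comfortable ratio $2$ of grid size to degree) gives
$$\|\tilde f_\varepsilon\|_{L_q(\mathbb{T}^d)}^q \underset{q,d}{\asymp} 2^{-m}\sum_\alpha|\tilde f_\varepsilon(x_\alpha)|^q = 2^{-m}\sum_\alpha|f_\varepsilon(x_\alpha)|^q.$$
By the Marcinkiewicz multiplier theorem (the same input that yields Theorem \ref{dif_r}), the orthant projections are uniformly bounded on $L_q(\mathbb{T}^d)$, so $\|f\|_{L_q}^q \underset{q,d}{\asymp}\sum_\varepsilon\|f_\varepsilon\|_{L_q}^q$; a parallel discrete Marcinkiewicz-multiplier estimate on the finite abelian group $\prod_j\Z/2^{m_j}\Z$ shows $\sum_\alpha|f(x_\alpha)|^q \underset{q,d}{\asymp}\sum_\varepsilon\sum_\alpha|f_\varepsilon(x_\alpha)|^q$, closing the chain.

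The main obstacle is precisely the exact dimension matching $|X|=2^m$: a direct Marcinkiewicz--Zygmund argument applied to $\mathcal{T}_{\overline{m}}$ (whose coordinate degree is $2^{m_j}-1$) would force $|X|\asymp 2^{m+d}$ rather than $2^m$. The two ingredients that remove this blow-up are the orthant decomposition, which halves the effective per-variable degree, and the half-integer shift in the grid, which breaks the sole remaining character collision $k_j=-2^{m_j-1}\leftrightarrow k_j=2^{m_j-1}$ via the opposite phases $\pm i$. The delicate point to justify carefully is that both the continuous and the discrete multiplier constants depend only on $q$ and $d$, uniformly in $\overline{m}$; this is where the cited results of Galeev and Zygmund do the work.
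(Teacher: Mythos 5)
The paper does not prove this statement --- it is imported from Galeev and Zygmund --- so the only question is whether your construction actually works. It does not: the injectivity argument has a genuine error. You claim that when two frequencies $k,k'\in\square_{\overline{m}}$ satisfy $k_j\equiv k_j'\pmod{2^{m_j}}$ for all $j$, with $\{k_j,k_j'\}=\{-2^{m_j-1},2^{m_j-1}\}$ on a set $S$ of coordinates, the accumulated phase $(-1)^{|S|}$ differs from $1$ unless $S=\varnothing$. That is false whenever $|S|$ is even and nonzero. Concretely, for $d=2$ take $k=(2^{m_1-1},2^{m_2-1})$ and $k'=(-2^{m_1-1},-2^{m_2-1})$; both lie in $\square_{\overline{m}}$, and at every grid point $x_\alpha$ the ratio of the two characters is $e^{i2\pi(\alpha_1+1/2)}e^{i2\pi(\alpha_2+1/2)}=(-1)(-1)=1$. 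So $e^{i(k,\cdot)}-e^{i(k',\cdot)}$ is a nonzero element of ${\cal T}_{\overline{m}}$ in the kernel of your sampling map, and $A$ is not an isomorphism for any $d\ge 2$ (which is exactly the range of $d$ the paper needs). The half-integer shift kills the single collision in one variable, but in several variables the $\pm$ phases multiply and can cancel; this is precisely the "exact dimension matching" obstacle you flagged, and it is not overcome by a single common grid.

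The repair is to push your orthant decomposition into the construction of $A$ itself rather than using it only for the norm equivalence. Write $f=\sum_{\varepsilon\in\{-1,1\}^d}f_\varepsilon$ and modulate each piece so that its spectrum sits in a box of side $2^{m_j-1}$ in each coordinate; each $\tilde f_\varepsilon$ is then faithfully and stably sampled on its \emph{own} grid of $\prod_j 2^{m_j-1}=2^{m-d}$ points (a genuine DFT in each orthant), and $Af$ is defined as the concatenation of these $2^d$ blocks, giving total dimension $2^d\cdot 2^{m-d}=2^m$. Combining the per-orthant Marcinkiewicz--Zygmund equivalences with the uniform $L_q$-boundedness of the $2^d$ orthant projections (Marcinkiewicz multiplier theorem, constants depending only on $q$ and $d$) then yields $\|f\|_{L_q}^q\asymp\sum_\varepsilon\|f_\varepsilon\|_{L_q}^q\asymp 2^{-m}\|Af\|_{\ell_q^{2^m}}^q$ as required; your final step invoking a discrete multiplier estimate on $\prod_j\Z/2^{m_j}\Z$ becomes unnecessary. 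As written, however, the proposal does not establish the theorem.
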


The estimates for the widths $d_n(B_p^N, \, l_q^N)$ were obtained by Pietsch, Stesin, Kashin, Gluskin and Garnaev \cite{gluskin1, bib_gluskin, garn_glus, bib_kashin, pietsch1, stesin}. Let us formulate the results for the cases that will be considered below.

\begin{trma}
\label{glus} {\rm (see \cite{bib_gluskin}).} Let $1\le p\le q<\infty$,
$0\le n\le N/2$.
\begin{enumerate}
\item Let $1\le q\le 2$. Then $d_n(B_p^N, \, l_q^N) \asymp
1$.

\item Let $2<q<\infty$, $\omega_{pq} =\min \left\{1, \,
\frac{1/p-1/q}{1/2-1/q}\right\}$. Then
$$
d_n(B_p^N, \, l_q^N) \underset{q}{\asymp} \min \{1, \,
n^{-1/2}N^{1/q}\} ^{\omega_{pq}}.
$$
\end{enumerate}
\end{trma}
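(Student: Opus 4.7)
My plan is to split the statement into the two regimes $1\le q\le 2$ and $2<q<\infty$ and prove matching upper and lower bounds in each, using the standard finite-dimensional toolbox: embedding inequalities, coordinate projections, Kashin's random-subspace theorem, and volume/section arguments.

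For $1\le q\le 2$, the upper bound is immediate: since $p\le q$ yields $\|x\|_{l_q^N}\le\|x\|_{l_p^N}$ on $\R^N$, the choice $L=\{0\}$ already gives $d_n(B_p^N,l_q^N)\le \sup_{x\in B_p^N}\|x\|_q\le 1$. For the lower bound I would use that $e_i\in B_p^N$ together with the Pythagorean identity $\sum_{i=1}^N \dist(e_i,L)_2^2=N-\dim L\ge N/2$, valid for any subspace $L\subset\R^N$ with $\dim L\le N/2$. Combining with the pointwise inequality $\|\cdot\|_{l_q^N}\ge\|\cdot\|_{l_2^N}$ (which holds for $q\le 2$) forces $\max_i\dist(e_i,L)_q\gtrsim 1$, whence $d_n(B_p^N,l_q^N)\gtrsim 1$.

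For $2<q<\infty$ I would first establish the case $p=2$, where $\omega_{pq}=1$. The non-trivial upper bound $d_n(B_2^N,l_q^N)\lesssim n^{-1/2}N^{1/q}$ (in the regime $n\ge N^{2/q}$) follows from Kashin's theorem: a Haar-random $n$-dimensional subspace $L\subset\R^N$ carries an orthogonal projection $\pi_L$ satisfying $\|(I-\pi_L)x\|_q\lesssim n^{-1/2}N^{1/q}\|x\|_2$ uniformly on $\R^N$, by standard concentration of $\|\cdot\|_q$ on the Euclidean sphere. Extending to general $p$ is done by interpolating between this bound and the trivial bound $d_n\le 1$, weighted so as to produce the exponent $\omega_{pq}$; the key pointwise input is a three-exponent Holder inequality $\|x\|_q\le\|x\|_p^{\alpha}\|x\|_s^{1-\alpha}$ with a suitably chosen endpoint $s\in\{2,\infty\}$, together with the embedding $B_p^N\subset N^{1/2-1/p}B_2^N$ for $p\ge 2$.

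The matching lower bound for $2<q<\infty$ is the hard part and is where Gluskin's original argument is essential. The strategy is volumetric: for an arbitrary $n$-dimensional $L$, choose a random $(N-n)$-dimensional $E$ transversal to $L$, and bound below the $\ell_q$-diameter of the section $E\cap B_p^N$ via a volume ratio between $B_p^N$ and $B_q^N$ on a typical section; any witness in this section has $\ell_q$-distance to $L$ comparable to the diameter. The main obstacle is producing the precise exponent $\omega_{pq}$: this requires Gluskin's probabilistic estimates on volumes of random sections of $\ell_p$-balls (of Dvoretzky type), carefully calibrated so as to reproduce the transition between the $p\le 2$ and $p\ge 2$ regimes.
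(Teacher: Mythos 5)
This is Theorem~D in the paper: it is quoted from Gluskin's 1984 paper \cite{bib_gluskin} and the author gives no proof of it, so there is no internal argument to compare with --- you are attempting to reprove a deep classical result from scratch. Your treatment of the case $1\le q\le 2$ is correct and standard (the trace identity $\sum_i\dist_2(e_i,L)^2=N-\dim L$ plus $\|\cdot\|_q\ge\|\cdot\|_2$ gives the lower bound, and $d_0\le 1$ the upper). The case $2<q<\infty$, however, contains a genuine error at its key step. You claim that for a random $n$-dimensional subspace $L$ the \emph{orthogonal} projection satisfies $\|(I-\pi_L)x\|_q\lesssim n^{-1/2}N^{1/q}\|x\|_2$. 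This is false for \emph{every} subspace $L$ of dimension $n\le N/2$: since $\mathrm{tr}(I-\pi_L)=N-n\ge N/2$, some diagonal entry of $I-\pi_L$ is at least $1/2$, hence for that index $i$ one has $\|(I-\pi_L)e_i\|_q\ge |\langle (I-\pi_L)e_i,e_i\rangle|\ge 1/2$ while $\|e_i\|_2=1$. So $\sup_{x\in B_2^N}\|(I-\pi_L)x\|_q\ge 1/2$, which cannot be $\lesssim n^{-1/2}N^{1/q}$ once $n\gg N^{2/q}$ (e.g.\ $n=N/2$, $q=4$, where the width is $\asymp N^{-1/4}$). Equivalently, no subspace of proportional dimension is a one-sided Euclidean section of $B_q^N$ at the level $N^{1/q-1/2}$; the near-optimal approximant in $l_q$ is not the $l_2$-orthogonal projection. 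The actual upper bound requires Gluskin's random-matrix construction (or duality with the Gelfand widths $d^n(B_{q'}^N,l_2^N)$ and a null-space-property argument), and your subsequent interpolation to general $p$ inherits this gap.

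In addition, your lower bound for $q>2$ is only a programmatic sketch: you explicitly defer the decisive step ("producing the precise exponent $\omega_{pq}$") to "Gluskin's probabilistic estimates", which is the entire content of the theorem in that regime. The route the literature (and this paper, via Theorem~F) takes is to reduce to lower bounds for the sets $V_k$: one uses $k^{-1/p}V_k\subset B_p^N$ and the estimate $d_n(V_k,l_q^N)\gtrsim k^{1/2}n^{-1/2}N^{1/q}$ with $k\asymp(n^{1/2}N^{-1/q})^{1/(1/2-1/q)}$, which produces the exponent $\omega_{pq}$ exactly; a naive volume-ratio bound on random sections does not. You also do not address the lower bound $d_n\gtrsim 1$ in the regime $n\le N^{2/q}$, where the $q\le 2$ trace argument no longer applies. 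As it stands, neither direction of part~2 is established.
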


\begin{trma}
\label{p_s} {\rm (see \cite{pietsch1, stesin}).} Let $1\le q\le p\le
\infty$, $0\le n\le N$. Then
$$
d_n(B_p^N, \, l_q^N) = (N-n)^{1/q-1/p}.
$$
\end{trma}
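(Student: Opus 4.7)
The theorem splits into matching upper and lower bounds on $d_n(B_p^N,l_q^N)$ that must agree \emph{exactly} (not just in order), so each direction needs to be sharp.

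\textbf{Upper bound.} My plan is to take $L_n:=\mathrm{span}(e_1,\dots,e_n)$. For $x\in B_p^N$, set $y:=\sum_{i=1}^n x_i e_i\in L_n$; then $x-y$ is supported on at most $N-n$ coordinates, so the monotonicity of normalized $l_r$-means --- an immediate consequence of H\"older's inequality with exponents $p/q$ and its conjugate, valid because $q\le p$ --- gives
\[
\|x-y\|_q\le(N-n)^{1/q-1/p}\|x-y\|_p\le(N-n)^{1/q-1/p}\|x\|_p\le(N-n)^{1/q-1/p}.
\]
Passing to the supremum in $x$ and the infimum in $L$ yields $d_n(B_p^N,l_q^N)\le(N-n)^{1/q-1/p}$.

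\textbf{Lower bound.} This is the substantive direction. Fix an arbitrary $n$-dimensional $L\subset\R^N$ and apply Hahn--Banach to write
\[
\inf_{y\in L}\|x-y\|_q=\sup\{\langle m,x\rangle:m\in L^{\perp},\,\|m\|_{q'}\le 1\},
\]
where $L^{\perp}$ is the annihilator of dimension $k:=N-n$ and $q'$ is the H\"older conjugate of $q$. Taking the supremum in $x\in B_p^N$ and using $\sup_{x\in B_p^N}\langle m,x\rangle=\|m\|_{p'}$ reduces the claim to: for every $k$-dimensional $M\subset\R^N$ there exists $m\in M$ with $\|m\|_{p'}/\|m\|_{q'}\ge k^{1/p'-1/q'}$, the exponent being nonnegative since $1/q-1/p=1/p'-1/q'\ge 0$. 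To produce such $m$ I would use rank--nullity to locate a $k$-subset $S\subset\{1,\dots,N\}$ on which the coordinate projection $P_S|_M\colon M\to\R^S$ is an isomorphism, then consider the preimages $m^{\sigma}\in M$ of all sign vectors $\sigma\in\{\pm 1\}^S$. Averaging $\|m^{\sigma}\|_{p'}^{p'}$ and $\|m^{\sigma}\|_{q'}^{q'}$ over Rademacher-uniform $\sigma$ and comparing should isolate a sign pattern realizing the required ratio.

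\textbf{Main obstacle.} The off-support entries $(m^{\sigma}_j)_{j\notin S}$ contribute to both norms, and the na\"\i ve H\"older estimate $\|m\|_{p'}\le|\mathrm{supp}(m)|^{1/p'-1/q'}\|m\|_{q'}$ runs the \emph{wrong} way, so the sign-averaging step must cancel cross-contributions by exploiting the convexity gap between $\|\cdot\|_{p'}^{p'}$ and $\|\cdot\|_{q'}^{q'}$. This is where I expect to spend most of the effort and is the crux of the classical Pietsch--Stesin argument. As a backup, a Borsuk--Ulam argument applied to the odd $l_q$-metric projection map $S_p^{N-1}\to L$ would locate $x$ whose best $L$-approximation is the origin, and combining this with the choice of a suitable MDS/Reed--Solomon type coordinate structure might close the gap more directly.
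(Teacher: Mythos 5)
The paper offers no proof of this theorem --- it is quoted from Pietsch and Stesin --- so your argument has to stand on its own. Your upper bound is complete and correct, and your duality reduction of the lower bound is also correct: it suffices to show that every subspace $M\subset\R^N$ of dimension $k=N-n$ contains $m\neq 0$ with $\|m\|_{p'}\ge k^{1/p'-1/q'}\|m\|_{q'}$ (note $p'\le q'$). The genuine gap is exactly where you flag it: the sign-averaging step cannot produce this $m$. Take $N=3$, $k=2$, $M=\mathrm{span}(e_1+Te_3,\;e_2+Te_3)$ with $T$ large and $S=\{1,2\}$. The required vector exists (it is $e_1-e_2$, the preimage of the pattern $(1,-1)$), but both averages $\mathbb{E}_\sigma\|m^\sigma\|_{p'}^{p'}$ and $\mathbb{E}_\sigma\|m^\sigma\|_{q'}^{q'}$ are dominated by the $T$-terms contributed by the patterns $\pm(1,1)$, so every comparison of these expectations degenerates to ratio $1$ as $T\to\infty$ and cannot certify the factor $k^{1-p'/q'}=2^{1-p'/q'}>1$. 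In other words, the inequality between averages that your scheme would need is simply false for this $M$, even though the conclusion holds; averaging over all sign patterns cannot isolate the one good direction.

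The classical repair is an extreme-point selection rather than an averaging one. Let $m$ be an extreme point of the $k$-dimensional symmetric convex body $E=M\cap B_\infty^N$; then $\|m\|_\infty=1$, and if the set $S=\{i:\ |m_i|=1\}$ had fewer than $k$ elements, the subspace $\{v\in M:\ v_i=0,\ i\in S\}$ would be nontrivial and $m\pm\varepsilon v$ would lie in $E$ for small $\varepsilon$, contradicting extremality; hence $|S|\ge k$. Since $|m_i|\le 1$ and $p'\le q'$, we get $\|m\|_{p'}^{p'}\ge\|m\|_{q'}^{q'}\ge k$, and therefore
$\|m\|_{p'}^{p'}\ \ge\ \|m\|_{q'}^{p'}\,\|m\|_{q'}^{q'-p'}\ \ge\ k^{1-p'/q'}\,\|m\|_{q'}^{p'}$,
which is precisely $\|m\|_{p'}\ge k^{1/p'-1/q'}\|m\|_{q'}$ (with the obvious reading when $q'=\infty$). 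Substituting this for your averaging step closes the lower bound exactly, and the Borsuk--Ulam backup is not needed.
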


Order estimates for the Kolmogorov $n$-widths of an intersection of a family of $N$-dimensional balls were obtained in \cite{galeev1} for $N=2n$ and in \cite{vas_ball_inters} for $N\ge 2n$. In \cite{vas_ball_inters}, the explicit formula for the order estimate was written under an additional condition when no ball of the family contains another one. In \cite[Proposition 1]{vas_sob} the estimate was written in a general case for a finite family of balls. Let us formulate this result.

\begin{trma}
\label{fin_dim_inters} {\rm (see \cite[Proposition 1]{vas_sob}).}
Let $A$ be a finite nonempty set, $1\le p_\alpha\le \infty$, $\nu_\alpha>0$, $\alpha \in A$,
\begin{align}
\label{m_set_def}
M_0 = \cap _{\alpha \in A} \nu _\alpha B_{p_\alpha}^N, 
\end{align}
$N\ge 2n$. We define the numbers $\lambda_{\alpha,\beta}$ and $\tilde \lambda_{\alpha,\beta}$ by the equations
\begin{align}
\label{q_l_ab}
\frac{1}{q}= \frac{1-\lambda_{\alpha,\beta}}{p_\alpha} + \frac{\lambda_{\alpha,\beta}}{p_\beta}, \quad p_\alpha > q, \; p_\beta < q,
\end{align}
\begin{align}
\label{2_tl_ab}
\frac{1}{2}= \frac{1-\tilde\lambda_{\alpha,\beta}}{p_\alpha} + \frac{\tilde\lambda_{\alpha,\beta}}{p_\beta}, \quad p_\alpha > 2, \; p_\beta < 2.
\end{align}
Then, for $q\le 2$,
\begin{align}
\label{dnm0}
d_n(M_0, \, l_q^N) \asymp \min \left\{ \min _{\alpha \in A}d_n(\nu_\alpha B_{p_\alpha}^N, \, l_q^N), \, \min _{p_\alpha>q, \, p_\beta< q} \nu_\alpha ^{1-\lambda_{\alpha,\beta}}\nu_\beta^{\lambda_{\alpha,\beta}}\right\};
\end{align}
for $q>2$,
\begin{align}
\label{dnm0q2}
\begin{array}{c}
d_n(M_0, \, l_q^N) \underset{q}{\asymp} \min \left\{ \min _{\alpha \in A}d_n(\nu_\alpha B_{p_\alpha}^N, \, l_q^N), \, \min _{p_\alpha>q, \, p_\beta< q} \nu_\alpha ^{1-\lambda_{\alpha,\beta}}\nu_\beta^{\lambda_{\alpha,\beta}},\right. \\ \left. \min _{p_\alpha> 2, \, p_\beta< 2} \nu_\alpha ^{1-\tilde\lambda_{\alpha,\beta}}\nu_\beta^{\tilde\lambda_{\alpha,\beta}}d_n(B_2^N, \, l_q^N)\right\}.
\end{array}
\end{align}
\end{trma}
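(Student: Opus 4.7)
For each individual $\alpha \in A$, the inclusion $M_0 \subset \nu_\alpha B_{p_\alpha}^N$ immediately gives $d_n(M_0, l_q^N) \le d_n(\nu_\alpha B_{p_\alpha}^N, l_q^N)$. For each pair $p_\alpha > q > p_\beta$, log-convexity of the $\ell_p$-norms yields $\|x\|_q \le \|x\|_{p_\alpha}^{1-\lambda_{\alpha,\beta}} \|x\|_{p_\beta}^{\lambda_{\alpha,\beta}}$ for every $x \in \R^N$, so $M_0 \subset \nu_\alpha^{1-\lambda_{\alpha,\beta}} \nu_\beta^{\lambda_{\alpha,\beta}} B_q^N$ and consequently $d_n(M_0, l_q^N) \le \nu_\alpha^{1-\lambda_{\alpha,\beta}} \nu_\beta^{\lambda_{\alpha,\beta}}$. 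When $q > 2$ and $p_\alpha > 2 > p_\beta$, interpolating instead through the $\ell_2$-norm yields $M_0 \subset \nu_\alpha^{1-\tilde\lambda_{\alpha,\beta}} \nu_\beta^{\tilde\lambda_{\alpha,\beta}} B_2^N$, whence $d_n(M_0, l_q^N) \le \nu_\alpha^{1-\tilde\lambda_{\alpha,\beta}} \nu_\beta^{\tilde\lambda_{\alpha,\beta}} d_n(B_2^N, l_q^N)$. Taking the minimum over all these bounds produces the right-hand sides of \eqref{dnm0} and \eqref{dnm0q2}.

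\textbf{Lower bound via coordinate restrictions.} For every $E \subset \{1, \ldots, N\}$ with $|E| = M$ and every $p \in [1, \infty]$, I would embed a scaled ball $\rho B_p^M$ (identified with the corresponding subset of $\R^N$ supported on $E$) into $M_0$. Since the coordinate projection $\R^N \to \R^E$ is a norm-one map from $l_q^N$ onto $l_q^M$, any approximating $n$-dimensional subspace projects to one of no larger dimension, giving $d_n(M_0, l_q^N) \ge d_n(\rho B_p^M, l_q^M) = \rho \, d_n(B_p^M, l_q^M)$. The inclusion $\rho B_p^M \subset \nu_\alpha B_{p_\alpha}^M$ for every $\alpha$ forces $\rho \le \min_\alpha \nu_\alpha M^{-(1/p_\alpha - 1/p)_+}$, and one then invokes Theorems~\ref{glus} and \ref{p_s} for $d_n(B_p^M, l_q^M)$, optimizing over $M \in [2n, N]$ and $p$.

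\textbf{Matching each term in the minimum.} The interpolation term $\nu_\alpha^{1-\lambda_{\alpha,\beta}} \nu_\beta^{\lambda_{\alpha,\beta}}$ (when $p_\alpha > q > p_\beta$) is recovered by taking $p = \infty$ and choosing $M$ so that the two active constraints balance, $\nu_\alpha M^{-1/p_\alpha} = \nu_\beta M^{-1/p_\beta}$; a direct computation using the definition of $\lambda_{\alpha,\beta}$ gives $\rho\, d_n(B_\infty^M, l_q^M) \asymp \rho M^{1/q} \asymp \nu_\alpha^{1-\lambda_{\alpha,\beta}} \nu_\beta^{\lambda_{\alpha,\beta}}$. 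The single-ball term $d_n(\nu_{\alpha_0} B_{p_{\alpha_0}}^N, l_q^N)$ is recovered by taking $M = N$ with $p = p_{\alpha_0}$ in the regime where the constraint at $\alpha_0$ is the binding one. For $q > 2$, the third family of terms is recovered analogously with $p = 2$ and $M \asymp N$, balancing a pair $p_\alpha > 2 > p_\beta$ and using the Gluskin estimate (Theorem~\ref{glus}) together with the observation $d_n(B_2^M, l_q^M) \asymp d_n(B_2^N, l_q^N)$ when $M \asymp N$.

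\textbf{Main obstacle.} The combinatorial core of the proof is checking that the supremum over $(M, p, E)$ of the constructed lower bounds matches, up to $q$-dependent constants, the minimum appearing on the right-hand side of \eqref{dnm0}--\eqref{dnm0q2}. This requires partitioning the parameter space $\{(\nu_\alpha, p_\alpha)\}$ according to which term in the min is the smallest and exhibiting, for each region, an explicit $(M, p)$ that makes the resulting lower bound tight. The case $q > 2$ is the most delicate, since three competing families of terms interact through the comparisons of the $p_\alpha$ with both $q$ and $2$, and one has to handle separately the boundary regime where the optimal balancing $M$ falls outside $[2n, N]$; in that regime the minimum is attained by an endpoint single-ball term rather than by an interior balance.
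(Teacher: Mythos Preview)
The upper bound is correct and standard.

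Your lower-bound strategy via coordinate restrictions has a genuine gap. When you embed $\rho B_p^M$ supported on a coordinate set $E$ of size $M$, the width $d_n(\rho B_p^M, l_q^M)$ vanishes once $M\le n$, so your method is confined to $M\ge 2n$. But the ``balanced'' value you need for the interpolation term $\nu_\alpha^{1-\lambda_{\alpha,\beta}}\nu_\beta^{\lambda_{\alpha,\beta}}$ is the number $l$ defined by $\nu_\alpha/\nu_\beta = l^{1/p_\alpha-1/p_\beta}$, and this $l$ may lie anywhere in $[1,N]$. Your closing claim that when the balance falls outside $[2n,N]$ ``the minimum is attained by an endpoint single-ball term'' is false at the lower endpoint. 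With $q\le 2$, just two balls $p_\alpha>q>p_\beta$, and parameters chosen so that $1\le l\ll n$, one checks directly that the interpolation term equals $\nu_\beta\, l^{1/q-1/p_\beta}$ and is strictly smaller than both single-ball terms $\nu_\alpha N^{1/q-1/p_\alpha}$ and $\nu_\beta$; yet the best your construction with $M\ge 2n$ (and any $p$) can produce is $\nu_\beta (2n)^{1/q-1/p_\beta}$, which falls short by the unbounded factor $(2n/l)^{1/p_\beta-1/q}$.

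The paper does not prove Theorem~\ref{fin_dim_inters} itself (it is quoted from \cite{vas_sob}), but the lower-bound technique is displayed in Lemmas~\ref{expl1} and~\ref{expl2}, and the missing ingredient is visible there: instead of a coordinate-restricted cube one embeds into $M_0$ a multiple of the body $V_k\subset\R^N$, the convex hull of all $\{0,\pm1\}$-vectors with exactly $k$ nonzero entries. Because $V_k$ is spread over all $N$ coordinates rather than confined to a coordinate subspace, the Gluskin--Malykhin--Rjutin estimate $d_n(V_k, l_q^N)\gtrsim k^{1/q}$ of \eqref{gl_q_l_2} (and the two-regime bound \eqref{kq1} for $q>2$) holds for \emph{every} $1\le k\le N$ once $n\le N/2$, including $k\ll n$. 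Taking $k=\lceil l\rceil$ then yields the correct lower bound regardless of how $l$ compares with $n$. The $\tilde\lambda$-terms for $q>2$ are handled by the second branch of \eqref{kq1} in the same way; see case~5 of Lemma~\ref{expl2}.
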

In the formulation of Proposition 1 from \cite{vas_sob} the inequalities $p_\alpha> q$, $p_\beta< q$, $p_\alpha> 2$, $p_\beta< 2$ from \eqref{dnm0}, \eqref{dnm0q2} were replaced by nonstrict ones; the estimate is the same.

Given $k\in \{1,\, \dots, \, N\}$, we define the sets $V_k\subset \R^N$ by 
$$
V_k={\rm conv}\, \{(\varepsilon_1 \hat{x}_{\sigma(1)}, \, \dots,
\, \varepsilon_N \hat{x}_{\sigma(N)}):\; \varepsilon_j=\pm 1,
\;1\le j\le N, \; \sigma \in S_N\},
$$
where $\hat{x}_j=1$ for $1\le j\le k$, $\hat{x}_j=0$ for $k+1\le j\le
N$, $S_N$ is the permutation group of $N$ elements. Notice that $V_1
= B_1^N$, $V_N = B_\infty^N$.

For $2\le q<\infty$, the lower estimates for $d_n(V_k, \, l_q^N)$ were obtained by Gluskin \cite{gluskin1}.

\begin{trma}
\label{gl_q_g_2} {\rm \cite{gluskin1}} Let $2\le q<\infty$, $1\le k\le N$. Then
\begin{align}
\label{kq1} d_n(V_k, \, l_q^N) \underset{q}{\gtrsim} \begin{cases} k^{1/q} & \text{for }n\le 
\min \{N^{\frac 2q}k^{1 -\frac 2q}, \, N/2\}, \\ k^{1/2}n^{-1/2}N^{1/q} & \text{for }N^{\frac 2q}k^{1 -\frac 2q}
\le n\le N/2. \end{cases}
\end{align}
\end{trma}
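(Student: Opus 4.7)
The result is classical, due to Gluskin \cite{gluskin1}; I outline the plan of his proof. Using the standard representation $V_k = B_\infty^N \cap k\,B_1^N$, the extreme points of $V_k$ are the $\binom{N}{k}2^k$ signed indicators $v_{S,\varepsilon}=\sum_{i \in S}\varepsilon_i e_i$, $|S|=k$, $\varepsilon\in \{\pm 1\}^S$, each of $\ell_q^N$-norm $k^{1/q}$. The bound \eqref{kq1} splits into two regimes of $n$ that must be treated by complementary arguments, producing for any $n$-dimensional $L\subset \R^N$ a witness $v\in V_k$ with $\dist_{\ell_q^N}(v,L)$ of the required order.

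For very small $n$ (namely $n \le k/2$) a direct inclusion suffices: fixing $A\subset \{1,\dots,N\}$ with $|A|=k$, the coordinate cube $\{x : \supp x \subset A,\ \|x\|_\infty \le 1\}$ lies in $V_k$ and equals $B_\infty^k$ (embedded via $A$), so by Theorem \ref{p_s},
$$
d_n(V_k, \ell_q^N) \ge d_n(B_\infty^k, \ell_q^k) = (k-n)^{1/q} \asymp k^{1/q}.
$$
However, for $k/2 < n \le N^{2/q}k^{1-2/q}$ (which can be much larger than $k$ when $k\ll N$) this argument fails, and the sharper mechanism of \cite{gluskin1} is required. Given an $n$-dimensional $L\subset \R^N$, one invokes an $\ell_2$-sections lemma to produce a coordinate subset $T\subset\{1,\dots,N\}$ of size $\ge N/2$ on which the $\ell_2$-orthogonal projector $P_{L^\perp}$ satisfies $\|P_{L^\perp}e_i\|_2\ge c$. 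Taking a uniformly random $S\subset T$ of size $k$ with independent uniform signs $\varepsilon$ and applying Khintchine's inequality coordinatewise gives $\mathbf{E}_{S,\varepsilon}\|P_{L^\perp}v_{S,\varepsilon}\|_q^q \gtrsim k$; a duality step (replacing the $\ell_2$-orthogonal projector by an $\ell_q$-best approximant, legal in the stated range of $n$) converts this to $\dist_{\ell_q^N}(v_{S,\varepsilon}, L) \gtrsim k^{1/q}$ for some vertex $v_{S,\varepsilon}\in V_k$.

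For the second regime $N^{2/q}k^{1-2/q}\le n\le N/2$, the target $k^{1/2}n^{-1/2}N^{1/q}$ matches $\sqrt k\cdot d_n(B_2^N,\ell_q^N)$ by Theorem \ref{glus}(2). The plan here is to extract, inside $V_k$, a scaled Euclidean structure of radius $\asymp\sqrt k$. The naive inclusion $\min(1,k/\sqrt N)\,B_2^N\subset V_k$ only yields the suboptimal factor $k/\sqrt N$; the sharp $\sqrt k$ comes from a Kashin-type sections argument, restricting to a well-chosen subspace $H\subset \R^N$ on which the $\ell_\infty$- and $\ell_1$-constraints defining $V_k$ both become $\ell_2$-constraints of the correct order, so that $V_k\cap H \supset c\sqrt k\, B_{2,H}$. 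Applying the Garnaev--Gluskin bound $d_n(B_2^H,\ell_q^H)\gtrsim n^{-1/2}N^{1/q}$ from Theorem \ref{glus}(2) inside this section then delivers the target.

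The main technical obstacle is the second regime, where the correct scale $\sqrt k$ must be extracted from $V_k$ via a sections argument rather than a direct inclusion; in the first regime the subtlety is the $\ell_2\to\ell_q$ conversion of the Khintchine lower bound, which requires Gluskin's dualization from the orthogonal projector to the $\ell_q$-best approximant, and is what fixes the transition $n\asymp N^{2/q}k^{1-2/q}$ between the two pieces of \eqref{kq1}.
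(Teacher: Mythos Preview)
The paper does not prove this statement. Theorem~\ref{gl_q_g_2} is simply quoted from Gluskin~\cite{gluskin1} as a known result, with no argument given or sketched; it is used only as a black-box lower bound in the proofs of Lemmas~\ref{expl1} and~\ref{expl2}. There is therefore no proof in the paper to compare your outline against.

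As for the outline itself: the first-regime plan (averaging over random vertices $v_{S,\varepsilon}$ with a Khintchine estimate) is the standard mechanism and is fine at the level of a sketch. The second-regime plan, however, has a genuine tension you should be aware of. You want a section $H\subset\R^N$ with $V_k\cap H\supset c\sqrt{k}\,B_{2,H}$ and then to invoke $d_n(B_2^H,\ell_q^H)\gtrsim n^{-1/2}N^{1/q}$. The latter bound requires $\dim H$ of order $N$. But the inclusion forces, by homogeneity, $\sup_{x\in H,\,\|x\|_2=1}\|x\|_\infty\le (c\sqrt{k})^{-1}$, whereas on \emph{any} subspace $H$ one has
\[
\sup_{x\in H,\,\|x\|_2=1}\|x\|_\infty=\max_i\|P_He_i\|_2\ge\sqrt{\dim H/N}
\]
(since $\sum_i\|P_He_i\|_2^2=\dim H$). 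Thus the inclusion forces $\dim H\lesssim N/k$, which for $k\ll N$ is incompatible with $\dim H\asymp N$ and, moreover, may be smaller than $n$ in the stated range. At best this route yields $k^{1/2-1/q}n^{-1/2}N^{1/q}$, not $k^{1/2}n^{-1/2}N^{1/q}$. Gluskin's actual argument for this regime does not go through a Euclidean section of $V_k$; it proceeds by a direct averaging/duality computation over the vertices, parallel to the first regime but tuned to the range where $n^{-1/2}N^{1/q}\le 1$.
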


The following result was obtained by Gluskin \cite{bib_glus_3} (with the constant in order inequality depending on $q$), Malykhin and Rjutin \cite{mal_rjut} (with the constant independent of $q$). In \cite[p. 39]{bib_glus_3} it is noticed that Galeev obtained the equality $d_n(V_k, \, l_1^N) =\min\{k, \, N-n\}$.
\begin{trma}
 {\rm \cite{bib_glus_3, mal_rjut}.} Let $1\le q\le 2$, $n\le
N/2$. Then 
\begin{align}
\label{gl_q_l_2}
d_n(V_k, \, l_q^N) \gtrsim k^{1/q}.
\end{align}
\end{trma}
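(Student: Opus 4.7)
My plan is to realize $V_k$ as an intersection of two classical $l_p$-balls and then apply the intersection formula from Theorem \ref{fin_dim_inters}. The first step is the convex-geometric identification
\[
V_k = k B_1^N \cap B_\infty^N.
\]
The inclusion $V_k \subset k B_1^N \cap B_\infty^N$ is immediate, since every vertex of $V_k$ has $l_1$-norm equal to $k$ and $l_\infty$-norm equal to $1$. For the converse, I would use the sign symmetry of $V_k$ to reduce to the case $x \ge 0$, then invoke the ``bathtub'' polytope identity $\{y \in [0,1]^N : \sum y_i \le k\} = \operatorname{conv}\{v \in \{0,1\}^N : |v|_1 \le k\}$, and finally observe that each such $v$ with $|v|_1 = j \le k$ already lies in $V_k$: extending $\operatorname{supp} v$ to any set $T$ of size $k$ and writing $v = \tfrac12 u + \tfrac12 u'$, where $u$, $u'$ agree with $v$ on $\operatorname{supp} v$ and take values $+1$, $-1$ respectively on $T \setminus \operatorname{supp} v$, gives a representation as a convex combination of two vertices of $V_k$.

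Once this identification is in hand I would apply Theorem \ref{fin_dim_inters} with $A = \{1, 2\}$, $p_1 = 1$, $\nu_1 = k$, $p_2 = \infty$, $\nu_2 = 1$, so that $M_0 = V_k$. Using the extension to nonstrict inequalities noted right after Theorem \ref{fin_dim_inters}, the relevant interpolation exponent is determined by $\tfrac1q = \tfrac{1-\lambda}{\infty} + \tfrac{\lambda}{1}$, so $\lambda = \tfrac1q$ and the mixed term equals $\nu_2^{1-\lambda}\nu_1^{\lambda} = k^{1/q}$. Theorem \ref{glus}(1) yields $d_n(k B_1^N, l_q^N) \asymp k$ for $n \le N/2$, $1 \le q \le 2$, while Theorem \ref{p_s} gives $d_n(B_\infty^N, l_q^N) = (N - n)^{1/q} \asymp N^{1/q}$ for $n \le N/2$. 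Since $1 \le k \le N$ and $q \ge 1$ force $k^{1/q} \le \min(k, N^{1/q})$, the $\min$ in the formula of Theorem \ref{fin_dim_inters} is attained at the mixed term, so $d_n(V_k, l_q^N) \asymp k^{1/q}$, which in particular contains the desired lower bound.

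The only genuinely non-trivial ingredient is the polytope identity $V_k = k B_1^N \cap B_\infty^N$; everything else is a mechanical application of results already collected in this section. I expect no further obstacle, though one should separately check the edge case $q = 1$: there is no strict pair $p_\alpha > q$, $p_\beta < q$, but under the nonstrict version the pair $(\alpha, \beta) = (2, 1)$ still contributes a term of size $k$, and $\min\{k, N\} = k = k^{1/1}$, as required.
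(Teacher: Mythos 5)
Your reduction is circular, so it does not constitute a proof of \eqref{gl_q_l_2}. The polytope identity $V_k = kB_1^N\cap B_\infty^N$ is correct (and your argument for it via sign symmetry and the extreme points of $\{y\in[0,1]^N:\ \sum y_i\le k\}$ is fine), and the computation $\lambda=1/q$, $\nu_2^{1-\lambda}\nu_1^{\lambda}=k^{1/q}$ is also right. The problem is that the lower-bound half of Theorem \ref{fin_dim_inters} --- specifically the assertion that the mixed term $\nu_\alpha^{1-\lambda_{\alpha,\beta}}\nu_\beta^{\lambda_{\alpha,\beta}}$, when it realizes the minimum in \eqref{dnm0}, is also a \emph{lower} bound for $d_n(M_0,\,l_q^N)$ --- is itself established (in \cite{vas_sob}, \cite{vas_ball_inters}, and visibly in the proof of case 3 of Lemma \ref{expl1} in this paper) by inscribing a dilate of $V_k$ into $M_0$ and invoking precisely \eqref{gl_q_l_2}. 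So you are deriving the special case from a general theorem whose proof rests on that special case.

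The statement in question is not a soft consequence of ball-intersection estimates; it \emph{is} the hard result. Indeed, $V_k=kB_1^N\cap B_\infty^N$ is exactly the ``intersection of a cube and an octahedron'' of Gluskin's title, and the content of \cite{bib_glus_3} and \cite{mal_rjut} is the nontrivial lower bound $d_n(V_k,\,l_q^N)\gtrsim k^{1/q}$ for $q\le 2$ (the upper bound $d_n(V_k,\,l_q^N)\lesssim k^{1/q}$, via $V_k\subset k^{1/q}B_q^N$, is the easy direction, and is all that your min-computation genuinely recovers). The paper accordingly imports this result by citation rather than proving it; a genuine proof requires the combinatorial/volumetric argument of Gluskin or the $l_{2,1}$-approach of Malykhin and Ryutin, neither of which appears in your proposal.
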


\section{On estimates of the widths of an intersection of finite-dimensional balls}

In this section, we clarify the formulas for the estimates of the widths from Theorem \ref{fin_dim_inters}.

Let, first, $1\le q\le 2$. Then from (\ref{dnm0}) and Theorems \ref{glus}, \ref{p_s} it follows that for $n\le N/2$
\begin{align}
\label{dnm01}
d_n(M_0, \, l_q^N) \asymp \min \{\min _{p_\alpha \ge q} \nu_\alpha N^{1/q-1/p_\alpha}, \, \min _{p_\alpha \le q} \nu_\alpha, \, \min _{p_\alpha> q, \, p_\beta< q} \nu_\alpha ^{1-\lambda_{\alpha,\beta}}\nu_\beta^{\lambda_{\alpha,\beta}}\}.
\end{align}

\begin{Lem}
\label{expl1} Let $1\le q\le 2$, $p_\alpha\ne q$ for all $\alpha\in A$, $n \le N/2$, and let the set $M_0$ be defined by formula \eqref{m_set_def}.
\begin{enumerate}
\item Let $p_{\alpha_*} < q$, $\nu_{\alpha_*} \le \nu_\beta$ for each $\beta \in A$. Then $d_n(M_0, \, l_q^N) \asymp \nu_{\alpha_*}$.

\item Let $p_{\alpha_*} > q$, $\nu_{\alpha_*} N^{1/p_\beta -1/p_{\alpha_*}}\le \nu_\beta$ for each $\beta \in A$. Then $d_n(M_0, \, l_q^N) \asymp \nu_{\alpha_*} N^{1/q-1/p_{\alpha_*}}$.

\item Let $p_{\alpha_*} > q$, $p_{\beta_*}< q$, and let
\begin{align}
\label{nu_ab}
\begin{array}{c}
\nu_{\alpha_*} ^{1-\lambda_{\alpha_*,\beta_*}}\nu_{\beta_*}^{\lambda_{\alpha_*,\beta_*}} \le \nu_{\alpha_*} ^{1-\lambda_{\alpha_*,\gamma}}\nu_\gamma^{\lambda_{\alpha_*,\gamma}}, \quad \gamma \in A, \; p_\gamma < q,
\\
\nu_{\alpha_*} ^{1-\lambda_{\alpha_*,\beta_*}}\nu_{\beta_*}^{\lambda_{\alpha_*,\beta_*}} \le \nu_\gamma ^{1-\lambda_{\gamma,\beta_*}}\nu_{\beta_*}^{\lambda_{\gamma,\beta_*}}, \quad \gamma \in A, \; p_\gamma > q,
\end{array}
\end{align}
\begin{align}
\label{nu_ab1}
\nu_{\alpha_*}  \le \nu_{\beta_*}, \quad \nu_{\alpha_*}\ge \nu_{\beta_*} N^{1/p_{\alpha_*}-1/p_{\beta_*}}.
\end{align}
Then
$d_n(M_0, \, l_q^N) \asymp \nu_{\alpha_*} ^{1-\lambda_{\alpha_*,\beta_*}}\nu_{\beta_*}^{\lambda_{\alpha_*,\beta_*}}$.
\end{enumerate}
\end{Lem}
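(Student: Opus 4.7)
The plan is to derive each assertion from \eqref{dnm01}. Because $p_\alpha\neq q$ for every $\alpha\in A$, the inner minimum $\min_{p_\alpha\le q}\nu_\alpha$ in \eqref{dnm01} runs only over $\{p_\alpha<q\}$, and analogously for the first minimum. In all three parts the distinguished quantity on the right-hand side of the assertion is visibly one of the entries in \eqref{dnm01}, which gives the upper bound $d_n(M_0,l_q^N)\lesssim(\cdot)$ for free; the work lies in showing that every other entry in \eqref{dnm01} dominates it up to a constant.

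Parts 1 and 2 reduce to straightforward substitutions. In Part 1, the $\nu_{\alpha_*}$-term sits in $\min_{p_\alpha<q}\nu_\alpha$; each $\nu_\beta N^{1/q-1/p_\beta}$ with $p_\beta>q$ is $\ge\nu_{\alpha_*}$ because $\nu_\beta\ge\nu_{\alpha_*}$ and $N^{1/q-1/p_\beta}\ge 1$; and each interpolation term is bounded below by the geometric mean of two quantities $\ge\nu_{\alpha_*}$. In Part 2, the target $\nu_{\alpha_*}N^{1/q-1/p_{\alpha_*}}$ appears in the first minimum of \eqref{dnm01}; plugging the hypothesis $\nu_\beta\ge\nu_{\alpha_*}N^{1/p_\beta-1/p_{\alpha_*}}$ into the remaining entries and using $(1-\lambda_{\alpha,\beta})/p_\alpha+\lambda_{\alpha,\beta}/p_\beta=1/q$ for the interpolation terms reduces every inequality to a one-line calculation.

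Part 3 is the central case, for which I would pass to the logarithmic variables $a_\gamma=\log\nu_\gamma$, $t_\gamma=1/p_\gamma$, $s=1/q$, so that $\log(\nu_\alpha^{1-\lambda_{\alpha,\beta}}\nu_\beta^{\lambda_{\alpha,\beta}})$ is the value at $t=s$ of the affine line through $(t_\alpha,a_\alpha)$ and $(t_\beta,a_\beta)$. Write $\ell$ for the line through $P_{\alpha_*}=(t_{\alpha_*},a_{\alpha_*})$ and $P_{\beta_*}=(t_{\beta_*},a_{\beta_*})$. The essential step, which I expect to be the main technical obstacle, is to prove that $\ell$ is a support line from below for the configuration, in the sense that $a_\gamma\ge\ell(t_\gamma)$ for every $\gamma\in A$. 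This is a short slope computation from \eqref{nu_ab}: the first inequality forces the slope of $P_{\alpha_*}P_\gamma$ to be at least the slope of $\ell$ for $p_\gamma<q$ (since both lines share $P_{\alpha_*}$ and $t_\gamma>t_{\alpha_*}$), while the second forces the slope of $P_\gamma P_{\beta_*}$ to be at most the slope of $\ell$ for $p_\gamma>q$ (since both lines share $P_{\beta_*}$ and $t_\gamma<t_{\beta_*}$); in both cases, evaluating at $t_\gamma$ gives $a_\gamma\ge\ell(t_\gamma)$.

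Once the support property is in place, the rest is automatic. For any interpolation pair $(\alpha,\beta)$ with $t_\alpha<s<t_\beta$, linearity of $\ell$ combined with the convex combination defining $L_{\alpha,\beta}(s)$ yields $L_{\alpha,\beta}(s)\ge\ell(s)$. For the boundary terms I use \eqref{nu_ab1}, which in these variables says precisely that the slope of $\ell$ lies in $[0,\log N]$: for $p_\gamma<q$ nonnegativity of the slope gives $\ell(t_\gamma)\ge\ell(s)$, hence $\nu_\gamma\ge e^{\ell(s)}$; for $p_\gamma>q$ the upper bound $\log N$ on the slope gives $\ell(t_\gamma)\ge\ell(s)+(t_\gamma-s)\log N$, hence $\nu_\gamma N^{1/q-1/p_\gamma}\ge e^{\ell(s)}$. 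Combined with the support property, this bounds every entry in \eqref{dnm01} from below by $\nu_{\alpha_*}^{1-\lambda_{\alpha_*,\beta_*}}\nu_{\beta_*}^{\lambda_{\alpha_*,\beta_*}}$ and completes Part 3.
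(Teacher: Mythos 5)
Your proposal is correct, but it takes a genuinely different route to the lower bound than the paper does. The paper uses \eqref{dnm01} only for the upper estimate and then proves the matching lower estimates directly, by exhibiting in each case an extremal subset of $M_0$: $\nu_{\alpha_*}B_1^N$ in case 1, $\nu_{\alpha_*}N^{-1/p_{\alpha_*}}B_\infty^N$ in case 2, and in case 3 the body $\nu_{\alpha_*}^{1-\lambda_{\alpha_*,\beta_*}}\nu_{\beta_*}^{\lambda_{\alpha_*,\beta_*}}k^{-1/q}V_k$ with $k=\lceil l\rceil$ and $l$ defined by $\nu_{\alpha_*}/\nu_{\beta_*}=l^{1/p_{\alpha_*}-1/p_{\beta_*}}$; the lower bounds then come from Theorems \ref{glus}, \ref{p_s} and the Gluskin--Malykhin--Ryutin estimate \eqref{gl_q_l_2}. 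You instead invoke the $\gtrsim$ half of the two-sided formula \eqref{dnm01} and verify, via the support-line/convexity argument in logarithmic coordinates, that the distinguished entry realizes the minimum; this is logically sound, and in fact the inequalities you check (the support property plus slope of $\ell$ in $[0,\log N]$) are essentially the same arithmetic as the paper's verification of the inclusion condition \eqref{nu_a_l_nu_g}, so the computational content coincides. What the paper's route buys is independence from the lower-bound half of the cited Proposition (Theorem \ref{fin_dim_inters}) and, more importantly, a template that carries over to Lemma \ref{expl2} for $q>2$, where one must choose $k$ to land in the correct regime of Gluskin's bound \eqref{kq1} and a pure ``identify the minimum'' argument would not by itself produce the extremal sets needed there. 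What your route buys is brevity: given \eqref{dnm01} as a black box, the whole lemma reduces to elementary comparisons, and your geometric reformulation of \eqref{nu_ab} and \eqref{nu_ab1} as a support line with slope in $[0,\log N]$ is a clean way to organize case 3.
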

\begin{proof}
By (\ref{dnm01}), it suffices to prove the lower estimates for the widths $d_n(M_0, \, l_q^N)$.

In case 1, we use the inclusion $\nu_{\alpha_*} B^N_1 \subset M_0$ and Theorem \ref{glus}, in case 2, the inclusion $\nu_{\alpha_*} N^{-1/p_{\alpha_*}} B^N_{\infty} \subset M_0$ and Theorem \ref{p_s}.

In case 3, we define the number $l$ by the equation $\frac{\nu_{\alpha_*}}{\nu_{\beta_*}}= l^{1/p_{\alpha_*}-1/p_{\beta_*}}$ and set $k = \lceil l\rceil$. From (\ref{nu_ab1}) it follows that $1\le l\le N$ and, hence, $1\le k\le N$. We show that $\nu_{\alpha_*} ^{1-\lambda_{\alpha_*,\beta_*}}\nu_{\beta_*}^{\lambda_{\alpha_*,\beta_*}} k^{-1/q}V_k \subset 2M_0$; then the lower estimate for $d_n(M_0, \, l_q^N)$ follows from (\ref{gl_q_l_2}). It suffices to check that, for each $\gamma\in A$,
\begin{align}
\label{nu_a_l_nu_g}
\nu_{\alpha_*} ^{1-\lambda_{\alpha_*,\beta_*}}\nu_{\beta_*}^{\lambda_{\alpha_*,\beta_*}} l^{1/p_\gamma-1/q}\le \nu_\gamma;
\end{align}
i.e., $\nu_{\alpha_*} l^{1/p_\gamma-1/p_{\alpha_*}} \le \nu_\gamma$ (see (\ref{q_l_ab}) and the definition of $l$). The last inequality follows from (\ref{nu_ab}); the arguments are the same as in \cite[p. 6]{vas_ball_inters}.
\end{proof}

Now we consider the case $q>2$. From (\ref{dnm0q2}) and Theorems \ref{glus}, \ref{p_s} it follows that, for $N^{2/q}\le n \le \frac{N}{2}$,
\begin{align}
\label{dnm0q21}
\begin{array}{c}
d_n(M_0, \, l_q^N) \underset{q}{\asymp} \min \left\{ \min _{p_\alpha \ge q}\nu_\alpha N^{1/q-1/p_\alpha}, \, \min _{2\le p_\alpha \le q}\nu_\alpha (n^{-1/2}N^{1/q})^{\frac{1/p_\alpha-1/q}{1/2-1/q}},\right. \\ \left.\min _{p_\alpha \le 2} \nu_\alpha n^{-1/2}N^{1/q}, \, \min _{p_\alpha> q, \, p_\beta< q} \nu_\alpha ^{1-\lambda_{\alpha,\beta}}\nu_\beta^{\lambda_{\alpha,\beta}}, \, \min _{p_\alpha> 2, \, p_\beta< 2} \nu_\alpha ^{1-\tilde\lambda_{\alpha,\beta}}\nu_\beta^{\tilde\lambda_{\alpha,\beta}}n^{-1/2}N^{1/q}\right\}.
\end{array}
\end{align}

\begin{Lem}
\label{expl2} Let $2< q<\infty$, $p_\alpha\notin \{2, \, q\}$ for all $\alpha\in A$, $N^{2/q}\le n \le N/2$, and let the set $M_0$ be defined by formula \eqref{m_set_def}.
\begin{enumerate}
\item Let $p_{\alpha_*} < 2$, $\nu_{\alpha_*} \le \nu_\beta$ for all $\beta \in A$. Then $d_n(M_0, \, l_q^N) \underset{q}{\asymp} \nu_{\alpha_*}n^{-1/2}N^{1/q}$.

\item Let $p_{\alpha_*} > q$, $\nu_{\alpha_*} N^{1/p_\beta -1/p_{\alpha_*}}\le \nu_\beta$ for all $\beta \in A$. Then $d_n(M_0, \, l_q^N) \underset{q}{\asymp} \nu_{\alpha_*} N^{1/q-1/p_{\alpha_*}}$.

\item Let $2<p_{\alpha_*}<q$, 
\begin{align}
\label{nuan12n1q}
\nu_{\alpha_*}(n^{1/2}N^{-1/q})^{\frac{1/p_\beta-1/p_{\alpha_*}}{1/2-1/q}} \le \nu_\beta
\end{align}
for all $\beta \in A$. Then 
\begin{align}
\label{dn_m0_j}
d_n(M_0, \, l_q^N) \underset{q}{\asymp} \nu_{\alpha_*} (n^{-1/2}N^{1/q})^{\frac{1/p_{\alpha_*}-1/q}{1/2-1/q}}.
\end{align}

\item Let $p_{\alpha_*} > q$, $p_{\beta_*}< q$, and let \eqref{nu_ab} hold, as well as
\begin{align}
\label{nu_ab1_lem2}
\nu_{\alpha_*}  \le \nu_{\beta_*}(n^{1/2}N^{-1/q})^{\frac{1/p_{\alpha_*}-1/p_{\beta_*}}{1/2-1/q}}, \quad \nu_{\alpha_*}\ge \nu_{\beta_*} N^{1/p_{\alpha_*}-1/p_{\beta_*}}.
\end{align}
Then
$d_n(M_0, \, l_q^N) \underset{q}{\asymp} \nu_{\alpha_*} ^{1-\lambda_{\alpha_*,\beta_*}}\nu_{\beta_*}^{\lambda_{\alpha_*,\beta_*}}$.

\item Let $p_{\alpha_*} > 2$, $p_{\beta_*}< 2$, and let 
\begin{align}
\label{nu_ab_lem2}
\begin{array}{c}
\nu_{\alpha_*} ^{1-\tilde\lambda_{\alpha_*,\beta_*}}\nu_{\beta_*}^{\tilde\lambda_{\alpha_*,\beta_*}} \le \nu_{\alpha_*} ^{1-\tilde\lambda_{\alpha_*,\gamma}}\nu_\gamma^{\tilde\lambda_{\alpha_*,\gamma}}, \quad \gamma \in A, \; p_\gamma < 2,
\\
\nu_{\alpha_*} ^{1-\tilde\lambda_{\alpha_*,\beta_*}}\nu_{\beta_*}^{\tilde\lambda_{\alpha_*,\beta_*}} \le \nu_\gamma ^{1-\tilde\lambda_{\gamma,\beta_*}}\nu_{\beta_*}^{\tilde\lambda_{\gamma,\beta_*}}, \quad \gamma \in A, \; p_\gamma > 2,
\end{array}
\end{align}
\begin{align}
\label{nu_ab2_lem2}
\nu_{\alpha_*}  \le \nu_{\beta_*}, \quad \nu_{\alpha_*}\ge \nu_{\beta_*} (n^{1/2}N^{-1/q})^{\frac{1/p_{\alpha_*}-1/p_{\beta_*}}{1/2-1/q}}.
\end{align}
Then
$d_n(M_0, \, l_q^N) \underset{q}{\asymp} \nu_{\alpha_*} ^{1-\tilde\lambda_{\alpha_*,\beta_*}}\nu_{\beta_*}^{\tilde \lambda_{\alpha_*,\beta_*}}n^{-1/2}N^{1/q}$.
\end{enumerate}
\end{Lem}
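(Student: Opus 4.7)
The plan is to mirror the proof strategy of Lemma \ref{expl1}: both upper and lower bounds are at hand. The upper bounds follow from Theorem \ref{fin_dim_inters}, specifically the formula (\ref{dnm0q21}); it suffices to verify in each of the five cases that the quantity claimed in the lemma dominates (up to a multiplicative constant depending on $q$) every other term appearing on the right-hand side of (\ref{dnm0q21}). For the lower bounds, in each case I would exhibit a suitably scaled model body contained in $M_0$ whose $n$-width in $l_q^N$ is already known from Theorems \ref{glus}, \ref{p_s}, \ref{gl_q_g_2}.

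For the upper bounds, cases 1 and 2 are essentially the calculation from Lemma \ref{expl1}: if $\nu_{\alpha_*}$ is the smallest radius (case 1) or if it dominates all the others after the appropriate $N$-rescaling (case 2), then comparing $\nu_{\alpha_*}B_1^N$-type and $\nu_{\alpha_*}N^{-1/p_{\alpha_*}}B_\infty^N$-type terms in (\ref{dnm0q21}) against the remaining terms is straightforward. Case 3 is the genuinely new situation and uses the algebraic identity that $k_*:=(n^{1/2}N^{-1/q})^{1/(1/2-1/q)}$ satisfies $N^{2/q}k_*^{1-2/q}=n$; this identity converts the ratio of two branches of Gluskin's bound in Theorem \ref{glus} into a power of $k_*$, and rewriting (\ref{nuan12n1q}) in terms of $k_*$ reduces the comparison to five one-line inequalities according to whether $p_\beta\in I$, $J\setminus\{p_{\alpha_*}\}$, or $K$. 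Cases 4 and 5 repeat the scheme of case 3 of Lemma \ref{expl1} but in the regime $2<q<\infty$, using (\ref{nu_ab}), (\ref{nu_ab_lem2}), and the second halves of (\ref{nu_ab1_lem2}), (\ref{nu_ab2_lem2}).

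For the lower bounds, cases 1 and 2 follow from the inclusions $\nu_{\alpha_*}B_1^N\subset M_0$ and $\nu_{\alpha_*}N^{-1/p_{\alpha_*}}B_\infty^N\subset M_0$, together with Theorems \ref{glus} and \ref{p_s}. In case 3 I would set $k=\lceil k_*\rceil$ and check that $\nu_{\alpha_*}k^{-1/p_{\alpha_*}}V_k\subset cM_0$; since $V_k\subset k^{1/p}B_p^N$ for every $p$, the inclusion reduces to $\nu_{\alpha_*}k^{1/p_\beta-1/p_{\alpha_*}}\le\nu_\beta$, which is exactly (\ref{nuan12n1q}) once $k$ is replaced by $k_*$. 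The identity $N^{2/q}k_*^{1-2/q}=n$ places $k_*$ at the transition of the two branches of (\ref{kq1}), so $d_n(V_k,l_q^N)\gtrsim_q k^{1/2}n^{-1/2}N^{1/q}$, and substituting $k\asymp k_*$ yields exactly the expression on the right of (\ref{dn_m0_j}). Case 4 is treated by choosing $l$ via $\nu_{\alpha_*}/\nu_{\beta_*}=l^{1/p_{\alpha_*}-1/p_{\beta_*}}$ and $k=\lceil l\rceil$: the hypotheses (\ref{nu_ab1_lem2}) give $1\le l\le k_*$, so $k\le k_*$ and (\ref{kq1}) supplies $d_n(V_k,l_q^N)\gtrsim_q k^{1/q}$, exactly as in Lemma \ref{expl1}. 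Case 5 uses the same $l$, but now (\ref{nu_ab2_lem2}) forces $k_*\le l\le N$, putting us in the other branch of (\ref{kq1}); the inclusion $\nu_{\alpha_*}k^{-1/p_{\alpha_*}}V_k\subset cM_0$ follows from (\ref{nu_ab_lem2}) by the bookkeeping of \cite[p.\,6]{vas_ball_inters}, and multiplying Gluskin's bound by the scaling factor produces $\nu_{\alpha_*}l^{1/2-1/p_{\alpha_*}}n^{-1/2}N^{1/q}$, which is the claimed quantity since $l^{1/2-1/p_{\alpha_*}}\asymp\nu_{\beta_*}^{\tilde\lambda_{\alpha_*,\beta_*}}\nu_{\alpha_*}^{-\tilde\lambda_{\alpha_*,\beta_*}}$.

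The main obstacle is the bookkeeping in case 3: the scale $k_*$ is determined by $n$, $N$, $q$ alone and not by the $\nu_\alpha$'s, yet the single hypothesis (\ref{nuan12n1q}) must do three simultaneous jobs: guarantee the inclusion $\nu_{\alpha_*}k_*^{-1/p_{\alpha_*}}V_{k_*}\subset cM_0$; make the claimed expression the minimum in (\ref{dnm0q21}); and keep $k_*$ in the transition regime of Gluskin's two branches. One has to track signs of $1/p_\beta-1/p_{\alpha_*}$ carefully depending on the position of $p_\beta$ relative to $p_{\alpha_*}$, $2$, and $q$; the single identity $(1/p_\beta-1/p_{\alpha_*})/(1/2-1/q)=$ the effective exponent that appears in each of the three checks makes everything consistent, but the case analysis is the delicate part of the argument.
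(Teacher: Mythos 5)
Your overall strategy is the same as the paper's: upper bounds read off from \eqref{dnm0q21}, lower bounds via inclusions of scaled octahedron-type bodies $V_k$ into a constant multiple of $M_0$ combined with Gluskin's bound \eqref{kq1}, with the same choices of scale ($l=k_*:=(n^{1/2}N^{-1/q})^{1/(1/2-1/q)}$ in case 3, and $l$ defined by $\nu_{\alpha_*}/\nu_{\beta_*}=l^{1/p_{\alpha_*}-1/p_{\beta_*}}$ in cases 4 and 5). Cases 1--3 are handled correctly.

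In cases 4 and 5, however, you have the ranges of $l$ backwards, and this invalidates your justification for which branch of \eqref{kq1} applies. Since $p_{\alpha_*}>p_{\beta_*}$, the exponent $1/p_{\alpha_*}-1/p_{\beta_*}$ is negative, so solving \eqref{nu_ab1_lem2} for $l$ reverses the inequalities and yields $k_*\le l\le N$ in case 4 (not $1\le l\le k_*$), while \eqref{nu_ab2_lem2} yields $1\le l\le k_*$ in case 5 (not $k_*\le l\le N$). This matters because for $q>2$ the quantity $N^{2/q}k^{1-2/q}$ is increasing in $k$ and equals $n$ precisely at $k=k_*$, so the branch $d_n(V_k,\,l_q^N)\underset{q}{\gtrsim}k^{1/q}$ of \eqref{kq1} requires $k\ge k_*$, whereas the branch $d_n(V_k,\,l_q^N)\underset{q}{\gtrsim}k^{1/2}n^{-1/2}N^{1/q}$ requires $k\le k_*$. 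As written, your deductions ``$l\le k_*$, hence the $k^{1/q}$ bound'' in case 4 and ``$l\ge k_*$, hence the $k^{1/2}n^{-1/2}N^{1/q}$ bound'' in case 5 are each non sequiturs: the stated range of $l$ would put you in the \emph{opposite} branch. The branches you ultimately use are the correct ones, so the argument is repaired simply by correcting the two ranges of $l$; the remaining ingredients (the reduction of the inclusion to $\nu_{\alpha_*}l^{1/p_\gamma-1/p_{\alpha_*}}\le\nu_\gamma$ for all $\gamma$, deferred to the computations of \cite{vas_ball_inters}, and the algebra identifying $\nu_{\alpha_*}l^{1/2-1/p_{\alpha_*}}$ with $\nu_{\alpha_*}^{1-\tilde\lambda_{\alpha_*,\beta_*}}\nu_{\beta_*}^{\tilde\lambda_{\alpha_*,\beta_*}}$) agree with the paper's proof.
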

\begin{proof}
By (\ref{dnm0q21}), it suffices to prove the lower estimate.

In case 1, we use the inclusion $\nu_{\alpha_*}B_1^N \subset M_0$ and Theorem \ref{glus}, in case 2, the inclusion $\nu_{\alpha_*}N^{-1/p_{\alpha_*}}B_\infty^N \subset M_0$ and Theorem \ref{p_s}.

In case 3, we set $l = (n^{1/2}N^{-1/q})^{\frac{1}{1/2-1/q}}$, $k=\lceil l\rceil$. Then $1\le l\le N$, $1\le k\le N$, $n \le N^{\frac 2q} k^{1-\frac 2q}$. We claim that $\nu_{\alpha_*}k^{-1/p_{\alpha_*}}V_k \subset 2M_0$; then (\ref{dn_m0_j}) follows from (\ref{kq1}). It suffices to check the inequality $\nu_{\alpha_*} l^{1/p_\beta-1/p_{\alpha_*}} \le \nu_\beta$, $\beta\in A$. It follows from (\ref{nuan12n1q}).

In cases 4, 5, we define the number $l$ by the equation 
\begin{align}
\label{l_def_nuab}
\frac{\nu_{\alpha_*}}{\nu_{\beta_*}}= l^{1/p_{\alpha_*}-1/p_{\beta_*}}.
\end{align}

In case 4, we set $k = \lceil l\rceil$. From (\ref{nu_ab1_lem2}) it follows that $(n^{1/2}N^{-1/q})^{\frac{1}{1/2-1/q}}\le l\le N$. Hence $1\le k\le N$ and $n \le N^{\frac 2q} k^{1-\frac 2q}$. We prove that $\nu_{\alpha_*} ^{1-\lambda_{\alpha_*,\beta_*}}\nu_{\beta_*}^{\lambda_{\alpha_*,\beta_*}} k^{-1/q}V_k \subset 2M_0$ and apply (\ref{kq1}). In order to prove the inclusion it suffices to check that (\ref{nu_a_l_nu_g}) holds for each $\gamma\in A$; by (\ref{q_l_ab}) and (\ref{l_def_nuab}), it is equivalent to $\nu_{\alpha_*} l^{1/p_\gamma-1/p_{\alpha_*}} \le \nu_\gamma$. The last inequality follows from (\ref{nu_ab}); the arguments are the same as in \cite[pp. 11--12]{vas_ball_inters}.

In case 5, we set $k = \lfloor l\rfloor$. From (\ref{nu_ab2_lem2}) it follows that $1\le l\le (n^{1/2}N^{-1/q})^{\frac{1}{1/2-1/q}}$. Hence $1\le k\le N$ and $n \ge N^{\frac 2q} k^{1-\frac 2q}$. We show that $\nu_{\alpha_*} ^{1-\tilde\lambda_{\alpha_*,\beta_*}}\nu_{\beta_*}^{\tilde\lambda_{\alpha_*,\beta_*}} k^{-1/2}V_k \subset 2M_0$ and use (\ref{kq1}). To this end, it suffices to check that for each $\gamma\in A$ the inequality $$\nu_{\alpha_*} ^{1-\tilde\lambda_{\alpha_*,\beta_*}}\nu_{\beta_*}^{\tilde\lambda_{\alpha_*,\beta_*}} l^{1/p_\gamma-1/2}\le \nu_\gamma$$ holds; by (\ref{2_tl_ab}) and (\ref{l_def_nuab}), it is equivalent to $\nu_{\alpha_*} l^{1/p_\gamma-1/p_{\alpha_*}} \le \nu_\gamma$. The last relation follows from (\ref{nu_ab_lem2}); the arguments are the same as in \cite[pp. 12--13]{vas_ball_inters}.
\end{proof}

\section{Proof of Theorem \ref{main}}

The following assertions reduce the problem of estimating the widths $W^{\overline{r}}_{\overline{p}}(\mathbb{T}^d)$ to the problem of estimating the widths of intersections of finite-dimensional balls. Recall that for $\overline{m}\in \N^d$ the number $m$ is defined by \eqref{m_sum_def}.

\begin{Lem}
\label{low_est_lem}
Let $n\in \Z_+$. Then
\begin{align}
\label{per_low} d_n(W^{\overline{r}}_{\overline{p}}(\mathbb{T}^d), \, L_q(\mathbb{T}^d)) \underset{\overline{p},q,\overline{r},d}{\gtrsim} d_n\left(\cap _{j=1}^d 2^{-m_jr_j-m/q+m/p_j} B^{2^m}_{p_j}, \, l_q^{2^m}\right), \quad \overline{m}\in \N^d.
\end{align}
\end{Lem}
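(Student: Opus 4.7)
The plan is to use the monotonicity of Kolmogorov widths to restrict attention to the finite-dimensional subspace $\mathcal{T}_{\overline{m}}$ of trigonometric polynomials with frequencies in the dyadic block $\square_{\overline{m}}$, and then transfer the resulting finite-dimensional extremal problem to $\R^{2^m}$ via the isomorphism $A$ of Theorem \ref{fin_dim_isom}. Concretely, I will exhibit a subset of $W^{\overline{r}}_{\overline{p}}(\mathbb{T}^d)$ lying inside $\mathcal{T}_{\overline{m}}$ whose image under $A$ is (up to a universal constant) the intersection of balls appearing on the right-hand side of \eqref{per_low}.

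First I would take $f\in\mathcal{T}_{\overline{m}}$. By Theorem \ref{dif_r}, the constraint $\|\partial_j^{r_j}f\|_{L_{p_j}(\mathbb{T}^d)}\le 1$ is implied by $\|f\|_{L_{p_j}(\mathbb{T}^d)}\le c_1\cdot 2^{-m_jr_j}$ for an appropriate constant $c_1=c_1(\overline{p},\overline{r},d)>0$. Next, by Theorem \ref{fin_dim_isom}, $\|f\|_{L_{p_j}(\mathbb{T}^d)} \asymp 2^{-m/p_j}\|Af\|_{l_{p_j}^{2^m}}$, so there exists $c_2=c_2(\overline{p},\overline{r},d)>0$ such that the condition $\|Af\|_{l_{p_j}^{2^m}}\le c_2\cdot 2^{-m_jr_j+m/p_j}$ for every $j=1,\dots,d$ guarantees $f\in W^{\overline{r}}_{\overline{p}}(\mathbb{T}^d)$. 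Denoting
$$
M_{\overline{m}}:=\bigcap_{j=1}^d c_2\cdot 2^{-m_jr_j+m/p_j}B^{2^m}_{p_j}\subset \R^{2^m},
$$
we thus have $A^{-1}(M_{\overline{m}})\subset W^{\overline{r}}_{\overline{p}}(\mathbb{T}^d)\cap\mathcal{T}_{\overline{m}}$, hence by monotonicity of the widths
$$
d_n(W^{\overline{r}}_{\overline{p}}(\mathbb{T}^d),L_q(\mathbb{T}^d))\ge d_n(A^{-1}(M_{\overline{m}}),L_q(\mathbb{T}^d)).
$$

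To finish, I apply Theorem \ref{fin_dim_isom} once more, now for the $L_q$ norm: since $A$ is a linear isomorphism carrying subspaces to subspaces and satisfying $\|f\|_{L_q(\mathbb{T}^d)}\asymp 2^{-m/q}\|Af\|_{l_q^{2^m}}$, we obtain
$$
d_n(A^{-1}(M_{\overline{m}}),L_q(\mathbb{T}^d))\underset{q,d}{\asymp} 2^{-m/q}\,d_n(M_{\overline{m}},l_q^{2^m}).
$$
Finally, by the homogeneity $d_n(\lambda M,X)=\lambda\,d_n(M,X)$ for $\lambda>0$, the factor $c_2\cdot 2^{-m/q}$ can be absorbed into the radii of the balls defining $M_{\overline{m}}$, yielding
$$
2^{-m/q}d_n(M_{\overline{m}},l_q^{2^m})\underset{\overline{p},\overline{r},d}{\asymp}d_n\Bigl(\bigcap_{j=1}^d 2^{-m_jr_j-m/q+m/p_j}B^{2^m}_{p_j},\,l_q^{2^m}\Bigr),
$$
which is exactly \eqref{per_low}.

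The argument is essentially bookkeeping of constants; no step is technically deep, since the nontrivial inputs (Littlewood--Paley-type isomorphism, multiplier estimate for Weyl derivatives) have already been quoted. The only point requiring mild care is to ensure that the containment $A^{-1}(M_{\overline{m}})\subset W^{\overline{r}}_{\overline{p}}(\mathbb{T}^d)$ uses the correct direction in each $\asymp$ from Theorems \ref{dif_r} and \ref{fin_dim_isom}, so that the asymptotic constants combine into a single constant absorbed by $\underset{\overline{p},q,\overline{r},d}{\gtrsim}$.
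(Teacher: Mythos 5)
Your proposal follows essentially the same route as the paper's (one-line) proof: restrict to ${\cal T}_{\overline{m}}$, identify $W^{\overline{r}}_{\overline{p}}(\mathbb{T}^d)\cap{\cal T}_{\overline{m}}$ with the intersection of balls via Theorems \ref{dif_r} and \ref{fin_dim_isom}, and transfer the width. The containment $A^{-1}(M_{\overline{m}})\subset W^{\overline{r}}_{\overline{p}}(\mathbb{T}^d)$ and the monotonicity inequality are correctly argued.

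There is, however, one step where the justification you give does not suffice. In the claim
$$
d_n(A^{-1}(M_{\overline{m}}),\,L_q(\mathbb{T}^d))\underset{q,d}{\asymp}2^{-m/q}\,d_n(M_{\overline{m}},\,l_q^{2^m}),
$$
the direction $\lesssim$ does follow from the isomorphism $A$, since every subspace of $l_q^{2^m}$ pulls back to a subspace of ${\cal T}_{\overline{m}}\subset L_q(\mathbb{T}^d)$. But the direction you actually need for the lower bound, $\gtrsim$, does not: on the left-hand side the infimum runs over \emph{all} $n$-dimensional subspaces $L\subset L_q(\mathbb{T}^d)$, most of which are not contained in ${\cal T}_{\overline{m}}$ and hence are not in the domain of $A$, so the fact that ``$A$ carries subspaces to subspaces'' says nothing about them. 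To pass from such an $L$ to a competitor inside ${\cal T}_{\overline{m}}$ one replaces $L$ by $\delta_{\overline{m}}L$ (of dimension $\le n$) and uses, for $x\in{\cal T}_{\overline{m}}$, the identity $x-\delta_{\overline{m}}y=\delta_{\overline{m}}(x-y)$ together with the boundedness of $\delta_{\overline{m}}$ on $L_q(\mathbb{T}^d)$, $1<q<\infty$, with a constant independent of $\overline{m}$ (Marcinkiewicz multiplier theorem / Theorem \ref{lit_pal}); this reduces $d_n(\cdot,L_q(\mathbb{T}^d))$ to $d_n(\cdot,L_q(\mathbb{T}^d)\cap{\cal T}_{\overline{m}})$, which is the form the paper states. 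This projection step is the only place in the lemma where the Littlewood--Paley machinery is genuinely needed; once it is inserted, your proof is complete and coincides with the paper's.
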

\begin{proof}
The arguments are the same as in \cite[Theorem 1]{galeev2}: applying Theorems \ref{lit_pal}, \ref{dif_r} and \ref{fin_dim_isom}, we get the following order inequalities:
$$
d_n(W^{\overline{r}}_{\overline{p}}(\mathbb{T}^d), \, L_q(\mathbb{T}^d)) \underset{\overline{p},q,\overline{r},d}{\gtrsim} d_n(W^{\overline{r}}_{\overline{p}}(\mathbb{T}^d)\cap {\cal T}_{\overline{m}}, \, L_q(\mathbb{T}^d)\cap {\cal T}_{\overline{m}}) \underset{\overline{p},q,\overline{r},d}{\gtrsim}$$$$\gtrsim d_n(\cap _{j=1}^d 2^{-m_jr_j-m/q+m/p_j} B^{2^m}_{p_j}, \, l_q^{2^m}).
$$
\end{proof}
\begin{Lem}
\label{up_est_lem_m}
Let $k\in \Z_+$. Then
\begin{align}
\label{per_up_m} d_k(\delta_{\overline{m}}W^{\overline{r}}_{\overline{p}}(\mathbb{T}^d), \, L_q(\mathbb{T}^d)) \underset{\overline{p},q,\overline{r},d}{\lesssim} d_k\left(\cap _{j=1}^d 2^{-m_jr_j-m/q+m/p_j} B^{2^m}_{p_j}, \, l_q^{2^m}\right), \quad \overline{m}\in \N^d.
\end{align}
\end{Lem}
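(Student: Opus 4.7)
The plan is to mirror the proof of Lemma \ref{low_est_lem}, using the same three tools (Theorems \ref{lit_pal}, \ref{dif_r}, \ref{fin_dim_isom}) but this time transferring upper bounds from the finite-dimensional side back to the function side. Because $\delta_{\overline{m}} W^{\overline{r}}_{\overline{p}}(\mathbb{T}^d)$ sits inside the finite-dimensional space ${\cal T}_{\overline{m}}$, we do not even need Littlewood--Paley here; the isomorphism of Theorem \ref{fin_dim_isom} already takes care of the $L_q$ norm.

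First I would fix $\overline{m}\in\N^d$ and let $A:{\cal T}_{\overline{m}}\to\R^{2^m}$ be the isomorphism from Theorem \ref{fin_dim_isom}. For any $f\in W^{\overline{r}}_{\overline{p}}(\mathbb{T}^d)$, $\delta_{\overline{m}}f\in{\cal T}_{\overline{m}}$ is still a trigonometric polynomial supported on $\square_{\overline{m}}$, and by Theorem \ref{dif_r} (applied to $\delta_{\overline{m}}f$, using that the Weyl derivative $\partial_j^{r_j}$ commutes with $\delta_{\overline{m}}$) one has
$$
\|\delta_{\overline{m}}f\|_{L_{p_j}(\mathbb{T}^d)}\underset{\overline{p},\overline{r},d}{\lesssim} 2^{-m_jr_j}\|\partial_j^{r_j}\delta_{\overline{m}}f\|_{L_{p_j}(\mathbb{T}^d)}\le 2^{-m_jr_j}\|\partial_j^{r_j}f\|_{L_{p_j}(\mathbb{T}^d)}\le 2^{-m_jr_j}.
$$
Applying the $p_j$-version of Theorem \ref{fin_dim_isom} on the left gives $\|A(\delta_{\overline{m}}f)\|_{l^{2^m}_{p_j}}\lesssim 2^{m/p_j-m_jr_j}$, so there is an absolute constant $C=C(\overline{p},\overline{r},d)$ with
$$
A\bigl(\delta_{\overline{m}}W^{\overline{r}}_{\overline{p}}(\mathbb{T}^d)\bigr)\subset \bigcap_{j=1}^d C\cdot 2^{m/p_j-m_jr_j}B^{2^m}_{p_j}.
$$

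Next I would apply Theorem \ref{fin_dim_isom} in the space $L_q$: for every $g\in{\cal T}_{\overline{m}}$, $\|g\|_{L_q(\mathbb{T}^d)}\asymp 2^{-m/q}\|Ag\|_{l^{2^m}_q}$. Combining this with the inclusion above and the 1-homogeneity of the Kolmogorov width,
$$
d_k\bigl(\delta_{\overline{m}}W^{\overline{r}}_{\overline{p}}(\mathbb{T}^d),\,L_q(\mathbb{T}^d)\bigr)\underset{\overline{p},q,\overline{r},d}{\lesssim} 2^{-m/q}d_k\!\left(\bigcap_{j=1}^d C\cdot 2^{m/p_j-m_jr_j}B^{2^m}_{p_j},\,l^{2^m}_q\right).
$$
Pulling the scalar $2^{-m/q}$ inside the intersection multiplies each radius by $2^{-m/q}$, producing exactly the exponents $-m_jr_j-m/q+m/p_j$ from the statement, and the extra constant $C$ is harmlessly absorbed into $\underset{\overline{p},q,\overline{r},d}{\lesssim}$.

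There is no real obstacle; the only point to be careful about is that the relevant inequalities should be applied only to elements of ${\cal T}_{\overline{m}}$ (where Theorems \ref{dif_r} and \ref{fin_dim_isom} are valid), and one should verify that the Weyl derivative commutes with the block projection $\delta_{\overline{m}}$ so that the restriction $\delta_{\overline{m}} W^{\overline{r}}_{\overline{p}}(\mathbb{T}^d)$ lands in the scaled $B^{2^m}_{p_j}$ balls. Both facts are immediate from the Fourier definitions.
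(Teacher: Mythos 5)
Your proof is correct and is essentially the paper's argument (the paper's own proof is just the one-line remark that the lemma follows from Theorems \ref{lit_pal}, \ref{dif_r} and \ref{fin_dim_isom}). One small correction: the step $\|\partial_j^{r_j}\delta_{\overline{m}}f\|_{L_{p_j}}\le\|\partial_j^{r_j}f\|_{L_{p_j}}$ is not a norm-one inequality but the uniform $L_{p_j}$-boundedness of the block projection $\delta_{\overline{m}}$ (a consequence of the Littlewood--Paley/multiplier theorems), so your claim that Theorem \ref{lit_pal} is not needed is slightly overstated — you should write $\underset{p_j,d}{\lesssim}$ there and cite that boundedness.
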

\begin{proof}
The assertion follows from Theorems \ref{lit_pal}, \ref{dif_r} and \ref{fin_dim_isom}.
\end{proof}

In particular, for each function $f\in W^{\overline{r}}_{\overline{p}}(\mathbb{T}^d)$, we have 
\begin{align}
\label{cmdef}
\|\delta_{\overline{m}}f\|_{L_q(\mathbb{T}^d)} \underset{\overline{p},q,\overline{r},d}{\lesssim} d_0\left(\cap _{j=1}^d 2^{-m_jr_j-m/q+m/p_j} B^{2^m}_{p_j}, \, l_q^{2^m}\right) =:C_{\overline{m}}.
\end{align}
Now we apply Theorems \ref{glus}, \ref{p_s} and \ref{fin_dim_inters}. Using the notation (\ref{ij_def}) and (\ref{1qlijql2}), we get that, for $q\le 2$,
$$
C_{\overline{m}}\lesssim \min \Bigl\{ \min _{j\in I_0} 2^{-m_jr_j}, \, \min _{j\in J_0} 2^{-r_jm_j-m/q+m/p_j}, \, \min _{i\in I_0', \, j\in J_0'} 2^{-(1-\lambda_{i,j})r_im_i-\lambda_{i,j}r_jm_j}\Bigr\}.
$$
For $2<q<\infty$ we use (\ref{ijk_def}), (\ref{lam_ij_mu_ij_def}) and get
$$
C_{\overline{m}} \underset{q}{\lesssim} \min \Bigl\{ \min _{j\in I} 2^{-m_jr_j}, \, \min _{j\in J\cup K} 2^{-r_jm_j-m/q+m/p_j}, \, \min _{i\in I', \, j\in J'\cup K} 2^{-(1-\lambda_{i,j})r_im_i-\lambda_{i,j}r_jm_j},
$$
$$
\min _{i\in I\cup J', \, j\in K'} 2^{-(1-\mu_{i,j})r_im_i-\mu_{i,j}r_jm_j -m/q+m/2}\Bigr\} = 
$$
$$
= \min \Bigl\{ \min _{j\in I} 2^{-m_jr_j}, \, \min _{j\in J\cup K} 2^{-r_jm_j-m/q+m/p_j}, \, \min _{i\in I', \, j\in J'\cup K} 2^{-(1-\lambda_{i,j})r_im_i-\lambda_{i,j}r_jm_j}\Bigr\};
$$
the last equality is true since for $i\in I\cup J'$, $j\in K'$
\begin{align}
\label{1tlijriti}
\begin{array}{c}
(1-\mu_{i,j})r_im_i+\mu_{i,j}r_jm_j +m/q-m/2 \le\\ \le\max \{(1-\lambda_{i,j})r_im_i+\lambda_{i,j}r_jm_j , \, r_jm_j+m/q-m/p_j \} \; \text{if }p_i> q,
\end{array}
\end{align}
\begin{align}
\label{1tlijriti1}
\begin{array}{c}
 (1-\mu_{i,j})r_im_i+\mu_{i,j}r_jm_j +m/q-m/2 \le \\ \le\max \{ r_im_i+m/q-m/p_i, \,  r_jm_j+m/q-m/p_j\} \; \text{if } 2< p_i\le q.
\end{array}
\end{align}

Hence, both for $q\le 2$ and for $q>2$, 
\begin{align}
\label{del_m_d0} C_{\overline{m}} \underset{\overline{p},q,\overline{r},d}{\lesssim} 2^{-\varphi(m_1, \, \dots, \, m_d)},
\end{align}
where
\begin{align}
\label{phi_m_def} \begin{array}{c} \varphi(t_1, \, \dots, \, t_d) = \max \Bigl\{\max _{p_j\ge q} t_jr_j, \, \max _{p_j\le q} (t_jr_j+t/q-t/p_j), \\ \max_{p_i> q, \, p_j< q} ((1-\lambda_{i,j})r_it_i+\lambda_{i,j}r_jt_j)\Bigr\}, \quad
t=t_1+\dots+t_d.
\end{array}
\end{align}

We define the function $h$ and the set $D$ by formulas (\ref{h_def_00}) and (\ref{d_set_def_00}) (both for $q\le 2$ and for $q>2$).

\begin{Lem}
\label{rem_est_h} Let $(\alpha_1^*, \, \dots, \, \alpha_d^*)$ be the minimum point of the function $h$ on $D$, and let $h(\alpha_1^*, \, \dots, \, \alpha_d^*)>0$. The numbers $C_{\overline{m}}$ are defined by \eqref{cmdef}. Then for all $N\in \N$
\begin{align}
\label{rem_est_gen} \sum \limits _{m\ge N} \|\delta_{\overline{m}}f\|_{L_q(\mathbb{T}^d)} \underset{\overline{p},q,\overline{r},d}{\lesssim} \sum \limits _{m\ge N} C_{\overline{m}} \underset{\overline{p},q,\overline{r},d}{\lesssim} 2^{-N\cdot h(\alpha_1^*, \, \dots, \, \alpha_d^*)} N^{d-1}.
\end{align}
If the minimum point of the function $h$ on $D$ is unique, then 
\begin{align}
\label{rem_est_gen1} \sum \limits _{m\ge N} \|\delta_{\overline{m}}f\|_{L_q(\mathbb{T}^d)} \underset{\overline{p},q,\overline{r},d}{\lesssim} \sum \limits _{m\ge N} C_{\overline{m}} \underset{\overline{p},q,\overline{r},d}{\lesssim} 2^{-N\cdot h(\alpha_1^*, \, \dots, \, \alpha_d^*)}.
\end{align}
\end{Lem}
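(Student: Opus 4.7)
The first inequality in both \eqref{rem_est_gen} and \eqref{rem_est_gen1} is immediate from the definition \eqref{cmdef} of $C_{\bar m}$, so the real content is to bound $\sum_{m \ge N} C_{\bar m}$. The key observation is that the exponent $\varphi$ from \eqref{phi_m_def} is positively $1$-homogeneous in the right sense: inspecting each of the three maxima in \eqref{phi_m_def} against the three maxima in \eqref{h_def_00}, and using that $\lambda_{i,j}$ and the constants in $h_2$ do not depend on $\bar m$, one checks directly that
$$
\varphi(m_1, \dots, m_d) \;=\; m \cdot h\bigl(m_1/m, \dots, m_d/m\bigr).
$$
Since $(m_1/m, \dots, m_d/m) \in D$, the bound \eqref{del_m_d0} gives
$$
C_{\bar m} \;\lesssim\; 2^{-m\, h(m_1/m,\dots,m_d/m)} \;\le\; 2^{-m\, h(\alpha_1^*,\dots,\alpha_d^*)}.
$$

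For \eqref{rem_est_gen}, I would group tuples by the value of $m = m_1+\dots+m_d$. The number of $\bar m \in \N^d$ with $m_1+\dots+m_d = N'$ is $\binom{N'-1}{d-1} \lesssim_d (N')^{d-1}$, so
$$
\sum_{m=N'} C_{\bar m} \;\lesssim\; (N')^{d-1}\, 2^{-N'\, h(\alpha_1^*,\dots,\alpha_d^*)}.
$$
Because $h(\alpha_1^*,\dots,\alpha_d^*) > 0$, the tail $\sum_{N' \ge N}(N')^{d-1} 2^{-N' h(\alpha_1^*,\dots,\alpha_d^*)}$ is dominated by its first term up to a multiplicative constant, giving the claimed bound $N^{d-1} 2^{-N h(\alpha_1^*,\dots,\alpha_d^*)}$.

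To upgrade to \eqref{rem_est_gen1} I would establish a \emph{sharp minimum} estimate
$$
h(\alpha) - h(\alpha_1^*,\dots,\alpha_d^*) \;\gtrsim\; \|\alpha - (\alpha_1^*,\dots,\alpha_d^*)\|_\infty, \qquad \alpha \in D,
$$
with constant depending only on $\overline r, \overline p, q, d$. This is the main obstacle and the entire reason the uniqueness hypothesis is needed. The estimate follows from the fact that $h$, being the maximum of finitely many affine functions on the compact convex polytope $D$, is convex and piecewise linear, so when the minimizer is unique the polyhedral optimality conditions force linear growth along every line segment issuing from $(\alpha_1^*,\dots,\alpha_d^*)$; compactness of $D$ promotes this into the global linear lower bound. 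The delicate point is that $(\alpha_1^*,\dots,\alpha_d^*)$ may lie on $\partial D$, so one has to argue locally on each face of $D$ containing the minimizer, then patch via compactness outside a neighborhood where a uniform gap $h(\alpha) - h(\alpha_1^*,\dots,\alpha_d^*) \ge c_0 > 0$ is available.

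Granted this sharp minimum, for fixed $m = N'$ the points $(m_1/m,\dots,m_d/m)$ form a $(d-1)$-dimensional lattice of spacing $\sim 1/N'$; those at $\ell_\infty$-distance $\asymp k/N'$ from $(\alpha_1^*,\dots,\alpha_d^*)$ number $\lesssim k^{d-1}$ and each satisfies $h \ge h(\alpha_1^*,\dots,\alpha_d^*) + ck/N'$. Hence their combined contribution is $\lesssim k^{d-1}\, 2^{-N' h(\alpha_1^*,\dots,\alpha_d^*)-ck}$, and summing $\sum_{k \ge 0} k^{d-1}2^{-ck} = O(1)$ yields $\sum_{m=N'} C_{\bar m} \lesssim 2^{-N' h(\alpha_1^*,\dots,\alpha_d^*)}$. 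A final geometric summation over $N' \ge N$ then gives \eqref{rem_est_gen1}.
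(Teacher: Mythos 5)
Your proposal is correct and follows essentially the same route as the paper: both rest on the identity $\varphi(m_1,\dots,m_d)=m\cdot h(m_1/m,\dots,m_d/m)$, obtain the $N^{d-1}$ factor from the $\lesssim (N')^{d-1}$ tuples at each level $m=N'$ plus geometric decay in $N'$, and upgrade to \eqref{rem_est_gen1} via a linear lower bound $h(\alpha)-h(\alpha^*)\gtrsim\|\alpha-\alpha^*\|$ forced by uniqueness of the minimizer of the piecewise affine convex function $h$ on the polytope $D$ (the paper states the equivalent bound $\varphi(t)\ge\varphi(N\alpha^*)+b\sum_j|t_j-N\alpha_j^*|$ on $G_N$). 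The only difference is that you sum over discrete shells while the paper compares with an integral over $\{t\ge N\}$; this is cosmetic.
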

\begin{proof}
The first order inequality in \eqref{rem_est_gen}, \eqref{rem_est_gen1} follows from \eqref{cmdef}.

Let us prove the second inequality in \eqref{rem_est_gen}, \eqref{rem_est_gen1}. From (\ref{del_m_d0}) we get:
$$
\sum \limits _{m\ge N} C_{\overline{m}} \underset{\overline{p},q,\overline{r},d}{\lesssim} \sum \limits _{m\ge N} 2^{-\varphi(\overline{m})} \underset{\overline{p},q,\overline{r},d}{\lesssim} \int \limits _{t\ge N, \, t_1, \dots,t_d\ge 0} 2^{-\varphi(t_1, \, \dots, \, t_d)}\, dt_1\dots dt_d =:\Sigma,
$$
where $t=t_1+\dots+t_d$. We set $\alpha_j=\frac{t_j}{t}$, $1\le j\le d$. Comparing \eqref{phi_m_def} and the definition of the function $h$, we see that $\varphi(t_1, \, \dots, \, t_d) = t \cdot h(\alpha_1, \, \dots, \, \alpha_d)$, $(\alpha_1, \, \dots, \, \alpha_d)\in D$. Let $E_t =\{(t_1, \, \dots, \, t_{d-1}):\; t_1+\dots+t_{d-1}\le t, \; t_j\ge 0, \; 1\le j\le d-1\}$. We have
$$
\Sigma = \int \limits _N^\infty \int \limits_{E_t}2^{-\varphi(t_1, \, \dots, \, t_{d-1}, \, t-t_1-\dots-t_{d-1})}\, dt_1\dots dt_{d-1}\, dt \le 
$$
$$
\le \int \limits _N^\infty 2^{-t\cdot h(\alpha_1^*, \, \dots, \, \alpha_d^*)} t^{d-1}\, dt \underset{\overline{p},q,\overline{r},d}{\lesssim} 2^{-N\cdot h(\alpha_1^*, \, \dots, \, \alpha_d^*)}N^{d-1}.
$$

If the minimum point of the function $h$ on $D$ is unique, then $(N\alpha_1^*, \, \dots, \, N\alpha_d^*)$ is the unique minimum point of the function $\varphi$ on $G_N:=\{(t_1, \, \dots, \, t_d):\; t_1+\dots+t_d\ge N, \; t_j\ge 0, \, 1\le j\le d\}$. From \eqref{phi_m_def} it follows that there is a number $b=b(\overline{p}, \, q, \, \overline{r}, \, d)> 0$ such that $\varphi(t_1, \, \dots, \, t_d)\ge \varphi(N\alpha_1^*, \, \dots, \, N\alpha_d^*) + b\sum \limits _{j=1}^d |t_j-N\alpha_j^*|$, $(t_1, \, \dots, \, t_d)\in G_N$. Therefore,
 $$\Sigma \underset{\overline{p},q,\overline{r},d}{\lesssim} 2^{-\varphi(N\alpha_1^*, \, \dots, \, N\alpha_d^*)} = 2^{-N\cdot h(\alpha_1^*, \, \dots, \, \alpha_d^*)}.$$
This completes the proof.
\end{proof}

Let $q>2$, and let the function $\tilde h$ and the set $\tilde D$ be defined in part 2 of Theorem \ref{main}. We denote
$$
\tilde D_{q/2} = \{(\alpha_1, \, \dots, \, \alpha_d, \, s)\in \tilde D:\; s=q/2\}.
$$

\begin{Lem}
\label{emb_qg2} Let $q>2$. Then for each $(\alpha_1, \, \dots, \, \alpha_d, \, q/2)\in \tilde D_{q/2}$ we have 
\begin{align}
\label{htilh} \tilde h(\alpha_1, \, \dots, \, \alpha_d, \, q/2) = \frac q2 \cdot h(2\alpha_1/q, \, \dots, \, 2\alpha_d/q).
\end{align}
In particular,
\begin{align}
\label{htilh1}
\min _{\tilde D_{q/2}}\tilde h = \frac q2 \min _D h.
\end{align}
\end{Lem}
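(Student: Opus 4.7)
The plan is to parametrise $\tilde D_{q/2}$ via the substitution $\beta_j := 2\alpha_j/q$, observe that this is a bijection onto $D$ (since $\alpha_j\ge 0$, $\sum_j\alpha_j = q/2$ iff $\beta_j\ge 0$, $\sum_j\beta_j = 1$), then establish the pointwise identity \eqref{htilh} by matching each $\tilde h_k$ at $s=q/2$ with the corresponding piece of $(q/2)h(\beta)$ for $k=1,2,3,4$ and dominating $\tilde h_5$ by the other four. The minimum identity \eqref{htilh1} is an immediate consequence.

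First I would compute the four easy pieces. For $\tilde h_1$: since $I=I_0$ and $r_j\alpha_j=(q/2)r_j\beta_j$, we get $\tilde h_1(\alpha,q/2)=(q/2)h_1(\beta)$. For $\tilde h_2$ at $s=q/2$, the key simplification is that $s-1=(q-2)/2$ and $1/2-1/q=(q-2)/(2q)$, so
$$\tfrac{1}{2}\cdot\tfrac{1/p_j-1/q}{1/2-1/q}\,(s-1)=\tfrac{q}{2p_j}-\tfrac{1}{2},$$
giving $\tilde h_2(\alpha,q/2)=(q/2)\max_{j\in J}(r_j\beta_j-1/p_j+1/q)$. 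Directly, $\tilde h_3(\alpha,q/2)=(q/2)\max_{j\in K}(r_j\beta_j-1/p_j+1/q)$. Since $J\cup K=J_0$, the maximum of $\tilde h_2$ and $\tilde h_3$ equals $(q/2)h_2(\beta)$. For $\tilde h_4$: the index sets collapse as $I'=I_0'$ and $J'\cup K=J_0'$, and $\lambda_{i,j}$ is defined by the same equation in both parts of Theorem \ref{main}, so $\tilde h_4(\alpha,q/2)=(q/2)h_3(\beta)$.

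The main step is to show $\tilde h_5(\alpha,q/2)\le(q/2)h(\beta)$, i.e.\ this term is absorbed by the other four at $s=q/2$. Since \eqref{1tlijriti} and \eqref{1tlijriti1} are linear in $(m_1,\dots,m_d)$, they hold for arbitrary non-negative reals by homogeneity; applying them with $m_\ell=\beta_\ell$ (so $m=1$), and using that the majorants on their right-hand sides are bounded by $h_2(\beta)$ or $h_3(\beta)$, hence by $h(\beta)$, we obtain for $i\in I\cup J'$, $j\in K'$
$$(1-\mu_{i,j})r_i\beta_i+\mu_{i,j}r_j\beta_j \;\le\; h(\beta)+\tfrac{1}{2}-\tfrac{1}{q}.$$
Multiplying by $q/2$ and adding the constant $-s/2+1/2=-(q-2)/4$ yields
$$\tilde h_5(\alpha,q/2) \;\le\; \tfrac{q}{2}\bigl(h(\beta)+\tfrac{1}{2}-\tfrac{1}{q}\bigr)-\tfrac{q-2}{4} \;=\; \tfrac{q}{2}\,h(\beta),$$
since $(q/2)(1/2-1/q)=(q-2)/4$ cancels exactly. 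Therefore $\tilde h(\alpha,q/2)=\max_{k\le 4}\tilde h_k(\alpha,q/2)=(q/2)h(\beta)$, which is \eqref{htilh}; passing to minima via the bijection $\alpha\leftrightarrow\beta$ then gives \eqref{htilh1}.

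The only real obstacle is the bookkeeping — verifying that the six index sets $I,J,K,I',J',K'$ collapse correctly to $I_0,J_0,I_0',J_0'$ at $s=q/2$, and that the numerical constant arising from the $(s-1)$-factor in $\tilde h_2$ simplifies exactly to $q/(2p_j)-1/2$ so that $\tilde h_2$ and $\tilde h_3$ merge cleanly. Both reductions are straightforward, and the nontrivial content of the lemma lies entirely in the precalibrated cancellation in the $\tilde h_5$ estimate.
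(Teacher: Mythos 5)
Your proposal is correct and follows essentially the same route as the paper: write out $\tilde h$ at $s=q/2$, observe that $\tilde h_1,\dots,\tilde h_4$ combine (after the arithmetic $\frac 12\cdot\frac{1/p_j-1/q}{1/2-1/q}\cdot\frac{q-2}{2}=\frac{q}{2p_j}-\frac12$ and the index-set identifications $I=I_0$, $J\cup K=J_0$, $I'=I_0'$, $J'\cup K=J_0'$) into $\frac q2 h(2\alpha/q)$, and absorb the $\mu_{i,j}$-term via \eqref{1tlijriti}, \eqref{1tlijriti1}. Your explicit verification of the cancellation in the $\tilde h_5$ bound is exactly the content the paper compresses into the citation of those two inequalities.
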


\begin{proof}
Let us check (\ref{htilh}). We have
$$
\tilde h(\alpha_1, \, \dots, \, \alpha_d, \, q/2) = \max \Bigl\{ \max _{p_j\ge q} r_j\alpha_j, \, \max _{p_j\le q} (r_j\alpha_j +1/2 - q/2p_j),
$$
$$
\max _{p_i> q, \, p_j< q} ((1-\lambda_{i,j})r_i\alpha_i + \lambda_{i,j}r_j\alpha_j), \, \max _{p_i> 2, \, p_j< 2} ((1-\mu_{i,j})r_i\alpha_i + \mu_{i,j}r_j\alpha_j+1/2-q/4)\Bigr\}=
$$
$$
= \max \Bigl\{ \max _{p_j\ge q} r_j\alpha_j, \, \max _{p_j\le q} (r_j\alpha_j +1/2 - q/2p_j),\, \max _{p_i> q, \, p_j< q} ((1-\lambda_{i,j})r_i\alpha_i + \lambda_{i,j}r_j\alpha_j)\Bigr\}=
$$
$$
=\frac q2 \cdot h(2\alpha_1/q, \, \dots, \, 2\alpha_d/q);
$$
the second equality follows from \eqref{1tlijriti}, \eqref{1tlijriti1}. This completes the proof.
\end{proof}

\begin{Lem}
\label{up_est_lem}
Let $\min _D h>0$, and let for each
$\overline{m}\in \N^d$ the number $k_{\overline{m}}\in \Z_+$ be given. Suppose that there is a number $C\in \N$ such that $\sum \limits _{\overline{m}\in \N^d}k_{\overline{m}}\le Cn$. Then
\begin{align}
\label{per_up} d_{Cn}(W^{\overline{r}}_{\overline{p}}(\mathbb{T}^d), \, L_q(\mathbb{T}^d)) \underset{\overline{p},q,\overline{r},d}{\lesssim} \sum \limits _{\overline{m}\in \N^d} d_{k_{\overline{m}}}\left(\cap _{j=1}^d 2^{-m_jr_j-m/q+m/p_j} B^{2^m}_{p_j}, \, l_q^{2^m}\right).
\end{align}
\end{Lem}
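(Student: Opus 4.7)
The plan is to build an approximating subspace of dimension at most $Cn$ block-by-block and to estimate the resulting error by the triangle inequality applied to the Littlewood--Paley decomposition.

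First, for every $\overline{m}\in \N^d$, Lemma \ref{up_est_lem_m} provides a subspace $L_{\overline{m}} \subset {\cal T}_{\overline{m}}\subset L_q(\mathbb{T}^d)$ with $\dim L_{\overline{m}} \le k_{\overline{m}}$ such that
$$
\sup_{f\in W^{\overline{r}}_{\overline{p}}(\mathbb{T}^d)} \inf_{g\in L_{\overline{m}}} \|\delta_{\overline{m}} f - g\|_{L_q(\mathbb{T}^d)} \underset{\overline{p},q,\overline{r},d}{\lesssim} e_{\overline{m}},
$$
where $e_{\overline{m}} := d_{k_{\overline{m}}}\bigl(\cap_{j=1}^d 2^{-m_j r_j - m/q + m/p_j} B^{2^m}_{p_j},\, l_q^{2^m}\bigr)$. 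Since $\sum_{\overline{m}} k_{\overline{m}} \le Cn$, only finitely many $k_{\overline{m}}$ are positive, so $L := \sum_{\overline{m}\in \N^d} L_{\overline{m}}$ is a (finite-dimensional) subspace of $L_q(\mathbb{T}^d)$ with $\dim L \le Cn$.

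Given $f \in W^{\overline{r}}_{\overline{p}}(\mathbb{T}^d)$, pick $g_{\overline{m}}\in L_{\overline{m}}$ realizing (up to the constant above) the best approximation of $\delta_{\overline{m}}f$ when $k_{\overline{m}}\ge 1$, and set $g_{\overline{m}}=0$ otherwise. Then $\tilde f := \sum_{\overline{m}} g_{\overline{m}}$ is a well-defined finite sum lying in $L$. Using the identity $f = \sum_{\overline{m}} \delta_{\overline{m}} f$ in $L_q(\mathbb{T}^d)$ from Theorem \ref{lit_pal}, the triangle inequality gives
$$
\|f - \tilde f\|_{L_q(\mathbb{T}^d)} \le \sum_{\overline{m}\in \N^d} \|\delta_{\overline{m}} f - g_{\overline{m}}\|_{L_q(\mathbb{T}^d)} \underset{\overline{p},q,\overline{r},d}{\lesssim} \sum_{\overline{m}\in \N^d} e_{\overline{m}},
$$
and taking the supremum over $f\in W^{\overline{r}}_{\overline{p}}(\mathbb{T}^d)$ yields the claimed upper bound on $d_{Cn}(W^{\overline{r}}_{\overline{p}}(\mathbb{T}^d), L_q(\mathbb{T}^d))$.

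The only point needing care is the legitimacy of the infinite triangle inequality above: we must know that the tail sum converges uniformly in $f$ so that the manipulations make sense and the series can be rearranged. For those $\overline{m}$ with $k_{\overline{m}}=0$, $\|\delta_{\overline{m}} f - g_{\overline{m}}\|_{L_q(\mathbb{T}^d)} = \|\delta_{\overline{m}} f\|_{L_q(\mathbb{T}^d)} \lesssim C_{\overline{m}}$ by \eqref{cmdef}, and the hypothesis $\min_D h>0$ together with Lemma \ref{rem_est_h} yields $\sum_{\overline{m}\in \N^d} C_{\overline{m}} <\infty$ uniformly for $f\in W^{\overline{r}}_{\overline{p}}(\mathbb{T}^d)$. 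This tail control (which also ensures the right-hand side of \eqref{per_up} need only be considered when it is finite) is the only mildly delicate step; the remainder is bookkeeping on the dyadic decomposition, combined with the block-wise information already supplied by Lemmas \ref{up_est_lem_m} and \ref{rem_est_h}.
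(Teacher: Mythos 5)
Your proposal is correct and follows essentially the same route as the paper: the paper likewise uses $\min_D h>0$ together with the tail estimate of Lemma \ref{rem_est_h} to justify that $f=\sum_{\overline{m}}\delta_{\overline{m}}f$ converges in $L_q(\mathbb{T}^d)$, and then applies Lemma \ref{up_est_lem_m} blockwise, the summation of the subspaces and the triangle inequality being the standard subadditivity of widths that you spell out explicitly.
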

\begin{proof}
Since $\min _{D}h>0$, it follows from
\eqref{rem_est_gen} that the sequence of partial sums $S_Nf:=\sum \limits _{m\le N} \delta _{\overline{m}}f$ is fundamental in $L_q(\mathbb{T}^d)$. On the other hand, $S_Nf\underset{N\to \infty}{\to} f$ in ${\cal S}'(\mathbb{T}^d)$. Hence $f\in L_q(\mathbb{T}^d)$ and $S_Nf\underset{N\to \infty}{\to} f$ in $L_q(\mathbb{T}^d)$; this yields
$$
f = \sum \limits _{N\in \N} \sum \limits _{\overline{m}\in \N^d:\, m=N} \delta _{\overline{m}}f
$$
(the series converges in $L_q(\mathbb{T}^d)$). It remains to apply Lemma \ref{up_est_lem_m}.
\end{proof}

\renewcommand{\proofname}{\bf Proof of Theorem \ref{main}}
\begin{proof} First we prove the upper estimate under assumption that $\min _{D}h>0$. Then we prove the lower estimate and obtain that if $\min _{D}h\le 0$, then $W^{\overline{r}}_{\overline{p}}(\mathbb{T}^d)$ is not compactly embedded  into $L_q(\mathbb{T}^d)$. Hence, under the conditions of Theorem \ref{main}, the inequality $\min _{D}h>0$ holds, since the inequality $\frac{\langle \overline{r}\rangle}{d}+ \frac 1q - \frac{\langle \overline{r}\rangle}{\langle \overline{p}\circ\overline{r}\rangle}>0$ implies the compact embedding.

Let $\min _{D}h>0$.

We set $q_*=\min\{q, \, 2\}$. Let $\overline{m}_*=(m_1^*, \, \dots, \, m_d^*)\in \R_+^d$, $2^{m_*}\in [n, \, n^{q_*/2}]$, $\varepsilon>0$ ($\overline{m}_*$ and $\varepsilon$ will be defined later from $\overline{p}$, $q$, $\overline{r}$ and $d$). We denote $$|\overline{m}-\overline{m}_*| := \sum \limits _{j=1}^d |m_j-m_j^*|.$$ Let 
\begin{align}
\label{k_vec_m_def}
k_{\overline{m}}= \begin{cases} 0 &\text{ for }2^m > n^{q_*/2} \\ \min \{\lfloor n\cdot 2^{-\varepsilon|\overline{m}-\overline{m}_*|}\rfloor, \, 2^m\} & \text{ for } 2^m \le n^{q_*/2}.\end{cases}
\end{align}
 Then
$$
\sum \limits _{\overline{m}\in \N^d} k_{\overline{m}} \le \sum \limits _{\overline{m}\in \N^d} n\cdot 2^{-\varepsilon|\overline{m}-\overline{m}_*|} \underset{\varepsilon, \, d}{\lesssim} n.
$$

We apply Lemma \ref{up_est_lem} and get the upper estimate for the right-hand side of \eqref{per_up}:
$$
\sum \limits _{\overline{m}\in \N^d} d_{k_{\overline{m}}}\left(\cap _{j=1}^d 2^{-m_jr_j-m/q+m/p_j} B^{2^m}_{p_j}, \, l_q^{2^m}\right) \le
$$
$$
\le \sum \limits _{2^m > n^{q_*/2}} d_0\left(\cap _{j=1}^d 2^{-m_jr_j-m/q+m/p_j} B^{2^m}_{p_j}, \, l_q^{2^m}\right)+
$$
$$
+\sum \limits _{ n\cdot 2^{-\varepsilon|\overline{m}-\overline{m}_*|}\le 2^m \le n^{q_*/2}} d_{k_{\overline{m}}}\left(\cap _{j=1}^d 2^{-m_jr_j-m/q+m/p_j} B^{2^m}_{p_j}, \, l_q^{2^m}\right).
$$

We consider the case $q>2$ (the case $q\le 2$ is simpler and can be considered similarly). Let $$S_1 = \sum \limits _{2^m > n^{q/2}} d_0\left(\cap _{j=1}^d 2^{-m_jr_j-m/q+m/p_j} B^{2^m}_{p_j}, \, l_q^{2^m}\right),$$
$$
S_{2,\varepsilon} = \sum \limits _{n\cdot 2^{-\varepsilon|\overline{m}-\overline{m}_*|}\le 2^m \le n^{q/2}} d_{k_{\overline{m}}}\left(\cap _{j=1}^d 2^{-m_jr_j-m/q+m/p_j} B^{2^m}_{p_j}, \, l_q^{2^m}\right).
$$
We prove that
\begin{align}
\label{s1est0} S_1 \underset{\overline{p},q,\overline{r},d}{\lesssim} n^{-\tilde h(\hat \alpha_1, \, \dots, \, \hat \alpha_d, \, \hat s)}.
\end{align}
We denote $c_*=\min _{\tilde D_{q/2}}\tilde h$, $\log x := \log _2 x$.
Applying \eqref{cmdef}, \eqref{rem_est_gen} with $N=\frac q2 \log n$, we get
$$
S_1 \underset{\overline{p},q,\overline{r},d}{\lesssim} 2^{-\frac{q \log n}{2}\min _D h}(\log n)^{d-1} \stackrel{\eqref{htilh1}}{=} n^{- c_*}(\log n)^{d-1}.$$
If $c_*> \tilde h(\hat \alpha_1, \, \dots, \, \hat\alpha_d, \, \hat s)$, then $n^{- c_*}(\log n)^{d-1} \underset{\overline{p},q,\overline{r},d}{\lesssim} n^{-\tilde h(\hat \alpha_1, \, \dots, \, \hat \alpha_d, \, \hat s)}$; this implies \eqref{s1est0}. If $c_*= \tilde h(\hat \alpha_1, \, \dots, \, \hat \alpha_d, \, \hat s)$, then the function $\tilde h$ has a unique minimum point on $\tilde D_{q/2}$; by Lemma \ref{emb_qg2}, the function $h$ also has a unique minimum point on $D$. Applying \eqref{rem_est_gen1}, we again get \eqref{s1est0}.

Now we estimate $S_{2,\varepsilon}$. First we consider the case $\varepsilon = 0$:
$$
S_{2,0}=\sum \limits _{n\le 2^m\le n^{q/2}} d_n\left(\cap _{j=1}^d 2^{-m_jr_j-m/q+m/p_j} B^{2^m}_{p_j}, \, l_q^{2^m}\right).
$$
Applying Theorems \ref{glus}, \ref{p_s} and \ref{fin_dim_inters}, we get
$$
S_{2,0}\underset{q}{\lesssim} \sum \limits _{n\le 2^m\le n^{q/2}} 2^{-\psi_n(\overline{m}, \, m)},
$$
where
\begin{align}
\label{psi_n_f_def_00}
\begin{array}{c}
\psi_n(t_1, \, \dots, \, t_d, \, t) = \max \Bigl\{ \max _{j\in I} r_jt_j, \, \max _{j\in J} \Bigl( r_jt_j -\frac 12\cdot \frac{1/p_j-1/q}{1/2-1/q}t + \frac 12\cdot \frac{1/p_j-1/q}{1/2-1/q} \log n\Bigr),
\\
\max _{j\in K} \Bigl(r_jt_j-\frac{t}{p_j} + \frac 12 \log n\Bigr), \, \max _{i\in I', \, j\in J'\cup K}((1-\lambda_{i,j})r_it_i + \lambda_{i,j}r_jt_j),
\\
\max _{i\in I\cup J', \, j\in K'} \Bigl((1-\mu_{i,j})r_it_i+\mu_{i,j}r_jt_j -\frac t2 + \frac 12\log n\Bigr)\Bigr\}.
\end{array}
\end{align}
We set
$$
G_n =\Bigl\{(t_1, \, \dots, \, t_d)\in \R^d_+:\; \log n \le t_1+\dots+t_d \le \frac q2 \log n\Bigr\}.
$$
Then
$$
\sum \limits _{n\le 2^m\le n^{q/2}} 2^{-\psi_n(\overline{m}, \, m)} \underset{\overline{p}, \, \overline{r}, \, q, \, d}{\lesssim} \int \limits _{G_n} 2^{-\psi_n(t_1, \, \dots, \, t_d, \, t_1+\dots+t_d)}\, dt_1\dots dt_d.
$$
Notice that 
\begin{align}
\label{psi_n_t1td}
\psi_n(t_1, \, \dots, \, t_d, \, t) = \tilde h (t_1/\log n, \, \dots, \, t_d/\log n, \, t/\log n)\cdot\log n. 
\end{align}
Since the minimum point of the function $\tilde h$ on the set $\tilde D$ is unique, the minimum point of the function $f_n(t_1, \, \dots, \, t_d):=\psi_n(t_1, \, \dots, \, t_d, \, t_1+\dots+t_d)$ on $G_n$ is also unique and has the form $(\hat \alpha_1, \, \dots, \, \hat \alpha_d)\log n$. In addition, if $\hat s=\hat \alpha_1+\dots + \hat \alpha_d>1$, then $(\hat \alpha_1, \, \dots, \, \hat \alpha_d)\log n$ in the unique minimum point of the function $f_n$ on the set
$$
\hat G_n =\Bigl\{(t_1, \, \dots, \, t_d)\in \R^d_+:\; t_1+\dots+t_d \le \frac q2 \log n\Bigr\}.
$$

We denote 
\begin{align}
\label{m_st_m1a1ln}
\overline{m}_*=(m_1^*, \, \dots, \, m_d^*) = (\hat \alpha_1, \, \dots, \, \hat \alpha_d)\log n.
\end{align}

By (\ref{psi_n_f_def_00}), (\ref{psi_n_t1td}), there is $c_{\overline{p},q,\overline{r},d} > 0$ such that
\begin{align}
\label{psi_t_low} \begin{array}{c}\psi_n(t_1, \, \dots, \, t_d, \, t_1+\dots+t_d) \ge \tilde h(\hat \alpha_1, \, \dots, \, \hat \alpha_d, \, \hat s)\log n + c_{\overline{p},q,\overline{r},d} \sum \limits _{j=1}^d |t_j-m^*_j|, \\ (t_1, \, \dots, \, t_d)\in \begin{cases} G_n, & \hat s = 1, \\ \hat G_n, & \hat s > 1.\end{cases}\end{array}
\end{align}

This implies the estimate
$$
\int \limits _{G_n} 2^{-\psi_n(t_1, \, \dots, \, t_d, \, t_1+\dots+t_d)}\, dt_1\dots dt_d \underset{\overline{p}, \, \overline{r}, \, q, \, d}{\lesssim} n^{-\tilde h(\hat \alpha_1, \, \dots, \, \hat \alpha_d, \, \hat s)};
$$
i.e., $S_{2,0}\underset{\overline{p}, \, \overline{r}, \, q, \, d}{\lesssim} n^{-\tilde h(\hat \alpha_1, \, \dots, \, \hat \alpha_d, \, \hat s)}$.

Now we get the estimates for $S_{2,\varepsilon}$, where $\varepsilon >0$ is sufficiently small. We set $$G_{n,\varepsilon} = \Bigl\{(t_1, \, \dots, \, t_d)\in \R^d_+:\; \log n - \varepsilon \sum \limits _{j=1}^d |t_j-m_j^*| \le t_1+\dots+t_d\le \frac q2 \log n\Bigr\}.$$
From \eqref{psi_t_low} it follows that for small $\varepsilon>0$
\begin{align}
\label{psi_estlow2} \begin{array}{c}\psi_n(t_1, \, \dots, \, t_d, \, t_1+\dots+t_d) \ge \tilde h(\hat \alpha_1, \, \dots, \, \hat \alpha_d, \, \hat s)\log n + \frac{c_{\overline{p},q,\overline{r},d}}{2} \sum \limits _{j=1}^d |t_j-m^*_j|, \\ (t_1, \, \dots, \, t_d) \in G_{n,\varepsilon}\end{array}
\end{align}
(the maximal possible value of $\varepsilon$, for which \eqref{psi_estlow2} holds, is determined from $\overline{p}$, $q$, $\overline{r}$, $d$).

Now we apply Theorems \ref{glus}, \ref{p_s} and \ref{fin_dim_inters} together with \eqref{k_vec_m_def}. Since $2^{m_*}\in [n, \, n^{q/2}]$, we have $k_n \asymp n\cdot 2^{-\varepsilon|\overline{m}-\overline{m}_*|}$ for sufficiently small $\varepsilon>0$ for $n\cdot 2^{-\varepsilon|\overline{m}-\overline{m}_*|}\le 2^m \le n^{q/2}$. This yields that for some $b=b(\overline{p}, \, \overline{r}, \, q, \, d)>0$
$$
S_{2,\varepsilon} \underset{q}{\lesssim} \sum \limits _{n\cdot 2^{-\varepsilon|\overline{m}-\overline{m}_*|}\le 2^m \le n^{q/2}} 2^{-\psi_n(\overline{m}, \, m)}\cdot 2^{\varepsilon b |\overline{m}-\overline{m}_*|} \underset{\overline{p}, \, \overline{r}, \, q, \, d}{\lesssim} 
$$
$$
\lesssim \int \limits _{G_{n,\varepsilon}} 2^{-\psi(t_1, \, \dots, \, t_d, \, t_1+\dots+t_d) + \varepsilon b\sum \limits _{j=1}^d |t_j-m^*_j|}\, dt_1\dots dt_d \stackrel{\eqref{psi_estlow2}}{\le}
$$
$$
\le \int \limits _{G_{n,\varepsilon}} 2^{-\tilde h(\hat \alpha_1, \, \dots, \, \hat \alpha_d, \, \hat s)\log n - (c_{\overline{p},q,\overline{r},d}/2-b\varepsilon) \sum \limits _{j=1}^d |t_j-m^*_j|}\, dt_1\dots dt_d \underset{\overline{p}, \, \overline{r}, \, q, \, d}{\lesssim}
$$
$$
\lesssim n^{-\tilde h(\hat \alpha_1, \, \dots, \, \hat \alpha_d, \, \hat s)},
$$
if $\varepsilon >0$ is small. This together with \eqref{s1est0} yields the desired estimate for the widths.

Now we obtain the lower estimate. We again consider the more complicated case $q>2$. Let $\overline{m} \in \N^d$, $2n\le 2^m \le n^{q/2}$, $\alpha_j = m_j/\log n$, $1\le j\le d$, $s=\alpha_1+\dots+\alpha_d$. From Lemma \ref{low_est_lem} and Theorems \ref{glus}, \ref{p_s}, \ref{fin_dim_inters} we get
\begin{align}
\label{dnwlow_est}
\begin{array}{c}
d_n(W^{\overline{r}}_{\overline{p}}(\mathbb{T}^d), \, L_q(\mathbb{T}^d)) \underset{\overline{p}, \, \overline{r}, \, q, \, d}{\gtrsim} d_n \Bigl(\cap _{i=1}^d 2^{-m_ir_i-m/q+m/p_i}B^{2^m}_{p_i}, \, l_q^{2^m}\Bigr) \underset{q}{\asymp} \\ \asymp 2^{-\psi_n(\overline{m}, \, m)} \stackrel{\eqref{psi_n_t1td}}{=} n^{-\tilde h( \alpha_1, \, \dots, \, \alpha_d, \, s)}.
\end{array}
\end{align}
Let the point $\overline{m}_*\in G_n$ be defined by \eqref{m_st_m1a1ln}, and let $\overline{m}\in G_n$ be the nearest point to $\overline{m}_*$ (with respect to Euclidean norm) with integer positive coordinates, such that $m\ge \log(2n)$. Then
\begin{align}
\label{nth_nth}
n^{-\tilde h( \alpha_1, \, \dots, \, \alpha_d, \, s)} \underset{\overline{p}, \, \overline{r}, \, q, \, d}{\asymp} n^{\tilde h(\hat \alpha_1, \, \dots, \, \hat\alpha_d, \, \hat s)},
\end{align}
which implies the desired lower estimate for the widths.

If $\min _D h\le 0$, then $\min _{\tilde D} \tilde h\le\min _{\tilde D_{q/2}} \tilde h\le 0$ (see Lemma \ref{emb_qg2}); this together with \eqref{dnwlow_est}, \eqref{nth_nth} yields that $$d_n(W^{\overline{r}}_{\overline{p}}(\mathbb{T}^d), \, L_q(\mathbb{T}^d)) \underset{\overline{p}, \, \overline{r}, \, q, \, d}{\gtrsim} 1;$$ i.e., $W^{\overline{r}}_{\overline{p}}(\mathbb{T}^d)$ is not compactly embedded into $L_q(\mathbb{T}^d)$.

This completes the proof.
\end{proof}

\begin{Rem}
\label{h_otr} It was proved that if $\min _D h\le 0$, then $d_n(W^{\overline{r}}_{\overline{p}}(\mathbb{T}^d), \, L_q(\mathbb{T}^d)) \underset{\overline{p}, \, \overline{r}, \, q, \, d}{\gtrsim} 1$. For $q>2$, the same assertion holds if $\min _{\tilde D} \tilde h\le 0$; it follows from \eqref{dnwlow_est}, \eqref{nth_nth}.
\end{Rem}

\renewcommand{\proofname}{\bf Proof}

\section{Proof of Theorem \ref{main1}}

By Theorem \ref{main}, it suffices to find the minimum point of the function $h$ on $D$ for $q\le 2$, and the minimum point of the function $\tilde h$ on $\tilde D$ for $q>2$.

We consider the more complicated case $q>2$ (for $q\le 2$, the arguments are similar; here we use Lemma \ref{expl1}).

First we prove Theorem \ref{main1} under the following assumption: $p_i\notin \{2, \, q\}$, $1\le i\le d$. Then $I=I'$, $J=J'$, $K=K'$.

In what follows, the numbers $\lambda_{i,j}$ and $\mu_{i,j}$ are defined by \eqref{lam_ij_mu_ij_def}.

\begin{Lem}
\label{oblast}
Let $p_i\notin \{2, \, q\}$, $\alpha_i\ge 0$ $(1\le i\le d)$, $\alpha_1+ \dots+\alpha_d=s$, $1\le s\le q/2$.
\begin{enumerate}
\item Let $j\in I$. Then $\tilde h(\alpha_1, \, \dots, \, \alpha_d, \, s)= r_j\alpha_j$ if and only if $\alpha_jr_j - \alpha_ir_i\ge 0$, $1\le i\le d$.
\item Let $j\in J$. Then $$\tilde h(\alpha_1, \, \dots, \, \alpha_d, \, s)=r_j\alpha_j - \frac 12\cdot \frac{1/p_j-1/q}{1/2-1/q}(s-1)$$
if and only if
$$
\alpha_jr_j - \alpha_ir_i\ge \frac 12\cdot \frac{1/p_j-1/p_i}{1/2-1/q}(s-1), \quad 1\le i\le d.
$$
\item Let $j\in K$. Then $$\tilde h(\alpha_1, \, \dots, \, \alpha_d, \, s)=r_j\alpha_j - \frac{s}{p_j} + \frac 12$$
if and only if
$$
\alpha_jr_j - \alpha_ir_i\ge \frac{s}{p_j} - \frac{s}{p_i}, \quad 1\le i\le d.
$$
\item Let $i\in I$, $j\in J\cup K$. Then
$$
\tilde h(\alpha_1, \, \dots, \, \alpha_d, \, s)=(1-\lambda_{i,j})r_i\alpha_i+ \lambda_{i,j} r_j\alpha_j
$$
if and only if
$$
\alpha_ir_i - \alpha_jr_j\le 0, \quad \alpha_ir_i - \alpha_jr_j\ge \frac 12\cdot \frac{1/p_i-1/p_j}{1/2-1/q}(s-1),
$$
$$
\frac{\alpha_ir_i - \alpha_jr_j}{1/p_i-1/p_j} \ge \frac{\alpha_ir_i - \alpha_kr_k}{1/p_i-1/p_k}, \quad k\in J\cup K,
$$
$$
\frac{\alpha_ir_i - \alpha_jr_j}{1/p_i-1/p_j} \le \frac{\alpha_kr_k - \alpha_jr_j}{1/p_k-1/p_j}, \quad k\in I.
$$
\item Let $i\in I\cup J$, $j\in K$. Then
$$
\tilde h(\alpha_1, \, \dots, \, \alpha_d, \, s)=(1-\mu_{i,j})r_i\alpha_i+ \mu_{i,j} r_j\alpha_j -\frac s2 + \frac 12
$$
if and only if
$$
\alpha_ir_i - \alpha_jr_j\le \frac 12\cdot \frac{1/p_i-1/p_j}{1/2-1/q}(s-1), \quad \alpha_ir_i - \alpha_jr_j\ge \frac{s}{p_i} - \frac{s}{p_j},
$$
$$
\frac{\alpha_ir_i - \alpha_jr_j}{1/p_i-1/p_j} \ge \frac{\alpha_ir_i - \alpha_kr_k}{1/p_i-1/p_k}, \quad k\in K,
$$
$$
\frac{\alpha_ir_i - \alpha_jr_j}{1/p_i-1/p_j} \le \frac{\alpha_kr_k - \alpha_jr_j}{1/p_k-1/p_j}, \quad k\in I\cup J.
$$
\end{enumerate}
\end{Lem}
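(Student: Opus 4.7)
My plan is to unfold the equation $\tilde h(\alpha_1, \dots, \alpha_d, s) = \tilde h_k(i^*)$, for the claimed pair $(k, i^*)$ in each case, into the pointwise system $\tilde h_k(i^*) \ge \tilde h_\ell(i)$ over every admissible $(\ell, i)$, and then simplify each such inequality until it matches (or combines to give) one of the displayed conditions. Under the standing assumption $p_i \notin \{2, q\}$ for all $i$, one has $I = I'$, $J = J'$, $K = K'$ pairwise disjoint, every interpolation weight $\lambda_{i,j}, \mu_{i,j}$ lies in the open interval $(0, 1)$, and $1/2 - 1/q > 0$. The main algebraic identities are
$1 - \lambda_{i,j} = (1/p_j - 1/q)/(1/p_j - 1/p_i)$ and $1 - \mu_{i,j} = (1/p_j - 1/2)/(1/p_j - 1/p_i)$,
which convert the interpolation weights into ratios of differences of reciprocals of $p$'s.

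For cases 1--3 (single-index claimed maximum), I verify sufficiency by noting that every competitor $\tilde h_\ell(i)$ differs from $r_i\alpha_i$ by a nonnegative constant under the hypotheses: for $i \in J$ the shift $\tfrac{1}{2}\tfrac{1/p_i-1/q}{1/2-1/q}(s-1)$ is nonnegative because $p_i \le q$ and $s \ge 1$, and similarly for $i \in K$ the shift $s/p_i - 1/2$ is nonnegative; the convex-combination competitors $\tilde h_4, \tilde h_5$ are dominated since both of their two endpoints $r_a\alpha_a, r_b\alpha_b$ already are, and the $\tilde h_5$-correction $-s/2+1/2$ is nonpositive. For necessity, I apply the $\tilde h_4$ competitor $(1-\lambda_{j,i})r_j\alpha_j + \lambda_{j,i}r_i\alpha_i$ with the claimed $j$ and any $i \in J \cup K$; the resulting inequality collapses, after cancellation of $\lambda_{j,i} > 0$, to $r_j\alpha_j \ge r_i\alpha_i$, and the $\tilde h_1$ competitor handles $i \in I$. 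Cases 2 and 3 are analogous, the common $s$-dependent shifts cancelling on both sides of each competitor comparison.

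For cases 4--5 (convex-combination claimed maximum), the two endpoint inequalities in case 4 come from $\tilde h \ge \tilde h_1(i)$ (which reduces via the identity above to $r_i\alpha_i \le r_j\alpha_j$) and from $\tilde h \ge \tilde h_2(j)$ (which, after the same identity, collapses to $r_i\alpha_i - r_j\alpha_j \ge \tfrac{1}{2}\cdot\tfrac{1/p_i - 1/p_j}{1/2 - 1/q}(s-1)$). The slope conditions arise from comparisons with other $\tilde h_4$-candidates sharing one endpoint: writing $\tilde h_4(i, j) = r_i\alpha_i + \lambda_{i,j}(r_j\alpha_j - r_i\alpha_i)$ and substituting $\lambda_{i,j} = (1/p_i - 1/q)/(1/p_i - 1/p_j)$, the inequalities $\tilde h_4(i, j) \ge \tilde h_4(i, k)$ and $\tilde h_4(i, j) \ge \tilde h_4(a, j)$ reduce, after cancelling the common factor $1/p_i - 1/q$ (which is negative for $p_i > q$ and flips the inequality direction accordingly), exactly to the stated slope conditions. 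Case 5 is treated symmetrically, with $\mu_{i,j}$ replacing $\lambda_{i,j}$ and $1/2$ replacing $1/q$ throughout.

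The main obstacle is the cross-type comparisons $\tilde h_4$ vs $\tilde h_5$ in case 4 and $\tilde h_5$ vs $\tilde h_4$ in case 5. For these I exploit the identity $\mu_{i,j} - \lambda_{i,j} = (1/2 - 1/q)/(1/p_j - 1/p_i)$: it shows that $\tilde h_4(i, j) \ge \tilde h_5(i, j)$ is equivalent to the upper endpoint inequality $r_i\alpha_i - r_j\alpha_j \ge \tfrac{1}{2}\tfrac{1/p_i - 1/p_j}{1/2 - 1/q}(s-1)$ already imposed, and cross-comparisons with differing endpoints reduce, via the same substitution plus the preceding slope conditions, to combinations already established. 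These manipulations are essentially the analogues in the $\tilde h$-coordinates of the reductions used to prove \eqref{1tlijriti}--\eqref{1tlijriti1}; once they are carried out, the necessity and sufficiency of each displayed system is established and the proof is complete.
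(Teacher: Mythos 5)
Your necessity argument coincides with the paper's: for each case one writes down the comparison of the claimed maximizer with a suitably chosen single competitor (a shared-endpoint $\tilde h_4$ or $\tilde h_5$ term, or a same-type term) and cancels the positive weight $\lambda$ or $\mu$; your algebraic identities for $1-\lambda_{i,j}$, $1-\mu_{i,j}$ and $\mu_{i,j}-\lambda_{i,j}$ are all correct, and the reduction of $\tilde h_4(i,j)\ge\tilde h_5(i,j)$ to the second endpoint inequality of case 4 checks out. The gap is in sufficiency for cases 4 and 5 (and, to a lesser extent, cases 2--3). There the ``only if'' list contains finitely many inequalities, but the ``if'' direction must dominate \emph{every} competitor, including $\tilde h_1(a)$, $\tilde h_2(a)$, $\tilde h_3(a)$ with $a\notin\{i,j\}$, $\tilde h_4(a,b)$ with $a\ne i$ and $b\ne j$, and $\tilde h_5(a,b)$ with $\{a,b\}\cap\{i,j\}=\varnothing$. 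Your sketch only treats comparisons sharing an index with $(i,j)$ and dismisses the rest as ``combinations already established''; that is precisely the nontrivial content. A workable repair: set $x_k=1/p_k$, $y_k=r_k\alpha_k$, and let $L_{i,j}$ be the affine function through $(x_i,y_i)$, $(x_j,y_j)$, so that $\tilde h_4(i,j)=L_{i,j}(1/q)$ and $\tilde h_5(i,j)=L_{i,j}(1/2)-(s-1)/2$. The two slope conditions say exactly that every point $(x_k,y_k)$ lies on or below the line $L_{i,j}$; condition $\alpha_ir_i\le\alpha_jr_j$ makes $L_{i,j}$ nondecreasing; condition (ii) makes $u\mapsto L_{i,j}(u)-\frac12\frac{u-1/q}{1/2-1/q}(s-1)$ nonincreasing; and $s\le q/2$ gives $-su+\tfrac12\le-\frac12\frac{u-1/q}{1/2-1/q}(s-1)$ for $u\ge\tfrac12$. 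Combining these with convexity (a competitor $L_{a,b}(1/q)$ is a convex combination of $y_a,y_b$, both under the line) yields all the missing comparisons. Without some such argument the ``if'' direction is unproved.

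It is worth knowing that the paper avoids this bookkeeping entirely: it proves sufficiency \emph{indirectly}. Assuming the conditions hold but $\tilde h(\alpha_1^*,\dots,\alpha_d^*,s^*)$ strictly exceeds the claimed value, it picks a nearby lattice point $\overline{m}=\overline{\alpha}\log n$, notes that the hypotheses of the corresponding case of Lemma \ref{expl2} are then satisfied for the intersection of balls $\cap_j 2^{-m_jr_j-m/q+m/p_j}B_{p_j}^{2^m}$, so that Lemma \ref{expl2} evaluates $d_n$ of that intersection as $n^{-r_j\alpha_j}$ (say), while \eqref{dnwlow_est} evaluates the same width as $n^{-\tilde h(\alpha,s)}\le n^{-r_j\alpha_j-c}$ --- a contradiction. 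So the combinatorial verification you are attempting was in effect already done once, at the level of the finite-dimensional sets, in Lemma \ref{expl2} and \cite{vas_ball_inters}, and the paper simply reuses it. Your direct route is legitimate and arguably more self-contained, but it must actually carry out the full set of comparisons rather than assert them.
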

\begin{proof}
First we prove that the corresponding conditions are necessary. In case 1, we use the inequalities $\alpha_jr_j\ge \alpha_ir_i$ for $i\in I$ and $r_j\alpha_j \ge (1-\lambda_{j,i})r_j\alpha_j+\lambda_{j,i}r_i\alpha_i$ for $i\in J\cup K$. In case 2, we apply the inequalities
$$
r_j\alpha_j - \frac 12\cdot \frac{1/p_j-1/q}{1/2-1/q}(s-1) \ge r_i\alpha_i - \frac 12\cdot \frac{1/p_i-1/q}{1/2-1/q}(s-1), \quad i\in J,
$$
$$
r_j\alpha_j - \frac 12\cdot \frac{1/p_j-1/q}{1/2-1/q}(s-1) \ge (1-\lambda_{i,j})r_i\alpha_i+\lambda_{i,j}r_j\alpha_j, \quad i\in I,
$$
$$
r_j\alpha_j - \frac 12\cdot \frac{1/p_j-1/q}{1/2-1/q}(s-1) \ge (1-\mu_{j,i})r_j\alpha_j + \mu_{j,i}r_i\alpha_i -\frac s2 + \frac 12, \quad i\in K.
$$
In case 3, we use the inequalities
$$
r_j\alpha_j+\frac 12 -\frac{s}{p_j} \ge r_i\alpha_i+\frac 12 -\frac{s}{p_i}, \quad i\in K,
$$
$$
r_j\alpha_j+\frac 12 -\frac{s}{p_j} \ge (1-\mu_{i,j})\alpha_ir_i + \mu_{i,j} \alpha_jr_j +\frac 12 -\frac s2, \quad i\in I\cup J.
$$
In case 4, we use the inequalities
$$
(1-\lambda_{i,j})r_i\alpha_i+ \lambda_{i,j} r_j\alpha_j\ge r_i\alpha_i,
$$
$$
(1-\lambda_{i,j})r_i\alpha_i+ \lambda_{i,j} r_j\alpha_j\ge \alpha_jr_j - \frac 12\cdot \frac{1/p_j-1/q}{1/2-1/q}(s-1) \quad \text{for }j\in J,
$$
$$
(1-\lambda_{i,j})r_i\alpha_i+ \lambda_{i,j} r_j\alpha_j\ge (1-\mu_{i,j})r_i\alpha_i+ \mu_{i,j} r_j\alpha_j - \frac 12(s-1)\quad \text{for }j\in K,
$$
$$
(1-\lambda_{i,j})r_i\alpha_i+ \lambda_{i,j} r_j\alpha_j\ge (1-\lambda_{i,k})r_i\alpha_i+ \lambda_{i,k} r_k\alpha_k, \quad k\in J\cup K,
$$
$$
(1-\lambda_{i,j})r_i\alpha_i+ \lambda_{i,j} r_j\alpha_j\ge (1-\lambda_{k,j})r_k\alpha_k+ \lambda_{k,j} r_j\alpha_j, \quad k\in I.
$$
Case 5 is similar to case 4.

Now we prove that the conditions are sufficient. We consider case 1 (the other cases are similar). Let $\alpha_1^*+\dots+\alpha_d^*=s^*\in [1, \, q/2]$, $\alpha_j^*\ge 0$ ($1\le j\le d$), and let $r_j\alpha_j^*\ge r_i\alpha_i^*$ for all $i=1, \, \dots, \, d$, but $\tilde h(\alpha_1^*, \, \dots, \, \alpha_d^*, \, s^*) > r_j\alpha_j^*$. Then there exist $c>0$ and an open subset $U$ of the set $$\{(\alpha_1, \, \dots, \, \alpha_d, \, s):\alpha_1+\dots+\alpha_d=s,\; \alpha_i\ge 0, \; 1\le s\le q/2,\; r_j\alpha_j\ge r_i\alpha_i, \, 1\le i\le d\}$$ such that for all $(\alpha_1, \, \dots, \, \alpha_d, \, s)\in U$ the inequality
\begin{align}
\label{til_h_rjaj_c}
\tilde h(\alpha_1, \, \dots, \, \alpha_d, \, s)-r_j\alpha_j \ge c 
\end{align}
holds.

For sufficiently large $n\in \N$ there is $(\alpha_1, \, \dots, \, \alpha_d, \, s)\in U$ such that $m_k:=\alpha_k\log n \in \N$ ($1\le k\le d$), $m\ge \log(2n)$; recall that $m:=m_1+\dots+m_d$. Then
$$
2^{-r_jm_j-m/q+m/p_j} \cdot 2^{m(1/p_i-1/p_j)} \le 2^{-r_im_i-m/q+m/p_i}, \quad 1\le i\le d.
$$
By Lemma \ref{expl2} (case 2),
$$
d_n(\cap _{i=1}^d 2^{-r_im_i-m/q+m/p_i}B_{p_i}^{2^m}, \, l_q^{2^m}) \underset{q}{\asymp} 2^{-m_jr_j} = n^{-r_j\alpha_j}.
$$
On the other hand, by \eqref{dnwlow_est}, $$d_n(\cap _{i=1}^d 2^{-r_im_i-m/q+m/p_i}B_{p_i}^{2^m}, \, l_q^{2^m}) \underset{q}{\asymp} n^{-\tilde h(\alpha_1, \, \dots, \, \alpha_d, \, s)}\stackrel{\eqref{til_h_rjaj_c}}{\le} n^{-r_j\alpha_j-c}.$$ We arrived to a contradiction.
\end{proof}

{\bf Proof of Theorem \ref{main1} in the case $p_i\notin\{2, \, q\}$, $1\le i\le d$.}

We define the points
\begin{align}
\label{xi_k_def14}
\xi_k = (\alpha_1^k, \, \dots, \, \alpha_d^k, \, s^k), \quad 1\le k\le 4,
\end{align}
as follows: $s^1=s^2=1$, $s^3=s^4=\frac q2$,
\begin{align}
\label{aj12}
\alpha^1_j = \frac{1/r_j}{\sum _{i=1}^d 1/r_i}, \quad
\alpha^2_j = \frac{1-\sum \limits _{i=1}^d\frac{1}{r_i}(1/p_i-1/p_j)}{r_j\sum \limits _{i=1}^d 1/r_i}, \quad 1\le j\le d,
\end{align}
\begin{align}
\label{aj34}
\alpha^3_j=\frac{q}{2}\alpha_j^2, \quad \alpha^4_j = \frac{q}{2}\alpha_1^j, \quad 1\le j\le d.
\end{align}
Notice that by (\ref{dop_usl}) we have $\alpha^k_j>0$, $1\le k\le 4$, $1\le j\le d$.

We show that the minimum of the function $\tilde h$ on $\tilde D$ is attained only in $\xi_1$, $\xi_2$ or $\xi_3$, and evaluate $\tilde h$ in these points.

We introduce some more notation. Let $\hat l_{m,t}$ be the segments that join $\xi_m$ and $\xi_t$, $1\le m, \, t\le 4$, $m\ne t$. For $1\le k\le d$ we define the segments $l_k$, $\tilde l_k$ and $\hat l_k$ as follows: $l_k$ are defined by the conditions
\begin{align}
\label{l_k_def} \begin{array}{c} r_1\alpha_1=\dots=r_{k-1}\alpha_{k-1}= r_{k+1}\alpha_{k+1}=\dots =r_d\alpha_d,\\ \alpha_1+\dots +\alpha_d=s=1, \; r_k\alpha_k- r_j\alpha_j \le 0,\;j\ne k,\; \alpha_i\ge 0, \; 1\le i\le d, \end{array}
\end{align}
$\hat l_k$ are defined by
\begin{align}
\label{hat_l_k_def} 
\begin{array}{c}
r_1\alpha_1-\frac{1}{p_1} = \dots = r_{k-1}\alpha_{k-1}-\frac{1}{p_{k-1}} = r_{k+1}\alpha_{k+1}-\frac{1}{p_{k+1}}=\dots = r_d\alpha_d-\frac{1}{p_d},
\\
\alpha_1+\dots+\alpha_d=s=1,
\\
r_k\alpha_k-r_j\alpha_j\le \frac{1}{p_k}-\frac{1}{p_j}, \;j\ne k,\; \alpha_i\ge 0, \; 1\le i\le d,
\end{array}
\end{align}
and $\tilde l_k$, by
\begin{align}
\label{til_l_k_def} 
\begin{array}{c}
r_1\alpha_1-\frac{q}{2p_1} = \dots = r_{k-1}\alpha_{k-1}-\frac{q}{2p_{k-1}} = r_{k+1}\alpha_{k+1}-\frac{q}{2p_{k+1}}=\dots = r_d\alpha_d-\frac{q}{2p_d},
\\
\alpha_1+\dots+\alpha_d=s=q/2, \\
r_k\alpha_k-r_j\alpha_j\le \frac{q}{2p_k}-\frac{q}{2p_j}, \;j\ne k,\; \alpha_i\ge 0, \; 1\le i\le d.
\end{array}
\end{align}

Notice that $\xi_1\in l_k$, $\xi_2\in \hat l_k$, $\xi_3\in \tilde l_k$ (they are endpoints of the corresponding segments); the systems of equations and inequalities \eqref{l_k_def}--\eqref{til_l_k_def} have the same matrices. Hence the segments $l_k$, $\tilde l_k$ and $\hat l_k$ have the form
\begin{align}
\label{lk_vk} l_k = \xi_1 + tv_k, \; 0\le t\le \tau_k; \; \hat l_k = \xi_2 + tv_k, \; 0\le t\le \hat\tau_k; \; \tilde l_k = \xi_3 + tv_k, \; 0\le t\le \tilde \tau_k;
\end{align}
here $\tau_k$, $\hat \tau_k$, $\tilde \tau_k$ are positive numbers.

We denote $\xi_{1,k}= \xi_1+\tau_kv_k$ (it is the second endpoint of $l_k$). Then $\xi_{1,k}$ is defined by
\begin{align}
\label{ksi_k_def}
\alpha_k=0, \; r_1\alpha_1=\dots=r_{k-1}\alpha_{k-1}= r_{k+1}\alpha_{k+1}=\dots =r_d\alpha_d, \; \alpha_1+\dots +\alpha_d=s=1.
\end{align}
We set 
\begin{align}
\label{psi_j_def_jk} \psi_j(\alpha_1, \, \dots, \, \alpha_d, \, s) = r_j\alpha_j, \quad 1\le j\le d.
\end{align}
Then
\begin{align}
\label{hxi1xi4}
\begin{array}{c}
\psi_j(\xi_1) \stackrel{\eqref{aj12}}{=} \frac{1}{\sum \limits _{i=1}^d 1/r_i}, \quad \psi_j(\xi_4) \stackrel{\eqref{aj34}}{=} \frac{q}{2\sum \limits _{i=1}^d 1/r_i}, \quad 1\le j\le d, \\ \psi_j(\xi_{1,k}) \stackrel{\eqref{ksi_k_def}}{=} \frac{1}{\sum \limits _{i\ne k}1/r_i}, \quad j\ne k.
\end{array}
\end{align}

The set $\tilde D$ is divided into polyhedrons such that the restriction of $\tilde h$ on each of these polyhedrons is affine. We find their vertices with strictly positive $\alpha_j$ and the edges that come out from these vertices.

Denote by $V$ one of these polyhedrons. 
\begin{enumerate}
\item Let $V = \{(\alpha_1, \, \dots, \, \alpha_d, \, s)\in \tilde D:\; \tilde h(\alpha_1, \, \dots, \, \alpha_d, \, s)=r_j\alpha_j\}$, where $j\in I$. We use part 1 of Lemma \ref{oblast}. In the vertices of $V$ with strictly positive coordinates the following equalities hold: $r_1\alpha_1=\dots = r_d\alpha_d$, where $\alpha_1+\dots +\alpha_d=s=1$ or $\alpha_1+\dots +\alpha_d=s=q/2$. These equalities give the points $\xi_1$ and $\xi_4$. The edges that come out from $\xi_1$ are given by
$$
\{(\alpha_1, \, \dots, \, \alpha_d, \, s):\; r_1\alpha_1=\dots = r_d\alpha_d, \; \alpha_1+\dots +\alpha_d=s\in [1, \, q/2]\}
$$
or by \eqref{l_k_def} with $k\ne j$ (see part 1 of Lemma \ref{oblast}); i.e., these are $\hat l_{1,4}$ and $l_k$, $k\ne j$. 
From \eqref{psi_j_def_jk}, \eqref{hxi1xi4} it follows that
\begin{align}
\label{hxi1xi4xi1k}
\tilde h(\xi_1)< \tilde h(\xi_4), \quad \tilde h(\xi_1)< \tilde h(\xi_{1,k}), \quad k\ne j.
\end{align}
Hence the minimum of the function $\tilde h$ on $V$ can be attained only at $\xi_1$; it is equal to 
\begin{align}
\label{1minvh}
\min_V \tilde h =\tilde h(\xi_1)= \psi_j(\xi_1)\stackrel{\eqref{hxi1xi4}}{=}\frac{\langle \overline{r}\rangle}{d}.
\end{align}

\item Let 
\begin{align}
\label{v_case2}
V = \Bigl\{(\alpha_1, \, \dots, \, \alpha_d, \, s)\in \tilde D:\; \tilde h(\alpha_1, \, \dots, \, \alpha_d, \, s)=r_j\alpha_j-\frac 12\cdot \frac{1/p_j-1/q}{1/2-1/q}(s-1)\Bigr\},
\end{align}
where $j\in J$. By part 2 of Lemma \ref{oblast},
the vertices of $V$ with positive coordinates satisfy the equations
$$
r_1\alpha_1-\frac 12\cdot \frac{1/p_1-1/q}{1/2-1/q}(s-1) = \dots = r_d\alpha_d-\frac 12\cdot \frac{1/p_d-1/q}{1/2-1/q}(s-1),
$$
$$
\alpha_1+\dots+\alpha_d=s, \quad s=1 \; \text{or }s = q/2.
$$
For $s=1$ we get $\xi_1$, and for $s=q/2$, we get $\xi_3$.

The edges that come out from $\xi_1$ are either $l_k$ ($k=1, \, \dots, \, d$, $k\ne j$; see \eqref{l_k_def} and part 2 of Lemma \ref{oblast}), or the segment $\hat l_{1,3}$ that joins $\xi_1$ and $\xi_3$. By \eqref{v_case2}, for $s=1$ we have $\tilde h(\alpha_1, \, \dots, \, \alpha_d, \, 1) = r_j\alpha_j$; hence from \eqref{psi_j_def_jk}, \eqref{hxi1xi4} we get $\tilde h(\xi_1)<\tilde h(\xi_{1,k})$, $k\ne j$. Therefore, if $\tilde h(\xi_1)\le \tilde h(\xi_3)$, then $\xi_1$ is the minimum point of $\tilde h$ on $V$, and $\tilde h (\xi_1) = \frac{\langle \overline{r}\rangle}{d}$.

The edges that come out from $\xi_3$ are either $\hat l_{1,3}$ or the segments $\tilde l_k$ ($k=1, \, \dots, \, d$, $k\ne j$; see \eqref{til_l_k_def}). Let $\xi_{3,k}\ne \xi_3$ be the endpoint of the edge $\tilde l_k$. From \eqref{lk_vk}, \eqref{psi_j_def_jk}, \eqref{hxi1xi4} and \eqref{v_case2} it follows that
\begin{align}
\label{hxi2lhxi2k}
\tilde h(\xi_3) < \tilde h(\xi_{3,k}).
\end{align}
Hence, if $\tilde h(\xi_1)\ge \tilde h(\xi_3)$, then $\xi_3$ is the minimum point of $\tilde h$ on $V$, and $\tilde h(\xi_3) = \frac q2 \Bigl( \frac{\langle \overline{r}\rangle}{d} + \frac 1q -\frac{\langle \overline{r}\rangle}{\langle \overline{p}\circ \overline{r}\rangle}\Bigr)$.

Therefore, 
\begin{align}
\label{min_v_case2} \min _V \tilde h = \min \{\tilde h(\xi_1), \, \tilde h(\xi_3)\} = \min \left \{\frac{\langle \overline{r}\rangle}{d}, \, \frac q2 \Bigl( \frac{\langle \overline{r}\rangle}{d} + \frac 1q -\frac{\langle \overline{r}\rangle}{\langle \overline{p}\circ \overline{r}\rangle}\Bigr)\right\};
\end{align}
if, in addition, $\frac{\langle \overline{r}\rangle}{d} \ne \frac q2 \Bigl( \frac{\langle \overline{r}\rangle}{d} + \frac 1q -\frac{\langle \overline{r}\rangle}{\langle \overline{p}\circ \overline{r}\rangle}\Bigr)$, the minimum point on $V$ is unique.

\item Let 
\begin{align}
\label{v_case3}
V = \{(\alpha_1, \, \dots, \, \alpha_d, \, s)\in \tilde D:\; \tilde h(\alpha_1, \, \dots, \, \alpha_d, \, s)=r_j\alpha_j+1/2-s/p_j\}, 
\end{align}
where $j\in K$. By part 3 of Lemma \ref{oblast}, the vertices  of $V$ with positive coordinates satisfy the equations
$$
r_1\alpha_1-\frac{s}{p_1} = \dots = r_d\alpha_d-\frac{s}{p_d},
$$
$$
\alpha_1+\dots+\alpha_d=s, \quad s=1\;\text{or }s=q/2.
$$
For $s=1$ we get $\xi_2$, and for $s=q/2$, we obtain $\xi_3$.

The edges that come out from $\xi_3$ are either $\tilde l_k$ ($k\ne j$; see \eqref{til_l_k_def} and part 3 of Lemma \ref{oblast}) or $\hat l_{2,3}$. The edges that come out from $\xi_2$ are either $\hat l_{2,3}$ or $\hat l_k$ ($k\ne j$; see \eqref{hat_l_k_def}).

Let $\xi_{2,k}\ne \xi_2$ be the endpoint of the edge $\hat l_k$, and let $\xi_{3,k}\ne \xi_3$ be the endpoint of $\tilde l_k$. From \eqref{lk_vk}, \eqref{psi_j_def_jk}, \eqref{hxi1xi4} and \eqref{v_case3} it follows that $\tilde h(\xi_2)< \tilde h(\xi_{2,k})$, $\tilde h(\xi_3)< \tilde h(\xi_{3,k})$.

Hence
\begin{align}
\label{h_v_min_case3}
\begin{array}{c}
\min _V\tilde h = \min\{\tilde h(\xi_2), \, \tilde h(\xi_3)\} = \\ = \min \left\{ \frac{\langle \overline{r}\rangle}{d} + \frac 12 -\frac{\langle \overline{r}\rangle}{\langle \overline{p}\circ \overline{r}\rangle}, \, \frac q2 \Bigl( \frac{\langle \overline{r}\rangle}{d} + \frac 1q -\frac{\langle \overline{r}\rangle}{\langle \overline{p}\circ \overline{r}\rangle}\Bigr)\right\};
\end{array}
\end{align}
if, in addition, $\frac{\langle \overline{r}\rangle}{d} + \frac 12 -\frac{\langle \overline{r}\rangle}{\langle \overline{p}\circ \overline{r}\rangle}\ne \frac q2 \Bigl( \frac{\langle \overline{r}\rangle}{d} + \frac 1q -\frac{\langle \overline{r}\rangle}{\langle \overline{p}\circ \overline{r}\rangle}\Bigr)$, then minimum of $\tilde h$ on $V$ is attained only at one point.

\item Let
\begin{align}
\label{v_case4} V = \{(\alpha_1, \, \dots, \, \alpha_d, \, s)\in \tilde D:\; \tilde h(\alpha_1, \, \dots, \, \alpha_d, \, s)=(1-\lambda_{i,j})r_i\alpha_i + \lambda_{i,j}r_j\alpha_j\},
\end{align}
where $i\in I$, $j\in J\cup K$. By part 4 of Lemma \ref{oblast}, the vertices of $V$ with positive coordinates satisfy the equations
\begin{align}
\label{vert1}
\frac{\alpha_ir_i-\alpha_jr_j}{1/p_i-1/p_j} = \frac{\alpha_ir_i-\alpha_kr_k}{1/p_i-1/p_k}, \quad k\in J\cup K,
\end{align}
\begin{align}
\label{vert2}
\frac{\alpha_ir_i-\alpha_jr_j}{1/p_i-1/p_j} = \frac{\alpha_kr_k-\alpha_jr_j}{1/p_k-1/p_j}, \quad k\in I,
\end{align}
$$
\alpha_1+\dots +\alpha_d=s, \quad s = 1 \;\text{or }s = q/2,
$$
$$
\alpha_ir_i - \alpha_jr_j=0 \;\text{or }\alpha_ir_i - \alpha_jr_j=\frac 12\cdot \frac{1/p_i-1/p_j}{1/2-1/q}(s-1).
$$

For $\alpha_ir_i - \alpha_jr_j=0$, we have 
$$\alpha_1r_1=\dots=\alpha_dr_d, \quad \alpha_1+\dots +\alpha_d=s, \quad s = 1 \;\text{or }s = q/2.$$
For $s=1$ we get $\xi_1$, and for $s=q/2$, we get $\xi_4$.

Let $\alpha_ir_i - \alpha_jr_j=\frac 12\cdot \frac{1/p_i-1/p_j}{1/2-1/q}(s-1)$. For $s=1$ it is equivalent to $\alpha_ir_i - \alpha_jr_j=0$; hence we again obtain the point $\xi_1$. For $s = q/2$ we have $\alpha_ir_i - \alpha_jr_j=\frac q2 (1/p_i-1/p_j)$; this together with \eqref{vert1}, \eqref{vert2}
yields
$$\alpha_1 - \frac{q}{2p_1}=\dots=\alpha_d-\frac{q}{2p_d},\quad \alpha_1+\dots+\alpha_d=s=q/2.$$ These equations define the point $\xi_3$.

The edges that come out from $\xi_1$ are either $l_k$ ($k\ne i$, $j$), or $\hat l_{1,3}$, or $\hat l_{1,4}$. At the edges $l_k$ and $\hat l_{1,4}$ the equality $r_i\alpha_i=r_j\alpha_j$ holds and the function $\tilde h$ is equal to $\alpha_ir_i$. This together with \eqref{psi_j_def_jk}, \eqref{hxi1xi4} yields that $\tilde h(\xi_1)< \tilde h(\xi_4)$, $\tilde h(\xi_1)< \tilde h(\xi_{1,k})$, $k\ne i, \, j$. Therefore, if $\tilde h(\xi_1)\le \tilde h(\xi_3)$, then $\min _{V} \tilde h = \tilde h(\xi_1) = \frac{\langle \overline{r} \rangle}{d}$. 

The edges that come out from $\xi_3$ are either $\tilde l_k$ ($k\ne i$, $j$), or $\hat l_{1,3}$, or $\hat l_{3,4}$. At the edges $\tilde l_k$ the equality $r_i\alpha_i-\frac{q}{2p_i}=r_j\alpha_j-\frac{q}{2p_j}$ holds; hence, at the edge $\tilde l_k$ the function $\tilde h$ is equal to $(1-\lambda_{i,j})r_i\alpha_i+\lambda_{i,j}r_j\alpha_j = r_i\alpha_i +\frac 12 -\frac{q}{2p_i}$. Consequently, by \eqref{lk_vk}, \eqref{psi_j_def_jk} and \eqref{hxi1xi4}, $\tilde h(\xi_3)<\tilde h(\xi_{3,k})$, $k\ne i$, $j$. Hence, if $\tilde h(\xi_3)\le \tilde h(\xi_1)$, this together with $\tilde h(\xi_1) < \tilde h(\xi_4)$ implies that $\min _V \tilde h = \tilde h(\xi_3) = \frac q2 \Bigl( \frac{\langle \overline{r}\rangle}{d} + \frac 1q -\frac{\langle \overline{r}\rangle}{\langle \overline{p}\circ \overline{r}\rangle}\Bigr)$.

We get
\begin{align}
\label{min_v_case4} \min _V \tilde h = \min \{\tilde h(\xi_1), \, \tilde h(\xi_3)\} = \min \left \{\frac{\langle \overline{r}\rangle}{d}, \, \frac q2 \Bigl( \frac{\langle \overline{r}\rangle}{d} + \frac 1q -\frac{\langle \overline{r}\rangle}{\langle \overline{p}\circ \overline{r}\rangle}\Bigr)\right\};
\end{align}
if, in addition, $\frac{\langle \overline{r}\rangle}{d} \ne \frac q2 \Bigl( \frac{\langle \overline{r}\rangle}{d} + \frac 1q -\frac{\langle \overline{r}\rangle}{\langle \overline{p}\circ \overline{r}\rangle}\Bigr)$, the minimum on $V$ is attained only at one point.

\item Similarly we get that if
$$
V = \Bigl\{(\alpha_1, \, \dots, \, \alpha_d, \, s)\in \tilde D:\; \tilde h(\alpha_1, \, \dots, \, \alpha_d, \, s)=(1-\mu_{i,j})r_i\alpha_i + \mu_{i,j}r_j\alpha_j + \frac 12 -\frac s2\Bigr\},
$$
$i\in I\cup J$, $j\in K$, then the vertices of $V$ with positive coordinates are $\xi_1$, $\xi_2$, $\xi_3$ and
\begin{align}
\label{min_v_case5} \min_V \tilde h = \min \{h(\xi_1), \, h(\xi_2), \, h(\xi_3)\} = \min \{\theta_1, \, \theta_2, \, \theta_3\}
\end{align}
(see notation in the theorem); recall that, by the conditions of Theorem \ref{main1}, there is $j_*\in \{1, \, 2, \, 3\}$ such that $\theta_{j_*} = \min _{j\ne j_*} \theta_j$. Therefore, the minimum in \eqref{min_v_case5} is attained only at one point.
\end{enumerate}

So the set $\tilde D$ is divided into closed polyhedrons $V^{(k)}$, $1\le k\le k_0$; each of them is defined by conditions from cases 1--5 above.
\begin{itemize}
\item If $I=\{1, \, \dots, \, d\}$, we get only case 1, and $$\min _{\tilde D} \tilde h = \min _{1\le k\le k_0}\min _{V^{(k)}} \tilde h \stackrel{\eqref{1minvh}}{=} \frac{\langle \overline{r}\rangle}{d};$$ here we do not use  \eqref{dop_usl} in our arguments (see Remark \ref{rem1}).

\item If $I \ne \{1, \, \dots, \, d\}$ and $I\cup J = \{1, \, \dots, \, d\}$, we get only cases 1, 2 and 4. By \eqref{1minvh}, \eqref{min_v_case2}, \eqref{min_v_case4}, $\min _{\tilde D} \tilde h= \min _{1\le k\le k_0}\min _{V^{(k)}} \tilde h = \min \{\theta_1, \, \theta_3\}$, and the minimum on $\tilde D$ is attained only at one point (since $\theta_1\ne \theta_3$ by the assumptions of Theorem \ref{main1}).

\item If $K=\{1, \, \dots, \, d\}$, we get only case 3. By \eqref{h_v_min_case3}, $\min _{\tilde D} \tilde h= \min _{1\le k\le k_0}\min _{V^{(k)}} \tilde h = \min \{\theta_2, \, \theta_3\}$; the minimum on $\tilde D$ is attained only at one point, since, by the conditions of Theorem \ref{main1}, $\theta_2\ne \theta_3$.

\item There remains the case $K\ne \varnothing$, $I\cup J \ne \varnothing$. By the assumptions of Theorem \ref{main1}, there is $j_*$ such that $\theta_{j_*} < \min _{j\ne j_*} \theta_j$. If $V^{(k)}$ contains $\xi_{j_*}$, then, by \eqref{1minvh}, \eqref{min_v_case2}, \eqref{h_v_min_case3}, \eqref{min_v_case4}, \eqref{min_v_case5}, we have $\min _{V^{(k)}} \tilde h = \theta_{j_*}$, and the minimum on $V^{(k)}$ is attained only at $\xi_{j_*}$. If $V^{(k)}$ does not contain $\xi_{j_*}$, we have $\min _{V^{(k)}} \tilde h > \theta_{j_*}$. Hence $\min _{\tilde D} \tilde h= \theta_{j_*}$ and the minimum point is unique.
\end{itemize}

\vskip 0.3cm

{\bf Proof of Theorem \ref{main1} in the general case.}

Now we consider the general case, when $p_i$ may be equal to $2$ or $q$. We define $\overline{p}^N = (p_1^N, \, \dots, \, p_d^N)$ as follows. If $p_j\notin \{2, \, q\}$, we set $p_j^N=p_j$. If $p_j=q$, we set $p_j^N = q + \frac 1N$, and if $p_j=2$, we set $p_j^N = 2 \pm\frac 1N$ (the sign is the same for all $j$; if $K=\{1, \, \dots, \, d\}$, we take ``$-$''; otherwise, we take ``$+$''). For large $N$ we have $p_j^N\notin \{2, \, q\}$, $1\le j\le d$. The function $\tilde h^N$ is defined by the same formula as $\tilde h$, replacing $p_j$ by $p_j^N$. Then $\tilde h^N$ converges to $\tilde h$ uniformly on $\tilde D$. Condition \eqref{dop_usl} for $\overline{p}^N$ holds for large $N$ if it holds for $\overline{p}$.

Let $\xi_t$ ($1\le t\le 4$) be defined by \eqref{xi_k_def14}, \eqref{aj12}, \eqref{aj34}. The set $T\subset \{1, \, 2, \, 3\}$ is defined as follows: if $I = \{1, \, \dots, \, d\}$, then $T=\{1\}$; if $I \ne \{1, \, \dots, \, d\}$ and $I\cup J = \{1, \, \dots, \, d\}\ne K$, then $T=\{1, \, 3\}$; if $K = \{1, \, \dots, \, d\}$, then $T=\{2, \, 3\}$; in the other cases $T = \{1, \, 2, \, 3\}$. Notice that all points $\xi_t$, $t\in T$, are different.

We show that $\min _{\tilde D} \tilde h = \min _{t\in T} \tilde h(\xi_t)$. Indeed, we define $\xi_t^N$ and $T_N$ similarly as $\xi_t$ and $T$, replacing $\overline{p}$ by $\overline{p}^N$. Then $\xi_t^N\underset{N\to \infty}{\to} \xi_t$ and, for large $N$, we have $T_N=T$ (in what follows, we consider only such $N$). We have already proved that $\min_{\tilde D} \tilde h^N = \min _{t\in T}\tilde h^N(\xi_t^N)$. There are $t_*\in T$ and a subsequence $\{N_m\}_{m\in \N}$ such that $\min _{t\in T}\tilde h^{N_m}(\xi_t^{N_m}) = \tilde h^{N_m}(\xi_{t_*}^{N_m})$. Since $\tilde h^N$ converges to $\tilde h$ uniformly on $\tilde D$ and $\xi_t^N\underset{N\to \infty}{\to} \xi_t$, we have $\min _{\tilde D} \tilde h = \tilde h(\xi_{t_*})$. The explicit formula for $\tilde h(\xi_{t_*})$ also follows from formulas for $\tilde h^{N_m}(\xi_{t_*}^{N_m})$; see \eqref{1minvh}, \eqref{min_v_case2}, \eqref{h_v_min_case3}, \eqref{min_v_case4}, \eqref{min_v_case5}.

Now we prove that $\xi_{t_*}$ is the unique minimum point of $\tilde h$. To this end, it siffices to check that there is a number $c = c(\overline{p}, \, q, \, \overline{r}, \, d)>0$ such that for large $m\in \N$
\begin{align}
\label{til_h_n_est_low} \tilde h^{N_m}(\xi) -\tilde h^{N_m}(\xi^{N_m}_{t_*}) \ge c|\xi - \xi^{N_m}_{t_*}|, \quad \xi \in \tilde D
\end{align}
(here $|\cdot|$ is the Euclidean norm on $\R^{d+1}$); this implies that $\tilde h(\xi)-\tilde h(\xi_{t_*})\ge c|\xi-\xi_{t_*}|$, $\xi\in \tilde D$.

Now we prove \eqref{til_h_n_est_low}. We again consider a polyhedron $V=V(m)$, which contains $\xi_{t_*}^{N_m}$, such that $\tilde h^{N_m}|_{V(m)}$ is affine (see cases 1--5 above). It suffices to prove that \eqref{til_h_n_est_low} holds for the points $\xi$ from the edges that come out from $\xi_{t_*}^{N_m}$. Indeed, by the assumptions of Theorem \ref{main1}, $\tilde h(\xi_{t_*}) < \tilde h(\xi_t)$, $t\in T\backslash \{t_*\}$. Since $\tilde h^N$ uniformly converges to $\tilde h$ on $\tilde D$ and $\xi_t^N$ converges to $\xi_t$, this yields that for large $m$ we have $\tilde h^{N_m}(\xi_t^{N_m}) - \tilde h^{N_m}(\xi_{t_*}^{N_m}) \underset{\overline{p}, \, q, \, \overline{r}, \, d}{\gtrsim} |\xi_t^{N_m} - \xi_{t_*}^{N_m}|$. Hence \eqref{til_h_n_est_low} holds at the edges joining $\xi_{t_*}^{N_m}$ and $\xi^{N_m}_t$, $t\in T\backslash \{t_*\}$. Let the edge join $\xi_{t_*}^{N_m}$ and $\xi_4^{N_m}$ (then $\xi_1^{N_m} \in V$; see cases 1 and 4), and $\tilde h^{N_m}(\xi_4^{N_m}) = \frac q2 \tilde h^{N_m}(\xi_1^{N_m}) = \frac q2 \cdot \frac{\langle \overline{r} \rangle}{d}$. Therefore, \eqref{til_h_n_est_low} also holds at this edge. Finally, the edge that comes out from $\xi^{N_m}_{t_*}$ can coincide with $l_k^m$, $\tilde l_k^m$ or $\hat l_k^m$ (these segments are given by formulas similar to \eqref{l_k_def}, \eqref{til_l_k_def} and \eqref{hat_l_k_def}, where $\overline{p}$ is replaced by $\overline{p}^{N_m}$). It was proved that at $l_k^m$, $\tilde l_k^m$ or $\hat l_k^m$ the function $\tilde h^{N_m}$ has the form $\alpha_jr_j + {\rm const}$; the number $s$ at these edges is equal to $1$ or $q/2$. Taking into account \eqref{lk_vk}, \eqref{psi_j_def_jk}, \eqref{hxi1xi4}, we get that \eqref{til_h_n_est_low} holds on $l_k^m$, $\tilde l_k^m$ on $\hat l_k^m$ that come out from $\xi^{N_m}_{t_*}$. This completes the proof.

\section{Proof of Theorems \ref{not_emb} and \ref{main2}}

\renewcommand{\proofname}{\bf Proof of Theorem \ref{not_emb}}

First we prove that if $\frac{\langle \overline{r}\rangle}{d} + \frac 1q - \frac{\langle \overline{r}\rangle}{\langle \overline{r}\circ \overline{p}\rangle}\le 0$, then $W^{\overline{r}}_{\overline{p}}(\mathbb{T}^d)$ is not compactly embedded into $L_q(\mathbb{T}^d)$. It can be checked by induction on $d$. For $d=1$, this is a well-known result.

Suppose that for $d-1$ the assertion is proved.

Let $\frac{\langle \overline{r}\rangle}{d} + \frac 1q - \frac{\langle \overline{r}\rangle}{\langle \overline{r}\circ \overline{p}\rangle}\le 0$. Then there is $j\in \{1, \, \dots, \, d\}$ such that $p_j<q$.

First we suppose that \eqref{dop_usl} holds. Taking into account that $p_j<q$ for some $j$, we get that $\min _D h\le \frac{\langle \overline{r}\rangle}{d} + \frac 1q - \frac{\langle \overline{r}\rangle}{\langle \overline{r}\circ \overline{p}\rangle}\le 0$ for $q\le 2$, $$\min _{\tilde D} \tilde h \le \frac q2 \left( \frac{\langle \overline{r}\rangle}{d} + \frac 1q - \frac{\langle \overline{r}\rangle}{\langle \overline{r}\circ \overline{p}\rangle}\right)\le 0$$ for $q>2$ (see Theorems \ref{main1}, \ref{main}). Hence $d_n(W^{\overline{r}}_{\overline{p}}(\mathbb{T}^d), \, L_q(\mathbb{T}^d)) \underset{\overline{r}, d, \overline{p}, q}{\gtrsim} 1$ (see Remark \ref{h_otr}) and the embedding is not compact.

Suppose that \eqref{dop_usl} fails; i.e., there is $j\in \{1, \, \dots, \, d\}$ such that 
\begin{align}
\label{ne_cond}
\sum \limits _{i=1}^d \frac{1}{r_i} \left(\frac{1}{p_i} - \frac{1}{p_j}\right) \ge 1.
\end{align}
We denote $\overline{r}_j =(r_1, \, \dots, \, r_{j-1}, \, r_{j+1}, \, \dots, \, r_d)$, $\overline{p}_j =(p_1, \, \dots, \, p_{j-1}, \, p_{j+1}, \, \dots, \, p_d)$. Then \eqref{ne_cond} is equivalent to 
\begin{align}
\label{ovr_r_j_d1}
\frac{\langle \overline{r}_j\rangle}{d-1} + \frac{1}{p_j} - \frac{\langle \overline{r}_j\rangle}{\langle \overline{r}_j\circ \overline{p}_j\rangle}\le 0.
\end{align}

If $p_j\le q$, then 
\begin{align}
\label{ovr_r_j_d1q}
\frac{\langle \overline{r}_j\rangle}{d-1} + \frac{1}{q} - \frac{\langle \overline{r}_j\rangle}{\langle \overline{r}_j\circ \overline{p}_j\rangle}\le 0
\end{align}
and $W^{\overline{r}_j}_{\overline{p}_j}(\mathbb{T}^{d-1})$ in not embedded compactly into $L_q(\mathbb{T}^{d-1})$ by induction assumption. This implies that $W^{\overline{r}}_{\overline{p}}(\mathbb{T}^{d})$ in not embedded compactly into $L_q(\mathbb{T}^{d})$.

Let $p_j>q$. We set $\overline{p}^*=(p_1, \, \dots, \, p_{j-1}, \, q, \, p_{j+1}, \, \dots, \, p_d)$. Then $\frac{\langle \overline{r}\rangle}{d} + \frac 1q - \frac{\langle \overline{r}\rangle}{\langle \overline{r}\circ \overline{p}^*\rangle}<0$ and, hence, $W^{\overline{r}}_{\overline{p}^*}(\mathbb{T}^{d})$ is not bounded in $L_q(\mathbb{T}^{d})$ \cite[Theorem 1]{galeev_emb}. On the other hand, $$W^{\overline{r}}_{\overline{p}^*}(\mathbb{T}^{d}) \subset\{f\in \mathaccent'27{\cal S}'(\mathbb{T}^d):\; \|\partial _j^{r_j}f\|_{L_{q}(\mathbb{T}^d)}\le 1\};$$
the right-hand side is bounded in $L_q(\mathbb{T}^{d})$. We get a contradiction. Hence the case $p_j>q$ is impossible.
\begin{proof}
Let $j\in \{1, \, \dots, \, d\}$ be such that \eqref{ne_cond} holds; it is equivalent to \eqref{ovr_r_j_d1}. By assumptions of Theorem \ref{not_emb}, $p_j\le q$. Hence \eqref{ovr_r_j_d1q} holds. It was proved that $W^{\overline{r}_j}_{\overline{p}_j}(\mathbb{T}^{d-1})$ is not embedded compactly into $L_q(\mathbb{T}^{d-1})$; therefore, $W^{\overline{r}}_{\overline{p}}(\mathbb{T}^{d})$ is not embedded compactly into $L_q(\mathbb{T}^{d})$.
\end{proof}

\renewcommand{\proofname}{\bf Proof of Theorem \ref{main2}}

\begin{proof}
We apply Theorem \ref{main} and write the functions $h$ for $q\le 2$ and $\tilde h$ for $q>2$.

Let $q\le 2$. Then, by \eqref{r2lp2p1} and \eqref{h_def_00},
$$
h(\alpha_1, \, \alpha_2) = \begin{cases} r_1\alpha_1 & \text{for } r_1\alpha_1-r_2\alpha_2\ge 0, \\ (1-\lambda)r_1\alpha_1+\lambda r_2\alpha_2 & \text{for } r_1\alpha_1-r_2\alpha_2\le 0\end{cases}
$$
(the case $h(\alpha_1, \, \alpha_2)= r_2\alpha_2 +1/q-1/p_2 > (1-\lambda)r_1\alpha_1+\lambda r_2\alpha_2$ is possible only for $r_2\alpha_2> r_1\alpha_1+1/p_2-1/p_1$, which contradicts with \eqref{r2lp2p1}). Since $\frac{\langle \overline{r}\rangle}{2}\ne \lambda r_2$, the minimum of $h$ is attained at one of the following points: $\Bigl(\frac{1/r_1}{1/r_1+1/r_2}, \, \frac{1/r_2}{1/r_1+1/r_2}\Bigr)$ or $(0, \, 1)$. This implies \eqref{dn_wrp_r2}.

Let $q>2$. Notice that $\hat s\in [1, \, q/2]$.

For $p_2\ge 2$, by Lemma \ref{oblast}
$$
\tilde h(\alpha_1, \, \alpha_2, \, s) = \begin{cases} r_1\alpha_1 & \text{for } r_1\alpha_1-r_2\alpha_2\ge 0, \\ (1-\lambda)r_1\alpha_1+\lambda r_2\alpha_2 & \text{for } \frac12\cdot \frac{1/p_1-1/p_2}{1/2-1/q}(s-1)\le r_1\alpha_1-r_2\alpha_2\le 0, \\
r_2\alpha_2-\frac 12\cdot \frac{1/p_2-1/q}{1/2-1/q}(s-1) &  \text{for } r_1\alpha_1-r_2\alpha_2\le \frac12\cdot \frac{1/p_1-1/p_2}{1/2-1/q}(s-1).\end{cases}
$$
Since $\theta_1\ne \theta_2$, the minimum of this function is attained at one of the following points: $\Bigl(\frac{1/r_1}{1/r_1+1/r_2}, \, \frac{1/r_2}{1/r_1+1/r_2}, \, 1\Bigr)$ or $(0, \, \hat s, \, \hat s)$.

For $p_2< 2$, by Lemma \ref{oblast}
$$
\tilde h(\alpha_1, \, \alpha_2, \, s) = \begin{cases} r_1\alpha_1 & \text{for } r_1\alpha_1-r_2\alpha_2\ge 0, \\ (1-\lambda)r_1\alpha_1+\lambda r_2\alpha_2 & \text{for } \frac12\cdot \frac{1/p_1-1/p_2}{1/2-1/q}(s-1)\le r_1\alpha_1-r_2\alpha_2\le 0, \\
(1-\mu)r_1\alpha_1+\mu r_2\alpha_2-\frac 12 (s-1) & \text{for } r_1\alpha_1-r_2\alpha_2\le \frac12\cdot \frac{1/p_1-1/p_2}{1/2-1/q}(s-1)\end{cases}
$$
(the case $\tilde h(\alpha_1, \, \alpha_2, \, s) = r_2\alpha_2 + \frac 12 - \frac{s}{p_2}>(1-\mu)r_1\alpha_1+\mu r_2\alpha_2-\frac 12 (s-1)$ is impossible by \eqref{r2lp2p1}).

By \eqref{theta_j_st_l_min}, the minimum of the function $\tilde h$ is attained at one of the following points: $(0, \, \hat s, \, \hat s)$, $(0,\, 1, \, 1)$ and $\Bigl(\frac{1/r_1}{1/r_1+1/r_2}, \, \frac{1/r_2}{1/r_1+1/r_2}, \, 1\Bigr)$. 

This implies the estimates in part 2 of Theorem \ref{main2}.
\end{proof}

\begin{Biblio}
\bibitem{galeev_emb} E.M. Galeev, ``Approximation by Fourier sums of classes of functions with several bounded derivatives'', {\it Math. Notes}, {\bf 23}:2 (1978),  109--117.

\bibitem{galeev1} E.M.~Galeev, ``The Kolmogorov diameter of the intersection of classes of periodic
functions and of finite-dimensional sets'', {\it Math. Notes},
{\bf 29}:5 (1981), 382--388.

\bibitem{galeev2} E.M. Galeev,  ``Kolmogorov widths of classes of periodic functions of one and several variables'', {\it Math. USSR-Izv.},  {\bf 36}:2 (1991),  435--448.

\bibitem{algervik} R. Algervik, {\it Embedding Theorems
for Mixed Norm Spaces and Applications}. Dissertation. Karlstad University Studies, 2010:16.

\bibitem{kolyada} V. I. Kolyada, ``Embeddings of fractional Sobolev spaces and estimates of Fourier
transforms'', {\it Sbornik: Mathematics}, {\bf 192}:7 (2001),  979--1000.

\bibitem{oleinik_vl} V. L. Oleinik, ``Estimates for the $n$-widths of compact sets of differentiate functions in spaces with weight functions'', {\it Journal of Soviet Mathematics}, {\bf 10}:2 (1978),  286--298.

\bibitem{besov_iljin_nik} O. V. Besov, V. P. Il'in, S. M. Nikol'skii, {\it Integral representations of functions and imbedding theorems}, Nauka, Moscow, 1975 (Russian). [V. I, II, V. H. Winston \& Sons, Washington, D.C.; Halsted Press, New York--Toronto, Ont.--London, 1978, 1979.]

\bibitem{besov_littlewood} O. V. Besov, ``The Littlewood--Paley theorem for a mixed norm'', {\it Proc. Steklov Inst. Math.}, {\bf 170}, 1987, 33--38.

\bibitem{akishev} G. A. Akishev, ``Estimates for Kolmogorov widths of the Nikol'skii--Besov--Amanov classes in the Lorentz space'', {\it Proc. Steklov Inst. Math. (Suppl.)}, {\bf 296}, suppl. 1 (2017),  1--12.

\bibitem{akishev1} G. A. Akishev, ``On estimates of the order of the best $M$-term approximations of functions of several variables in the anisotropic Lorentz--Zygmund space'', {\it Izvestiya of Saratov University. Mathematics. Mechanics. Informatics}, {\bf 23}:2 (2023),  142--156.

\bibitem{akishev2} G. A. Akishev, ``On estimates for orders of best M-term approximations of multivariate functions in anisotropic Lorentz--Karamata spaces'', {\it Ufa Math. J.}, {\bf 15}:1 (2023),  1--20.

\bibitem{pietsch1} A. Pietsch, ``$s$-numbers of operators in Banach space'', {\it Studia Math.},
{\bf 51} (1974), 201--223.

\bibitem{stesin} M.I. Stesin, ``Aleksandrov diameters of finite-dimensional sets
and of classes of smooth functions'', {\it Dokl. Akad. Nauk SSSR},
{\bf 220}:6 (1975), 1278--1281 [Soviet Math. Dokl.].

\bibitem{bib_kashin} B.S. Kashin, ``The widths of certain finite-dimensional
sets and classes of smooth functions'', {\it Math. USSR-Izv.},
{\bf 11}:2 (1977), 317--333.

\bibitem{gluskin1} E.D. Gluskin, ``On some finite-dimensional problems of the theory of diameters'', {\it Vestn. Leningr. Univ.}, {\bf 13}:3 (1981), 5--10 (in Russian).

\bibitem{bib_gluskin} E.D. Gluskin, ``Norms of random matrices and diameters
of finite-dimensional sets'', {\it Math. USSR-Sb.}, {\bf 48}:1
(1984), 173--182.

\bibitem{garn_glus} A.Yu. Garnaev and E.D. Gluskin, ``On widths of the Euclidean ball'', {\it Dokl.Akad. Nauk SSSR}, {bf 277}:5 (1984), 1048--1052 [Sov. Math. Dokl. 30 (1984), 200--204]

\bibitem{itogi_nt} V.M. Tikhomirov, ``Theory of approximations''. In: {\it Current problems in
mathematics. Fundamental directions.} vol. 14. ({\it Itogi Nauki i
Tekhniki}) (Akad. Nauk SSSR, Vsesoyuz. Inst. Nauchn. i Tekhn.
Inform., Moscow, 1987), pp. 103--260 [Encycl. Math. Sci. vol. 14,
1990, pp. 93--243].

\bibitem{kniga_pinkusa} A. Pinkus, {\it $n$-widths
in approximation theory.} Berlin: Springer, 1985.

\bibitem{galeev85} E.M. Galeev, ``Kolmogorov widths in the space $\widetilde{L}_q$ of the classes $\widetilde{W}_p^{\overline{\alpha}}$ and $\widetilde{H}_p^{\overline{\alpha}}$ of periodic functions of several variables'', {\it Math. USSR-Izv.}, {\bf 27}:2 (1986), 219--237.

\bibitem{galeev87} E.M. Galeev, ``Estimates for widths, in the sense of Kolmogorov, of classes of periodic functions of several variables with small-order smoothness'', {\it Vestnik Moskov. Univ. Ser. I Mat. Mekh.}, 1987, {\bf 1},  26--30.

\bibitem{galeev4} E.M. Galeev,  ``Widths of functional classes and finite-dimensional sets'', {\it Vladikavkaz. Mat. Zh.}, {\bf 13}:2 (2011), 3--14.

\bibitem{vas_sob} A. A. Vasil'eva, ``Kolmogorov widths of an intersection of a finite family of Sobolev classes'', {\it Izv. Math.}, {\bf 88}:1 (2024), 18–42.

\bibitem{nikolski_sm} S. M. Nikol'skii, {\it Priblizhenie funktsiĭ mnogikh peremennykh i teoremy vlozheniya} (Russian) [Approximation of functions of several variables and imbedding theorems]. Izdat. ``Nauka'', Moscow, 1969. 

\bibitem{zigmund} A. Zygmund, {\it Trigonometric series}. Cambridge Univ. Press, 1959.

\bibitem{vas_ball_inters} A. A. Vasil'eva, ``Kolmogorov widths of intersections of finite-dimensional balls'', {\it J. Compl.}, {\bf 72} (2022), article 101649.

\bibitem{bib_glus_3} E. D. Gluskin, ``Intersections of a cube and a octahedron are badly approximated by subspaces of small dimension''. \textit{Priblizhenie funktsii spetsial'nymi klassami operatorov} [Approximation of functions by special class of operators] (collection of scientific works), Vologda, Vologda State Ped. Univ. Publ., 1987. P. 35--41.

\bibitem{mal_rjut} Yu.V. Malykhin, K. S. Ryutin, ``The Product of Octahedra is Badly Approximated in the $l_{2,1}$-Metric'', {\it Math. Notes}, {\bf 101}:1 (2017), 94--99.

\end{Biblio}

\end{document}